\def \1{\mathds{1}}
\def \a{\mathfrak a}
\def \al{\alpha}
\def \b{\mathfrak b}
\def \be{\beta}
\def \bs{\backslash}
\def \C{{\mathbb C}}
\def \CB{\mathcal{B}}
\def \CC{\mathcal{C}}
\def \CE{\mathcal{E}}
\def \CS{\mathcal{S}}
\def \cusp{\mathrm{cusp}}
\def \D{{\mathbb D}}
\def \Eig{\operatorname{Eig}}
\def \End{\operatorname{End}}
\def \eps{\varepsilon}
\def \even{\mathrm{even}}
\def \Frob{\mathrm{Frob}}
\def \Ga{\Gamma}
\def \GL{\operatorname{GL}}
\def \ga{\gamma}
\def \H{{\mathbb H}}
\def \Id{{\rm Id}}
\def \Im{\operatorname{Im}}
\def \la{\lambda}
\def \mqed{\tag*\qedhere}
\def \N{{\mathbb N}}
\def \nequiv{\equiv\hspace{-11pt}/\hspace{5pt}}
\def \ol{\overline}
\def \om{\omega}
\def \Om{\Omega}
\def \op{\mathrm{op}}
\def \PSL{\operatorname{PSL}}
\def \R{{\mathbb R}}
\def \Re{\operatorname{Re}}
\def \sm{\smallsetminus}
\def \SO{\operatorname{SO}}
\def \ul{\underline}
\def \what{\widehat}
\def \Z{{\mathbb Z}}
\def \({\left(}
\def \){\right)}
\def \[{\Big\{}
\def \]{\Big\}}
\newcommand{\e}
[1]{\emph{#1}\index{#1}}
\newcommand{\mat}
[4]{\(\begin{matrix}#1 & #2 \\ #3 & #4\end{matrix}\)}
\newcommand{\norm}
[1]{\left\|#1\right\|}
\newcommand{\smat}
[4]{\(\begin{smallmatrix}#1 & #2 \\ #3 & #4\end{smallmatrix}\)}
\renewcommand{\sp}
[1]{\left\langle #1\right\rangle}
\newtheorem{theorem}{Theorem}[section]
\newtheorem{lemma}[theorem]{Lemma}
\newtheorem{proposition}[theorem]{Proposition}
\theoremstyle{definition}
\newtheorem{definition}[theorem]{Definition}
\begin{document}

\pagestyle{myheadings} \markright{Eisenstein series with non-unitary twists}

\title{Eisenstein series with non-unitary twists}
\author{Anton Deitmar \& Frank Monheim}
\date{}
\maketitle

{\bf Abstract:} It is shown that for a non-unitary twist of a Fuchsian group, which is unitary at the cusps, Eisenstein series converge in some half-plane. It is shown that invariant integral operators provide a spectral decomposition of the space of cusp forms and that Eisenstein series admit a meromorphic continuation.

{\bf Keywords:} Eisenstein series, non-unitary representation, cusp-form

{\bf MSC: 11F72}, 11F75, 43A80

$$ $$

\tableofcontents

\newpage
\section*{Introduction}

Werner M\"uller established in \cite{mueller} a trace formula with
non-unitary twists for compact locally symmetric spaces
$\Gamma\backslash G/K$. In the paper \cite{DM} the authors extend this to
compact quotients $\Gamma\backslash G$, for a Lie group or a totally
disconnected group $G$.
This paper is the first step towards a corresponding formula in the case of non-compact quotient $\Ga\bs G$.
In this paper we specialize to the case $G=\PSL_2(\R)$. We fix a non cocompact lattice $\Ga$ in $G$. For convenience we can switch to a subgroup of finite index and  thus assume $\Ga$ to be torsion-free.

First we set up the notion of a canonical Hilbert space. It turns out that the Hilbert structure is not canonical, though.
Using geometric estimates  on the word representations of group elements, it is shown that Eisenstein series actually do converge. For this, however, it is needed that the representation $\chi:\Ga\to\GL(V)$ be bounded in terms of the, say, Frobenius norm of the elements of the Fuchsian group $\Ga$.
Recall that a Fuchsian group with cusps, which is torsion-free, is a free group in finitely many generators \cite{Katok}. 
So in order to give a representation $\chi$ as above, one only needs to give one arbitrary matrix $\chi(\tau)$ for each generator $\tau$.
If the group has at least two cusps, the canonical set of generators contains a parabolic element, but if one has a parabolic generator, say $\tau=\smat 11\ 1$ and maps it to a semisimple matrix, then the norms $\norm{\chi(\tau^k))}$ will tend to infinity exponentially in $k$, while the Frobenius norm equals $\norm{\tau^k}_{\Frob}=\sqrt{2+k^2}$.
So in that case an estimate of the desired kind is impossible and consequentially, the corresponding Eisenstein series won't converge.
The solution is to insist that the representation be unitary on parabolic elements, which is a restriction which is used in the text.
Under this condition indeed we show convergence and analyticity of the Eisenstein series.
As a by-product we get that the space of cusp forms is stable under all invariant operators.

We thank Ksenia Fedosova for pointing out a mistake in an earlier version of this paper.

\section{Notation}

The group $G=\PSL_2(\R)$ acts transitively on the upper half plane $\H$ in $\C$ by linear fractionals.
Let $\Ga\subset G$ be a lattice, i.e., a discrete subgroup of finite covolume.
We will throughout assume that $\Ga\bs \H$ is non-compact, i.e., the group $\Ga$ has \e{cusps}.
Then $\Ga$ acts on $\H$ properly discontinuously.
The action extends to the boundary $\partial\H=\R\cup\{\infty\}$, and for each $w\in\ol\H=\H\cup\partial\H$ we write $\Ga_w$ for its stabilizer, i.e.,
$$
\Ga_w=\big\{ \ga\in\Ga: \ga w=w\big\}.
$$
A connected open set $F\subset\H$, whose boundary is a null-set, is called a \e{fundamental domain} for $G$ if there exists a set of representatives $R$ for $\Ga\bs\H$ such that
$$
F\subset R\subset \ol F.
$$
An example is the \e{Dirichlet domain} $D(z_0)$ to a point $z_0\in\H$ with trivial stabilizer $\Ga_{z_0}=\{1\}$:
$$
D(z_0)=\[ z\in\H: d(z,z_0)<d(z,\ga z_0)\text{ for every }1\ne \ga\in\Ga\].
$$
We will fix a domain $F$ of this type. It has finitely many geodesic sides and finitely many cusps.
For each cusp $\a\in\what\R=\R\cup\infty$ there exists an element $\sigma_\a\in G$ such that
\begin{itemize}
\item $\sigma_\a\infty=\a$,
\item $\sigma_\a^{-1}\Ga_\a\sigma_\a=\pm\mat 1\Z\ 1.$
\end{itemize}
In order to measure the location of a point $z$ in the Riemann surface $\Ga\bs \H$ with respect to the compact core of $\Ga\bs \H$ and the cuspidal ends, we introduce the \e{invariant height}
$$
y_\Ga(z)=\max_\a\max_{\ga\in\Ga}\(\Im(\sigma_\a^{-1}\ga z)\).
$$ 
We say that $z\in\H$ approaches the cusp $\a$ if $\Im(\sigma_\a^{-1}\ga z)\to\infty$.
For $Y>0$ let $P(Y)$ be the set of all $z=x+iy\in\H$ with $0<x< 1$ and $y\ge Y$.
For  large enough $Y$ the scaling matrix $\sigma_\a$ maps this strip injectively into $F$. Let $F_\a(Y)=\sigma_\a P(Y)$ denote the image. For large enough $Y$ we set
$$
F(Y)=F\sm\(\bigcup_\a F_\a(Y)\).
$$
Then $F(Y)$ is relatively compact in $\H$ and we have divided the fundamental domain $F$ into the central part $F(Y)$ and finitely many cuspidal zones
$$
F=F(Y)\cup\bigcup_\a F_\a(Y).
$$
  
  \begin{equation*}
\begin{tikzpicture}[scale=1]
  \tkzDefPoint(0,0){0}
  \tkzDefPoint(1,0){A}
  \tkzDefPoint(2,0){B}
  \tkzDefPoint(3,0){C}
  \tkzDefPoint(4,0){D}
  \tkzDrawLine(0,D)
  \tkzDrawArc[color = black](B,C)(A) 
  \tkzDrawArc[rotate, color = black](0,A)(110)
  \tkzDrawArc[rotate, color = black](D,C)(-110)
  \tkzDefPoint(110:1){E0}
  \tkzDefPoint(70:1){E}
  \tkzDefPoint[shift = {(2,0)}](110:1){F} 
  \tkzCircumCenter(A,E,F)\tkzGetPoint{I} 
  \tkzDefPoint[shift = {(2,0)}](70:1){G}
  \tkzDefPoint[shift = {(4,0)}](110:1){H}
  \tkzDefPoint[shift = {(4,0)}](70:1){H0}
  \tkzCircumCenter(C,G,H)\tkzGetPoint{J}
  
  \tkzDefPoint(110:1){K}
  \tkzDefPoint[shift = {(4,0)}](70:1){L}
  \tkzDrawArc(I,F)(E)
  \tkzDrawArc[style = dashed](I,E)(F)
  \tkzDrawArc(J,H)(G)
  \tkzDrawArc[style = dashed](J,G)(H)
  
  \tkzDefShiftPoint[E0](0,4){M}
  \tkzDefShiftPoint[H0](0,4){N}
  
  \tkzDrawLine[add = 0 and .2](E0,M)
  \tkzDrawLine[add = 0 and .2](H0,N)
  
  \tkzDefShiftPoint[M](-0.5,0){M0}
  \tkzDefShiftPoint[N](0.5,0){N0}
  \tkzDrawLine[add = 0 and 0, color = gray, line width = 0.4 pt](M,N)
  \tkzDrawLine[style = dashed, color = gray, add = 0.2 and 0, line width = 0.4 pt](M0,M)
  \tkzDrawLine[style = dashed, color = gray, add = 0.2 and 0, line width = 0.4 pt](N0,N)
  
  \tkzLabelPoint[below](A){$\mathfrak{a}$}
  \tkzLabelPoint[below](C){$\mathfrak{b}$}
  \tkzLabelPoint[below](2,3){$F(Y)$}
  \tkzLabelPoint[above](I){$F_\mathfrak{a}(Y)$}
  \tkzLabelPoint[above](J){$F_\mathfrak{b}(Y)$}
  \tkzLabelPoint[above](2,5){$F_\infty(Y)$}
\end{tikzpicture}
\end{equation*}

\begin{definition}
Let 
$$
\chi:\Ga\to\GL(V)
$$
be a finite-dimensional representation of $\Ga$ on a unitary space $V$.
We say that $\chi$ is \e{unitary at the cusps}, if for every cusp $\a$ and every $\ga\in\Ga_\a$ the endomorphism $\chi(\ga)$ of $V$ is unitary.

The orthogonal projection $V\to V^{\chi(\Ga_\a)}$ onto the space of $\Ga_\a$ fixed points will be denoted by $P_\a$.
In the case that $P_\a=0$ for every cusp we say that $\chi$ is \e{non-singular}.
\end{definition}

We choose a fundamental domain $F\subset\H$
and define
$$
L^2(F,\chi)=\left\{ f:\H\to V: \begin{array}{c}f\text{ is measurable}\\ f(\ga z)=\chi(\ga)f(z),\ {\ga\in\Ga,\ z\in\H}\\ \int_F\norm{f(z)}_V^2\,dz<\infty\end{array}\right\}
$$
modulo nullfunctions.
This Hilbert space depends on the choice of $F$, but only in a mild way.

\begin{definition}
Two fundamental domains $F_1$ and $F_2$ are said to be \e{equivalent} if, up to a set of measure zero, $F_2$ can be covered by finitely many $\Ga$-translates of $F_1$ and vice-versa.
\end{definition}

\begin{proposition}
If two fundamental domains $F_1$ and $F_2$ are equivalent, then $L^2(F_1,\chi)$ and $L^2(F_2,\chi)$ coincide as sets and the identity map 
$$
L^2(F_1,\chi)\to L^2(F_2,\chi)
$$
is a topological isomorphism.
\end{proposition}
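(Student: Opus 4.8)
The plan is to observe first that, as a set \emph{before} imposing any finiteness condition, $L^2(F,\chi)$ does not depend on $F$ at all: its elements are the measurable functions $f:\H\to V$ satisfying the equivariance relation $f(\ga z)=\chi(\ga)f(z)$, taken modulo null functions. Only the requirement $\int_F\norm{f(z)}_V^2\,dz<\infty$ and the resulting norm actually see the domain $F$. Hence everything reduces to comparing the two seminorms $\norm{f}_{F_1}$ and $\norm{f}_{F_2}$ on this common space of equivariant functions, and the proposition will follow once I show these two norms are equivalent.

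To compare them I would use the hypothesis directly. By equivalence there are finitely many $\ga_1,\dots,\ga_n\in\Ga$ with $F_2\subset\bigcup_{j=1}^n\ga_j F_1$ up to a null set. For an equivariant $f$, subadditivity of the integral together with the fact that a null set contributes nothing give
$$
\int_{F_2}\norm{f(z)}_V^2\,dz\ \le\ \sum_{j=1}^n\int_{\ga_j F_1}\norm{f(z)}_V^2\,dz.
$$
In each summand I substitute $z=\ga_j w$. The hyperbolic measure $dz$ is $G$-invariant, so $d(\ga_j w)=dw$, and the equivariance gives $f(\ga_j w)=\chi(\ga_j)f(w)$; therefore each integral equals $\int_{F_1}\norm{\chi(\ga_j)f(w)}_V^2\,dw$. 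Since $V$ is finite-dimensional and $\chi(\ga_j)\in\GL(V)$, the operator norm $\norm{\chi(\ga_j)}_\op$ is finite, whence $\norm{\chi(\ga_j)f(w)}_V\le\norm{\chi(\ga_j)}_\op\norm{f(w)}_V$. Summing, I obtain
$$
\norm{f}_{F_2}^2\ \le\ \Big(\sum_{j=1}^n\norm{\chi(\ga_j)}_\op^2\Big)\,\norm{f}_{F_1}^2,
$$
so that $C_{21}:=\sum_j\norm{\chi(\ga_j)}_\op^2$ is a finite constant independent of $f$.

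By the symmetry of the equivalence relation, the same argument with the roles of $F_1$ and $F_2$ reversed produces a finite constant $C_{12}$ with $\norm{f}_{F_1}^2\le C_{12}\,\norm{f}_{F_2}^2$. These two inequalities show at once that an equivariant $f$ is square-integrable over $F_1$ if and only if it is over $F_2$, so the two spaces coincide as sets; and they show that the identity map is bounded with bounded inverse, hence a topological isomorphism.

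As for difficulties, the argument is essentially bookkeeping, and the only points that genuinely need care are that the hyperbolic measure really is $G$-invariant, so that the substitution $z=\ga_j w$ is measure-preserving, and that the covering is \emph{finite}, since an infinite cover could make $\sum_j\norm{\chi(\ga_j)}_\op^2$ diverge. Note that no unitarity of $\chi$ is used anywhere: all that is required is that each $\chi(\ga_j)$ be an invertible endomorphism of a finite-dimensional space, and hence bounded with bounded inverse.
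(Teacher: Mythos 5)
Your proof is correct and follows essentially the same route as the paper: cover $F_2$ up to a null set by finitely many translates $\ga_jF_1$, change variables using the $G$-invariance of the hyperbolic measure together with the equivariance $f(\ga_j w)=\chi(\ga_j)f(w)$, bound by the operator norms $\norm{\chi(\ga_j)}$, and conclude by symmetry. The paper states this in one line; you have merely written out the intermediate steps.
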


\begin{proof}
Let $\ga_1,\dots,\ga_l\in\Ga$ such that $F_2\subset \(N\cup\bigcup_{j=1}^l\ga_jF_1\)$, where $N$ is a null-set. Then
$$
\int_{F_2}\norm{f(z)}^2\,dz\le\sum_{j=1}^l \norm{\chi(\ga_j)}^2\int_{F_1}\norm{f(z)}^2\,dz,
$$
where $\norm{\chi(\ga_j)}$ is the operator norm. By symmetry the claim follows.
\end{proof}

\begin{definition}
A fundamental domain $F$ is called \e{geometrically finite}, if it is bounded by finitely many geodesics.
A Dirichlet domain is an example.
\end{definition}

\begin{proposition}
Any two geometrically finite fundamental domains are equivalent.
\end{proposition}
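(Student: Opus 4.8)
The plan is to prove only that $F_2$ can be covered, up to a null set, by finitely many $\Ga$-translates of $F_1$; the reverse inclusion then follows by interchanging the roles of the two domains, so the defining condition of equivalence is verified in one stroke. Both domains being geometrically finite, I would fix $Y>0$ large and split $F_2$ along the invariant height into a central part $F_2\cap\{z:y_\Ga(z)\le Y\}$ and finitely many cuspidal ends, namely the components of $F_2\cap\{z:y_\Ga(z)>Y\}$, each approaching one cusp $\a$. Since $F_2$ is bounded by finitely many geodesics and has finite area, its only non-compact directions are the ideal vertices at the cusps, so the central part is relatively compact, while the set $\{y_\Ga\le Y\}$ has compact image in $\Ga\bs\H$. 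The two kinds of region are then handled separately, the guiding principle being that the tessellation $\{\ga\ol{F_1}:\ga\in\Ga\}$ is locally finite in $\H$, so that a finite subcover can only fail as one runs out towards the cusps.

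For the central part I would use that $y_\Ga$ is $\Ga$-invariant. Suppose $\ga F_1$ meets $F_2\cap\{y_\Ga\le Y\}$ in positive measure and pick an interior point $p$ of the overlap. Then $\ga^{-1}p\in F_1$ and, since $y_\Ga(\ga^{-1}p)=y_\Ga(p)\le Y$, the point $\ga^{-1}p$ lies in the relatively compact set $L_1=F_1\cap\{y_\Ga\le Y\}$; hence $p\in\ga\ol{L_1}$. Every such $\ga$ therefore satisfies $\ga\ol{L_1}\cap\ol{F_2\cap\{y_\Ga\le Y\}}\ne\emptyset$ with both sets compact, and proper discontinuity of the $\Ga$-action on $\H$ leaves only finitely many candidates $\ga$. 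These finitely many translates cover the central part up to measure zero, because the $\ga\ol{F_1}$ cover $\H$ and we have retained every $\ga$ contributing there.

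The cuspidal ends are where the genuine work lies, and I expect this to be the main obstacle. Fix a cusp $\a$ and conjugate by $\sigma_\a$, so that $\a$ becomes $\infty$ and $\sigma_\a^{-1}\Ga_\a\sigma_\a$ becomes the integer translations $z\mapsto z+n$. The first ingredient is the standard precise-invariance statement: for $Y$ large the horoball neighbourhood $\sigma_\a\{\Im z>Y\}$ is identified with its $\Ga$-neighbours only through the parabolic stabilizer $\Ga_\a$, i.e. the only $\ga\in\Ga$ with $\ga\,\sigma_\a\{\Im z>Y\}\cap\sigma_\a\{\Im z>Y\}\ne\emptyset$ lie in $\Ga_\a$. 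The second ingredient is that geometric finiteness forces the two boundary geodesics of $F_2$ meeting at the ideal vertex over $\a$ to be vertical half-lines, so that for $Y$ large enough the end of $F_2$ over $\a$ is, in $\sigma_\a$-coordinates, exactly a vertical strip $\{c<x<c+1,\ \Im z>Y\}$, of width equal to the translation length $1$; the same is true of $F_1$.

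Finally I would assemble these two ingredients. Because $\Ga$-translates of $F_1$ tile $\H$, their intersections with $\sigma_\a\{\Im z>Y\}$ cover it, and by precise invariance only $\Ga_\a$-translates of the end of $F_1$ over $\a$ are involved; in $\sigma_\a$-coordinates these are the integer shifts of a fixed width-one vertical strip. A width-one strip is covered by at most two integer shifts of another width-one strip, so the end of $F_2$ over $\a$ is covered by at most two $\Ga_\a\subset\Ga$ translates of $F_1$. Since there are finitely many cusps, combining this with the central estimate shows that $F_2$ is covered up to a null set by finitely many $\Ga$-translates of $F_1$, and by symmetry $F_1$ and $F_2$ are equivalent.
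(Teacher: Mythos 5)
Your proof is correct in outline but takes a genuinely different route from the paper's. The paper argues via the Borel--Serre bordification: it adjoins a copy of $\partial\H\sm\{\a\}$ at each cusp to form a space $\H_\Ga$ on which $\Ga$ still acts properly discontinuously, shows that a geometrically finite fundamental domain extends to one whose closure in $\H_\Ga$ is compact, and then obtains the finite covering from proper discontinuity in one stroke, with no case distinction between the thick part and the ends. You stay inside $\H$, split along the invariant height, and treat the cuspidal ends by hand via precise invariance of horoballs; this is the classical reduction-theory argument, more elementary in that it needs no new topology, but it requires the explicit strip bookkeeping that the compactification makes invisible. Two points in your cusp step should be tightened. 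First, a geometrically finite fundamental domain can have several ideal vertices lying in the same $\Ga$-orbit, so the end of $F_2$ over $\a$ is in general a finite union of vertical strips of total width one rather than a single strip; this is harmless but turns ``at most two translates'' into ``finitely many''. Second, precise invariance of the horoball by itself does not identify which translates $\ga\ol{F_1}$ can meet $\sigma_\a\{\Im z>Y\}$: for that you also need that $\ol{F_1}\cap\{y_\Ga>Y\}$ consists exactly of the cuspidal ends of $F_1$ and that high horoballs at distinct parabolic fixed points are disjoint, after which the relevant $\ga$ form finitely many cosets of cusp stabilizers and your counting argument goes through.
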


\begin{proof}
We consider the Borel-Serre compactification.
Let 
$$
\H_\Ga = \H\cup\bigcup_\a\(\partial\H\sm\{\a\}\).
$$
We write $B_\a$ for $\partial\H\sm\{\a\}$.
For the points of $B_\a$ we will write $b_\a(x)$ where $x\in \partial\H\sm\{\a\}$ to distinguish the points of different copies of $\partial\H=\R\cup\{\infty\}$.
We install a topology on $\H_\Ga$ as follows
\begin{itemize}
\item any $z\in\H$ has the usual neighborhood base in $\H$,
\item for $x\in \partial\H\sm\{\a\}$ we define a neighborhood base by switching to the disk model $\D$ of the hyperbolic space. On the disk, $d_e(x,y)$ shall denote the euclidean distance of $x$ and $y$. Then a neighborhood basis will be given by the sets $U_{I,\eps}=I\cup U_\eps$, where $I\subset B_\a$ is open and 
$U_\eps$ is the set of all $z\in \D$ which satisfy $d_e(z,\a)<\eps$ and $z$ lies on a geodesic joining $\a$ and a point in $I$. 
\begin{equation*} 
  \begin{tikzpicture}[scale=2]
    \tkzDefPoint(0,0){0} 
    \tkzDefPoint(1,0){A}
    \tkzDrawCircle[line width = 0.4 pt](0,A)
    \tkzDefPoint(2,-1){B}
    \tkzDefPoint(-1,0){E}
    \tkzDefPoint(-1,-1){F}
    \tkzDefPoint(0,-1){G}
    \tkzDefCircle[orthogonal from=B](0,A)\tkzGetFirstPoint{C}\tkzGetSecondPoint{D}
    \tkzDrawArc(B,C)(D)
    \tkzDrawArc(F,G)(E)
    \tkzInterCC[R](0, 1cm)(G, 1 cm) \tkzGetPoints{I}{J}
    \tkzDrawArc(G,I)(J)
    \tkzDefPoint(110:1){M}
    \tkzLabelPoint[above](M){$I$}
    \tkzDefShiftPoint[0](.05,-0.2){N}
    \tkzLabelPoint[below left](N){$U_\varepsilon$}
    \tkzLabelPoint[below](G){$\mathfrak{a}$}
   
    \tkzDrawPoint(G)
    \tkzDrawArc[line width = 1 pt, color = black](0,C)(E)
    \tkzDrawLine[add = 0 and 0, color = gray, line width = 0.2 pt](G,I)
    \tkzLabelLine[above](G,I){$\varepsilon$}
  \end{tikzpicture} 
\end{equation*}
\end{itemize}
The group $\Ga$ acts on $\H_\Ga$ in the following way. On $\H$ it is the usual action by hyperbolic isometries. For a point $b_\a(x)$ we set $\ga\cdot b_\a(x)=b_{\ga\a}(\ga x)$. It is easy to see that this $\Ga$-action is properly discontinuous.
A fundamental domain for this $\Ga$-action is given as follows: Let $F$ be a geometrically finite fundamental domain for the $\Ga$-action on $\H$.
It has finitely many inequivalent cusps.
For each cusp $\a$ of $F$ we choose $a_\a,b_\a\in B_\a\cong\R$ such that the unique geodesics $\al_\a,\beta_\a$ which join $\a$ to $a_\a$ and $b_\a$ respectively contain the two faces of $F$ which meet at $\a$.
\begin{equation*}
  \begin{tikzpicture}[scale=2]
    \tkzDefPoint(0,0){0} 
    \tkzDefPoint(1,0){A}
    \tkzDefPoint(0,1){H}
    \tkzDrawCircle[line width = 0.4 pt](0,A)
    \tkzDefPoint(2,-1){B}
    \tkzDefPoint(-1,0){E}
    \tkzDefPoint(-1,-1){F}
    \tkzDefPoint(0,-1){G}
    \tkzDefCircle[orthogonal from=B](0,A)\tkzGetFirstPoint{C}\tkzGetSecondPoint{D}
    \tkzDrawArc(B,C)(D)
    \tkzDrawArc(F,G)(E)
    \tkzInterCC[R](G, 1cm)(B, 2 cm) \tkzGetPoints{I}{J}
    \tkzDefPoint(110:1){M}
    \tkzLabelPoint[above](M){$I$}
    \tkzDefShiftPoint[0](.05,-0.2){N}
    \tkzLabelPoint[below](G){$\mathfrak{a}$}
    \tkzLabelPoint[left](E){$a_\mathfrak{a}$}
    \tkzLabelPoint[above right](C){$b_\mathfrak{a}$}
   
    \tkzDrawPoint(G)
    \tkzDrawArc[line width = 1 pt, color = black](0,C)(E)
   \end{tikzpicture} 
\end{equation*}
Then $F_\Ga=F\cup\bigcup (a_\a,b_\a)$ can be shown to be a fundamental domain of the $\Ga$-action on $\H_\Ga$.
The closure of $F_\Ga$ in $\H_\Ga$ is given by
$$
\ol{F_\Ga}^{\H_\Ga}=\ol F^\H\cup\bigcup_\a [a_\a,b_\a].
$$
As $F$ has only finitely many faces, it follows that $\ol{F_\Ga}^{\H_\Ga}$ is compact.
Consider two geometrically finite fundamental domains $F,F'\subset\H$.
Since $F_\Ga$ and $F'_\Ga$ are both relative compact and the $\Ga$-action is properly discontinuous, one can cover $F'_\Ga$ by finitely many $\Ga$-translates of $\ol{F_\Ga}$ and vice-versa. Taking the intersection with $\H$, the claim follows.
\end{proof}

If $\Ga$ is torsion-free and $\chi$ is unitary at the cusps, we can yet introduce an $L^2$-space in a different manner, such that again we get a topological isomorphism to the former spaces.
We let $\Ga$ act on $\H\times V$ diagonally. The quotient space
$$
\H\times_\Ga V=\Ga\bs (\H\times V)
$$
yields a flat vector bundle over $\Ga\bs\H$ with fibre $V$.
We choose a hermitean fibre metric $\sp{.,.}_s$ in such a way that it coincides with the given inner product near the cusps.
This means that for $z\in\H$ whose invariant height $y_\ga(z)$ is larger than some constant, say $y_\ga(z)>c$, the inner product on the fibre above $z$ coincides with the inner product on $V$. It is possible to find such a fibre metric, as the representation $\chi$ is unitary at the cusps.
Now define the Hilbert space $L^2(\Ga\bs\H,\chi)_s$ as the set of all measurable sections $f:\Ga\bs\H\to\H\times_\Ga V$ such that
$$
\int_{\Ga\bs \H}\norm{f(z)}_s^2\,dz<\infty
$$
modulo nullfunctions.
A section $f$ can be considered a map $f:\H\to V$ with $f(\ga z)=\chi(z)f(z)$ and so we find

\begin{proposition}
Let $\Ga$ be torsion-free and $\chi$ unitary at the cusps. Then for any geometrically finite fundamental domain $F$ the sets $L^2(\Ga\bs\H,\chi)_s$ and $L^2(F,\chi)$ coincide and the identity map yields a topological isomorphism of Hilbert spaces
$$
L^2(F,\chi)\to L^2(\Ga\bs \H,\chi)_s.
$$ 
\end{proposition}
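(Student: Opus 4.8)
The plan is to regard both Hilbert spaces as consisting of the same objects — measurable $\chi$-equivariant maps $f:\H\to V$ — and to reduce the statement to the equivalence of the two norms, since a two-sided norm estimate simultaneously forces the underlying sets to coincide and makes the identity map a topological isomorphism. First I would record that a measurable section of the flat bundle $\H\times_\Ga V$ is nothing but a measurable map $f:\H\to V$ with $f(\ga z)=\chi(\ga)f(z)$, so that both spaces sit inside the same ambient space of equivariant maps and differ only in the norm. Because the fibre metric $\sp{.,.}_s$ is a metric on the quotient bundle, the function $z\mapsto\norm{f(z)}_s^2$ is $\Ga$-invariant, and the hyperbolic measure is $\Ga$-invariant as well; hence the defining integral of $L^2(\Ga\bs\H,\chi)_s$ may be computed over the fixed fundamental domain $F$, giving
$$
\norm f_{L^2(F,\chi)}^2=\int_F\norm{f(z)}_V^2\,dz,\qquad \norm f_s^2=\int_{\Ga\bs\H}\norm{f(z)}_s^2\,dz=\int_F\norm{f(z)}_s^2\,dz.
$$

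Next I would exploit the decomposition $F=F(Y)\cup\bigcup_\a F_\a(Y)$ for $Y$ chosen larger than the constant $c$ beyond which the fibre metric was arranged to equal the fixed inner product. On each cuspidal zone $F_\a(Y)=\sigma_\a P(Y)$ every point satisfies $y_\Ga(z)\ge\Im(\sigma_\a^{-1}z)\ge Y>c$, so there $\norm{f(z)}_s=\norm{f(z)}_V$ pointwise and the two integrands contribute equally. It therefore remains only to compare the integrands over the central piece $F(Y)$.

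On $F(Y)$ I would invoke relative compactness. Since $\ol{F(Y)}$ is compact in $\H$ and the fibre metric is continuous, the function $(z,v)\mapsto\norm v_s^2/\norm v_V^2$ is continuous and strictly positive on the compact set $\ol{F(Y)}\times\{v\in V:\norm v_V=1\}$, hence bounded between two positive constants $c_1\le c_2$. This gives $c_1\norm{f(z)}_V^2\le\norm{f(z)}_s^2\le c_2\norm{f(z)}_V^2$ on $F(Y)$. Combining the exact equality on the cuspidal zones with this estimate on the central part yields
$$
\min(1,c_1)\,\norm f_{L^2(F,\chi)}^2\le\norm f_s^2\le\max(1,c_2)\,\norm f_{L^2(F,\chi)}^2,
$$
and the two-sided bound completes the proof.

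The argument has no serious obstacle; the only point requiring attention is the $\Ga$-invariance of the integrand $\norm{f(\cdot)}_s^2$, equivalently the compatibility $\norm{\chi(\ga)v}_{s,\ga z}=\norm v_{s,z}$. This is precisely where the standing hypotheses enter: torsion-freeness of $\Ga$ guarantees that $\H\times_\Ga V$ is an honest vector bundle on which such a metric lives, and unitarity of $\chi$ at the cusps is what allowed the fibre metric to be taken equal to $\sp{.,.}_V$ on the cuspidal zones in the first place, as noted in the construction above. The compactness estimate on $F(Y)$ is then entirely routine.
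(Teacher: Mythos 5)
Your proof is correct and follows essentially the same route as the paper's: pull the fibre metric back to the trivial bundle, observe that it agrees with the fixed inner product on the cuspidal zones $F_\a(Y)$, and use relative compactness of $F(Y)$ together with equivalence of norms to get a uniform two-sided estimate. Your write-up is somewhat more detailed (you make explicit the $\Ga$-invariance of the integrand and the reduction of the quotient integral to $F$), but the key idea is identical.
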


\begin{proof}
We pull back the smooth fibre metric $\sp{.,.}_s$ to the trivial bundle $\H\times V$ and denote this metric by $\sp{\sp{.,.}}$.
Since any two norms on a finite-dimensional space are equivalent and since $\sp{.,.}$ and $\sp{\sp{.,.}}$ coincide on cuspidal areas $F_\a(Y)$ for large enough $Y$, there exist constants $m,M>0$ such that
$$
m\sp{\sp{(z,v),(z,v)}}\le \sp{v,v}\le M\sp{\sp{(z,v),(z,v)}}
$$
holds for all $(z,v)\in F\times V$.
The claim follows.
\end{proof}

For this smooth fibre metric there exists corresponding Laplacian $\Delta_s$ which has the same principal symbol as the hyperbolic Laplacian
$$
\Delta=-y^2\(\frac{\partial^2}{\partial x^2}+\frac{\partial^2}{\partial y^2}\).
$$
The sign is chosen to make the operator positive semidefinite.
Let $(.,.)_s$ denote the inner product of $L^2(\Ga\bs \H,\chi_s)$.
We introduce the Sobolev spaces
\begin{align*}
H^1(\Ga\bs \H,\chi)_s&=\[ f\in L^2(\Ga\bs \H,\chi)_s: (f,f)_s+(\Delta_s f,f)_s<\infty\],\\
H^2(\Ga\bs \H,\chi)_s&=\[ f\in L^2(\Ga\bs \H,\chi)_s: (f,f)_s+(\Delta_s f,\Delta_s f)_s<\infty\],
\end{align*}
where $\Delta_sf$ is understood in the distributional sense.
For later use note that the function $f(z)=\Im(z)^s$ satisfies $\Delta f=s(1-s)f$.

\section{Estimating representations}
We assume $\Ga$ to be a torsion-free lattice with cusps and $\chi$ a finite-dimensional representation of $\Ga$ which is unitary at the cusps.

The classical Eisenstein series for a cusp $\a$ is defined by
$$
E_\a(z,s)=\sum_{\ga\in\Ga_\a\bs\Ga} \Im(\sigma_\a^{-1}\ga z)^s,
$$
the series being convergent for $z\in\H$, $s\in\C$, $\Re(s)>1$.
For the $\chi$-twist we define the Eisenstein series by
$$
E_\a(z,s,\chi)=\sum_{\ga\in\Ga_\a\bs\Ga}\Im(\sigma_\a^{-1}\ga z)^s\,\chi(\ga^{-1})\,P_\a.
$$
We will have to show convergence.

{\bf Notation.} In the following, we shall use the ``big O'' and the ``$\ll$'' notation: for a set $X$ and two functions $f:X\to\C$ and $h:X\to (0,\infty)$ we write
$$
f=O(h),\quad\text{or}\quad f\ll h,
$$
if there exists a constant $C>0$, such that 
$$
|f(x)|\le C\, h(x)
$$
holds for every $x\in X$.
Any choice of $C$ will be referred to as the \emph{implied constant}.

\begin{proposition}
There exists $\al>0$ such that the operator norm satisfies
$$
\norm{\chi(\ga)}=O(\Im(\ga z)^\al),
$$
where the implied constant depends continuously on $z\in\H$.
This means that there exists a continuous function $C:\H\to (0,\infty)$ such that 
$$
\norm{\chi(\ga)}\le C(z)\Im(\ga z)^\al
$$
holds for all $\ga\in\Ga$ and all $z\in\H$.

In particular, the Eisenstein series $E_\a(z,s,\chi)$ converges locally uniformly absolutely for $\Re(s)>1+\al$. 
\end{proposition}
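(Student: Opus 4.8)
The plan is to prove the substantive estimate in the form
\begin{equation*}
\norm{\chi(\ga)}\ll\norm{\ga}_{\Frob}^{2\al}
\end{equation*}
for a suitable $\al>0$, and then to read off both the asserted bound and the convergence from the elementary relation between the matrix norm of $\ga$ and the height $\Im(\ga z)$. Recall that $\Ga$, being torsion-free with cusps, is free on a finite generating set $S=\{\tau_1,\dots,\tau_r\}$ (\cite{Katok}); set $M=\max_{\tau\in S\cup S^{-1}}\norm{\chi(\tau)}\ge 1$ and let $\ell(\ga)$ denote word length with respect to $S$. Submultiplicativity of the operator norm gives the crude bound $\norm{\chi(\ga)}\le M^{\ell(\ga)}$, which is far too lossy: along a cusp the parabolic powers $\tau^k$ have word length $k$ while the hyperbolic distance $d(i,\tau^k i)$ grows only like $\log k$, so $M^{\ell(\ga)}$ outgrows every power of $\norm{\ga}_{\Frob}=\sqrt{2\cosh d(i,\ga i)}$.

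The main point, and the step that uses the hypothesis, is to recover moderate growth from the fact that $\chi$ is unitary at the cusps. In the reduced word for $\ga$ I would isolate the maximal parabolic blocks, that is, the maximal sub-blocks $\eta\,p^{\,k}\eta^{-1}$ with $p$ a power of a parabolic generator; such a block is sent by $\chi$ to $\chi(\eta)\chi(p^{\,k})\chi(\eta)^{-1}$ with $\chi(p^{\,k})$ unitary, hence contributes a factor whose operator norm is bounded independently of $k$. After collapsing these blocks only a bounded number of non-parabolic transitions remains, and this reduced count is comparable to the distance $d(i,\ga i)$. Making this parsing precise — so that cusp excursions cost nothing for $\chi$ even though they contribute their full length to $\ell(\ga)$ — is what I expect to be the main obstacle; its outcome is $\log\norm{\chi(\ga)}\ll d(i,\ga i)$, which via $\norm{\ga}_{\Frob}^2=2\cosh d(i,\ga i)$ is precisely the displayed polynomial bound.

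It remains to translate this into geometry and to deduce convergence. Writing $\ga=\smat abcd$ one has $\Im(\ga z)=\Im(z)/|cz+d|^2$, where $|cz+d|^2$ is a positive-definite quadratic form in $(c,d)$ comparable to $c^2+d^2$ with constants depending continuously on $z$; together with $\norm{\ga}_{\Frob}^2=a^2+b^2+c^2+d^2$ this converts the polynomial bound into an estimate of the stated shape, with continuous implied constant $C:\H\to(0,\infty)$. For the Eisenstein series one sums over $\Ga_\a\bs\Ga$ and chooses in each coset the representative with $\Re(\sigma_\a^{-1}\ga z)\in[0,1)$; for this representative the top row of $\sigma_\a^{-1}\ga$ is controlled by its bottom row, so the polynomial bound reads $\norm{\chi(\ga^{-1})}\ll_z\Im(\sigma_\a^{-1}\ga z)^{-\al}$. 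Each term of $E_\a(z,s,\chi)$ is then dominated in norm by $C(z)\,\Im(\sigma_\a^{-1}\ga z)^{\Re(s)-\al}$, and comparison with the classical Eisenstein series, convergent for exponent larger than $1$, gives absolute convergence exactly for $\Re(s)>1+\al$; the continuity of $C$ promotes this to local uniform convergence.
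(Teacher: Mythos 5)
Your overall strategy coincides with the paper's: first prove a polynomial bound $\norm{\chi(\ga)}\ll\mu(\ga)^{\al}$ in the squared Frobenius norm $\mu(\ga)=a^2+b^2+c^2+d^2$ by exploiting that $\chi$ is unitary on the parabolic subgroups, then pass to the height via $\Im(\ga z)=y/|cz+d|^2$ together with a good choice of coset representative in $\Ga_\a\bs\Ga$ (legitimate because $\norm{\chi(\ga_\a\ga)}=\norm{\chi(\ga)}$), and finally compare with the classical Eisenstein series. Your endgame is correct and is exactly Lemmas \ref{lem2.10}--\ref{lem2.12} and Proposition \ref{prop3.3}; note that you have silently, and correctly, replaced the exponent $+\al$ in the statement by $-\al$, which is what Lemma \ref{lem2.12} actually delivers and what the convergence for $\Re(s)>1+\al$ requires.

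The genuine gap is precisely the step you flag as ``the main obstacle'': you assert, but do not prove, that after collapsing the parabolic blocks the number of remaining letters and blocks is $O(d(i,\ga i))=O(\log\mu(\ga)+1)$. This is the heart of the proposition and is where the paper spends essentially all of Section 2. Your framing in terms of the reduced word of the abstract free group also makes the claim harder rather than easier: an arbitrary reduced word need not track the geodesic from $i$ to $\ga i$, the parsing into maximal blocks of the form $\eta p^k\eta^{-1}$ is not canonical, and the comparison between word structure and hyperbolic displacement is exactly what breaks down in the cusp directions. The paper sidesteps the abstract word entirely: it builds a \emph{normal presentation} $\ga=\eta_1\cdots\eta_r$ by following the geodesic from $z_0$ to $\ga z_1$ through the Dirichlet tessellation and recording the chambers crossed. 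For this geometric presentation one proves that each non-parabolic step shortens the remaining distance by a uniform constant $c_1$ (Lemma \ref{lem2.3}), and that any two consecutive steps which are not parabolics in the same conjugacy class shorten it by a uniform $c_2$ (Lemma \ref{lem2.4}); grouping consecutive parabolic letters of one class into \emph{tranches} then yields at most $C_\Ga(\log\mu(\ga)+1)$ tranches (Proposition \ref{prop2.7}, via $\mu(g)=2\cosh d(i,gi)$ from Lemma \ref{lem2.6}), each contributing a uniformly bounded factor to $\norm{\chi(\ga)}$ because $\chi$ is unitary on every $\Ga_\a$. To complete your argument you would either have to reproduce this geodesic-tracking construction or prove a fellow-traveling statement relating your reduced word to the geodesic; neither is automatic, and ``only a bounded number of non-parabolic transitions remains'' is false if read literally --- the count is unbounded and must instead be shown to be $O(\log\mu(\ga))$.
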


The proof of this proposition occupy the rest of the section.

First observe that, as $\chi$ is unitary at the cusps, we have
$$
\norm{\chi(\ga_\a\ga)}=\norm{\chi(\ga)}
$$ 
for $\ga\in\Ga$ and $\ga_\a\in\Ga_\a$.

\begin{definition}
(Normal presentation)
We fix a fundamental domain $F$ of the form $F=D(z_0)$. Let $\ga_jF$ with $j=1,\dots,l$ be the neighboring domains. Then the set $\{\ga_1,\dots,\ga_l\}$ is a symmetric generating set of the group $\Ga$.

A presentation $\ga=\eta_1\eta_2\cdots\eta_r$ with $\eta_i\in \{\ga_1,\dots,\ga_l\}$ is called \e{normal}, if the $\eta_i$ are chosen in the following manner:
We fix a second point $z_1\in F$ different from $z_0$ and chosen such that for any two $\tau_0,\tau_1\in\Ga$ the geodesic joining $\tau_0 z_0$ to $\tau_1 z_1$ is different from any geodesic being a boundary line of any fundamental domain $\sigma F$, $\sigma\in \Ga$.
We assume $\eta_1,\dots\eta_{j-1}$ to be already found.
Now join $\eta_1\cdots\eta_{j-1}z_0$ with $\ga z_1$ by a geodesic. Following this geodesic from $\eta_1\cdots\eta_{j-1}z_0$ in the direction of $\ga z_1$, after leaving the fundamental domain $\eta_1\cdots\eta_{j-1}F$ the geodesic enters a fundamental domain of the form $\eta_1\cdots\eta_{j-1}\ga_kF$ with $1\le k\le l$.
We set $\eta_j=\ga_k$.
As the geodesic distance $d(\eta_1\cdots\eta_j z_0,\ga z_1)$ decreases, this procedure will stop and give a presentation of $\ga$.
\end{definition}

\begin{equation*}
  \begin{tikzpicture}[scale=2]
    \draw (0,0) -- (5,0); \foreach \x in {0,...,3} { \draw (\x,0) arc
      (180 : 0 : 1);} \draw (1,0) arc (0: 90 :1); \draw (4,0) arc (180
    : 90 : 1); \foreach \y in {0,...,4}{ \foreach \x in {1,...,8}{
        \draw (\y,0) arc (180 : 0 : {1.0/(\x *2 + 1)}); \draw ({\y
          +1},0) arc (0 : 180 : {1.0/(\x *2 + 1)}); }} \foreach \y in
    {0,...,4}{ \foreach \x in {1,...,4}{ \draw ({1.0/(\x + 1.0) +
          \y},0) arc (0 : 180 : {1.0/(2 * \x + 6)});}} \foreach \y in
    {0,...,4}{ \foreach \x in {1,...,4}{ \draw ({1.0 - (1.0/(\x +
          1.0)) + \y},0) arc (180 : 0 : {1.0/(2 * \x + 6)});}}
    \foreach \x in {0,...,4}{ \draw ({\x + 0.5}, 0) -- +(0,2.5);}
    \coordinate[label=left: $z_{0}$] (z_0) at (1,1.5); \fill (1,1.5)
    circle (0.5pt); \coordinate[label=left: $z_{1}$] (z_1) at
    (1,1.25); \fill (1,1.25) circle (0.5pt); \coordinate[label=right:
    $\gamma z_{1}$] (gz) at (4,0.8); \fill (gz) circle (0.5pt);
    \draw[very thin] ({-sqrt(((9 + 16.0/25 - 2.25)/6)^2 + (2.25)) +1 +
      (9 + 16.0/25 - 2.25)/6},0) arc (180 : 0 : {sqrt(((9 + 16.0/25 -
      2.25)/6)^2 + (2.25))}); \coordinate[label=above: $F$] (F) at
    (1,2); \coordinate[label=above: $\eta_1F$] (etaF) at (2,2); \fill
    (2,1.5) circle (0.5pt); \coordinate[label=left: $\zeta_1$]
    (zeta_1) at (2,1.5);

  \end{tikzpicture}
\end{equation*}

We write $\zeta_j=\eta_1\cdots\eta_jz_0$.

\begin{lemma}\label{lem2.3}
There exists a positive constant $c_1$ independent of $\ga$ and $z_1$ such that for the hyperbolic distance we have
$$
d(\zeta_j,\ga z_1)\le d(\zeta_{j-1},\ga z_1)-c_1,
$$
if $\eta_j$ is not parabolic.
\end{lemma}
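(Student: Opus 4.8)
The plan is to reduce the inequality to a two-point comparison by applying the isometry $g=\eta_1\cdots\eta_{j-1}$. Writing $w=g^{-1}\ga z_1$, $p=z_0$ and $q=\eta_j z_0$, and using that $g$ preserves distances, one has $d(\zeta_{j-1},\ga z_1)=d(p,w)$ and $d(\zeta_j,\ga z_1)=d(q,w)$, so the assertion becomes $d(q,w)\le d(p,w)-c_1$. By construction the geodesic from $p=z_0$ towards $w$ leaves $F=D(z_0)$ through the wall it shares with $\eta_j F=D(q)$; this wall is the perpendicular bisector $B$ of the segment $[p,q]$, and I denote by $x$ the crossing point, which is transversal by the genericity of $z_1$. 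Since $x\in B$ we have $d(p,x)=d(q,x)=:a$, and since $x$ lies on the geodesic $[p,w]$ we have $d(p,w)=a+b$ with $b:=d(x,w)$.

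Next I would extract an exact formula for the decrement. The reflection across $B$ fixes $x$ and interchanges $p$ and $q$, so the rays $xp$ and $xq$ make equal angles with $B$; hence $\angle pxq=2\theta$, where $\theta$ is the angle at which the geodesic $[p,w]$ meets $B$. As $xw$ is the continuation of $xp$ through $x$, the angle at $x$ in the triangle $qxw$ equals $\pi-2\theta$, and the hyperbolic law of cosines gives
$$
\cosh d(p,w)-\cosh d(q,w)=2\sinh a\,\sinh b\,\sin^2\theta.
$$
Combining this with the mean value theorem for $\cosh$, so that the left side is at most $\sinh d(p,w)\cdot(d(p,w)-d(q,w))$, yields the lower bound
$$
d(p,w)-d(q,w)\ \ge\ \frac{2\sinh a\,\sinh b\,\sin^2\theta}{\sinh(a+b)}=\frac{2\sin^2\theta}{\coth a+\coth b}.
$$
It therefore suffices to bound $a$, $b$ and $\theta$ away from their degenerate values, uniformly in $\ga$ and $z_1$.

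For $a$ this is immediate: every wall of $D(z_0)$ lies at distance at least $\rho:=\tfrac12\min_{1\ne\ga\in\Ga}d(z_0,\ga z_0)$ from $z_0$, which is positive because $\Ga$ is discrete and torsion-free, so $a\ge\rho$ and $\coth a\le\coth\rho$. For $b$ I would use that $w$ lies in the interior of a domain other than $F$; since $x\in B$ one has $b=d(x,w)\ge d(w,B)$, and tracing this back shows $b$ is at least the positive distance from the interior point $z_1$ to the finitely many non-cuspidal walls of $F$, so $\coth b\le\coth b_0$ for a constant $b_0>0$. The remaining quantity, the incidence angle $\theta$, is where the hypothesis that $\eta_j$ be non-parabolic enters and is the main obstacle.

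The crux is a uniform lower bound $\theta\ge\theta_0>0$. Up to the $\Ga$-action there are only finitely many walls, and those paired by non-parabolic generators are \emph{compact} geodesic segments whose endpoints are finite vertices of $F$; a geodesic issuing from the compact core of $F$ therefore meets such a wall at an angle bounded below by a constant determined by the finite geometry of $F$. By contrast, the walls paired by a parabolic generator run out to a cusp of $\partial\H$, and as the crossing point slides towards the cusp the incidence angle degenerates to $0$; this is precisely the mechanism that makes the decrement vanish and is the reason the lemma is stated only for non-parabolic $\eta_j$. Once $\theta\ge\theta_0$ is established, the displayed estimate gives the claim with
$$
c_1=\frac{2\sin^2\theta_0}{\coth\rho+\coth b_0}>0,
$$
which is independent of $\ga$ and $z_1$. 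I expect the careful verification of the angle bound, namely identifying the non-cuspidal walls, checking compactness of representatives of their $\Ga$-orbits, and ruling out tangential crossings away from the cusps, to be the only genuinely delicate point; the rest is the reflection computation above.
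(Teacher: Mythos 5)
Your route is essentially the paper's: locate the exit point $x$ of the geodesic on the Dirichlet wall $\ol F\cap\eta_j\ol F$, get positivity of the decrement from the fact that this wall lies on the perpendicular bisector of $[z_0,\eta_jz_0]$, and obtain uniformity from compactness of that wall, which is exactly where non-parabolicity of $\eta_j$ enters. What you add is the explicit identity $\cosh d(p,w)-\cosh d(q,w)=2\sinh a\,\sinh b\,\sin^2\theta$ and the resulting bound $d(p,w)-d(q,w)\ge 2\sin^2\theta/(\coth a+\coth b)$; I checked this computation and it is correct, and it makes quantitative what the paper only asserts ("$z$ varies in a compact set, which implies independence of $\ga$").

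The one genuine gap is the one you flag yourself: the uniform angle bound $\theta\ge\theta_0>0$. Your justification ("a geodesic issuing from the compact core meets such a wall at an angle bounded below by the finite geometry of $F$") is a plausibility statement, not an argument; as stated it is not obvious, since a geodesic can cross a fixed complete geodesic at arbitrarily small angle. It closes cleanly as follows: drop the perpendicular from $z_0$ to the complete geodesic $B$ containing the wall; in the right triangle with hypotenuse $[z_0,x]$ the hyperbolic sine rule gives
$$
\sin\theta=\frac{\sinh d(z_0,B)}{\sinh d(z_0,x)}\ \ge\ \frac{\sinh\rho}{\sinh A},\qquad A:=\sup\big\{d(z_0,y):y\in\ol F\cap\eta_j\ol F\big\},
$$
where $d(z_0,B)=\frac12 d(z_0,\eta_jz_0)\ge\rho$ and $A<\infty$ precisely because the wall of a Dirichlet domain paired by a non-parabolic element has no ideal endpoint. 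Taking the minimum over the finitely many non-parabolic generators gives $\theta_0$. Two smaller corrections: your lower bound for $b$ should be $b=d(x,w)\ge d(w,\ol F)\ge d(z_1,\partial F)$, the distance from $z_1$ to \emph{all} of $\partial F$ (the point $w$ lies in a translate $\tau F$ with $\tau\ne 1$ and may well be closest to a cuspidal wall of that translate, so restricting to non-cuspidal walls overstates the bound). And since $b_0$ depends on $z_1$, your final $c_1$ does too, which conflicts with the literal "independent of $z_1$" in the statement; the paper's own proof has the same feature, and the downstream uses (Lemma \ref{lem2.4}, Proposition \ref{prop2.7}) explicitly allow dependence on the fixed points $z_0,z_1$, so this is a defect of the statement rather than of your argument.
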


\begin{proof}
We recall that the fundamental domain $F$ is of the form $F=\{ z\in\H: d(z,z_0)<d(z,\ga z_0)\text{ for all }\ga\in\Ga\}$.
Let $g(\zeta_{j-1},\ga z_1)$ denote the geodesic joining $\zeta_{j-1}$ and $\ga z_1$.
Let $z\in g(\zeta_{j-1},\ga z_1)$ be the unique point between $\zeta_{j-1}$ and $\ga z_1$ which lies in the boundary of $\eta_1\cdots\eta_{j-1} F$. 

\begin{center}
\begin{tikzpicture}
    \draw (0,15) arc (90: 50 : 15); \coordinate[label=above:
    $\zeta_{j-1}$] (z_{i-1}) at (0,15); \fill (z_{i-1}) circle (1pt);
    \coordinate[label = above: $\gamma z_1$] (z) at (50:15); \fill (z)
    circle (1pt);  
    \coordinate[label = right :$\zeta_{j}$] (zeta_i) at (74:13);
    \coordinate[label = above : $z$] (eta) at (80:15); \fill
    (80:15) circle (1pt);       
    \draw (80:15) arc (150 : 160 : 15); \draw (80:15) arc (150 : 140 :
    15);
\end{tikzpicture}
\end{center}

Since the fundamental domain $\eta_1\cdots\eta_jF$ is the dirichlet domain with center $\zeta_j$, all of the geodesic line, which is to the right of $z$, is nearer to $\zeta_j$ than to $\zeta_{j-1}$. For $\ga z_1$ this means 
$$
0\le d(\zeta_{j-1},\ga z_1)-d(\zeta_j,\ga z_1),
$$
where we would have equality only if $\ga z_1=z$, which is impossible.
So we get
$$
d(\zeta_{j-1},\ga z_1)-d(\zeta_j,\ga z_1)\ge c_1
$$
with some $c_1>0$. We have  make clear that we can choose $c_1$ independent of $\ga$.
This becomes clear by
$$
d(\zeta_{j-1},\ga z_1)-d(\zeta_j,\ga z_1)=
d(z_0,\ga' z_1)-d(\eta_j z_0,\ga' z_1)
$$
with $\ga'=\eta_1\cdots\eta_{j-1}\ga$, which reduces the claim to the case $j=1$.
In this case, as $\eta_j$ is not parabolic, $z$ can only vary in the compact part of the boundary of $F$ where $\ol F$ meets $\eta_j\ol F$. This implies independence of $\ga$.
\end{proof}

\begin{lemma}\label{lem2.4}
There exists a constant $c_2>0$, independent of $\ga$ and $z_1$ such that
$$
d(\zeta_{j+1},\ga z_1)\le d(\zeta_{j-1},\ga z_1)-c_2
$$
as long as $\eta_j,\eta_{j+1}$ are not parabolic elements belonging to the same $\Ga$-conjugacy class.
\end{lemma}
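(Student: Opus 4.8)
The plan is to follow the strategy of Lemma \ref{lem2.3}, reducing the two-step estimate to a statement about a single passage through the compact core and handling the parabolic generators by a separate geometric argument at the cusps. As in the proof of Lemma \ref{lem2.3}, I first record the non-strict monotonicity
$$
d(\zeta_k,\ga z_1)\le d(\zeta_{k-1},\ga z_1),
$$
valid for \emph{every} generator $\eta_k$, parabolic or not: the domain $\eta_1\cdots\eta_k F$ is the Dirichlet domain centered at $\zeta_k$, and the part of the geodesic beyond the crossing point lies closer to $\zeta_k$ than to $\zeta_{k-1}$. Translating by $(\eta_1\cdots\eta_{j-1})^{-1}$, exactly as in Lemma \ref{lem2.3}, I reduce to the case $j=1$, so that I must bound $d(z_0,w)-d(\eta_1\eta_2 z_0,w)$ from below, where the geodesic from $z_0$ to $w$ crosses first the side $s_1=\ol F\cap\eta_1\ol F$ and then the side $\eta_1 s_2$ with $s_2=\ol F\cap\eta_2\ol F$.

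If at least one of $\eta_1,\eta_2$ is not parabolic, the argument is immediate: I apply Lemma \ref{lem2.3} to the non-parabolic step, gaining the fixed $c_1>0$, and use the monotonicity above at the other step. This already gives the claim with $c_2=c_1$, so the real content is the case in which both $\eta_1$ and $\eta_2$ are parabolic but generate parabolic subgroups attached to \emph{distinct} cusps, which I call $\a$ and $\b$.

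For this main case the obstacle is that a single parabolic crossing high up a cusp produces no uniform gain, since there the relevant perpendicular bisector is asymptotically vertical. The key geometric observation I will use is that two consecutive parabolic crossings attached to different cusps cannot both occur high in a cusp. Normalizing by $\sigma_\a^{-1}$ so that $\a=\infty$ and $\Ga_\a$ is generated by $z\mapsto z+1$, the cuspidal part of $F$ becomes a vertical strip of width one, bounded by its two sides running to $\a$, and these are exactly the sides paired by the parabolic generators attached to $\a$. The geodesic enters $\eta_1 F$ through the $\a$-side $s_1$ at some height $h$; if $h$ is large, the entry point lies in the width-one cuspidal strip of $\eta_1 F$, and a geodesic entering that strip so high cannot descend to bounded height without first exiting through one of the two vertical sides, both of which run to $\a$. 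Since a normal presentation never backtracks ($\eta_2\ne\eta_1^{-1}$) and, by the side-pairing pattern at $\a$, crossing either of those sides would attach $\eta_2$ to $\a$ as well, this contradicts $\b\ne\a$. Hence the crossing height $h$ is bounded by a constant $Y_1$ depending only on $F$.

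Once $h\le Y_1$, the first crossing point is confined to a fixed compact portion of the $\a$-side, and the argument of Lemma \ref{lem2.3} — which extracted a uniform gain precisely from the crossing point ranging over a compact set — applies to give a fixed decrease $c'>0$ at the first step; together with the monotonicity at the second step this proves the lemma with $c_2=\min\{c_1,c'\}$. The step I expect to require the most care is making the width-one strip estimate quantitative and uniform over all cusps $\a$ and all admissible endpoints $w$: one must produce the explicit bound $Y_1$ and verify that it, and hence $c_2$, depends only on $F$ and not on $\ga$ or $z_1$.
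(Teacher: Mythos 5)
Your overall strategy coincides with the paper's: reduce to $j=1$, dispose of the case where one of $\eta_1,\eta_2$ is non-parabolic via Lemma \ref{lem2.3} plus monotonicity, and in the remaining case show that the crossing cannot take place deep in a cuspidal zone, so that the compactness argument of Lemma \ref{lem2.3} applies. The reductions are fine, but the key geometric claim carrying the main case --- ``a geodesic entering that strip so high cannot descend to bounded height without first exiting through one of the two vertical sides'' --- is false as a statement about an arbitrary geodesic. Normalize $\a=\infty$ so that the cuspidal part of $\eta_1 F$ is the strip $1<x<2$, $y\ge Y$, entered through the wall $x=1$ at $(1,h)$. The semicircle centred at $c=-M$ with radius $r=\sqrt{(1+M)^2+h^2}$ passes through $(1,h)$ moving rightward and downward, and meets $y=Y$ at $x=-M+\sqrt{r^2-Y^2}\le 1+\tfrac{h^2-Y^2}{2(1+M)}<2$ once $M\ge h^2$. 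Thus for every height $h$ there are geodesics which cross from the strip of $F$ into the strip of $\eta_1F$ at height $h$ and then leave that strip through its \emph{bottom}, never meeting the other vertical wall: nearly vertical geodesics slide down the cusp. So the bound $Y_1$ you want does not exist at the level of generality at which you assert it, and this is precisely the step on which the whole main case rests.

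The claim is rescued only by using the datum you have not exploited, namely that the geodesic emanates from the fixed point $z_0$, which sits at bounded height $y_0$ with $0<x_0<1$. If the exit point $z=(1,h)$ had $h$ large and the geodesic were descending there, its centre $c$ would satisfy $c<1$ and its radius $r\ge h$; passing through $z_0$ forces $|x_0-c|=\sqrt{r^2-y_0^2}\ge\sqrt{h^2-y_0^2}$, so $c$ lies far to the left, both $z_0$ and $(1,h)$ lie on the descending branch, and $x_0<1$ then gives $y_0>h$, a contradiction. Hence for large $h$ the geodesic is \emph{ascending} at $z$, so it leaves $\eta_1F$ through the opposite vertical wall $\eta_1 s_1$ at even greater height, forcing $\eta_2$ to be attached to $\a$ --- the contradiction you want. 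With this repair your argument closes and is in substance the paper's: the paper phrases the same dichotomy by tracking both crossing points $z$ and $z'$ and observing that if both escaped to infinity, the endpoint $\ga z_1$ would have to lie simultaneously in the cuspidal zones of the two distinct cusps $\a$ and $\b$.
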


\begin{proof}
In the case that $\eta_j$ or $\eta_{j+1}$ is not parabolic, one can apply the previous lemma.
So assume that both are parabolic but not $\Ga$-conjugate.
They fix two different cusps $\a$ and $\b$ respectively.
Then $\a$ is the cusp which $F$ and $\eta_j F$ have in common and similarly with $\b$.
For simplicity, we assume $j=1$.
If we vary $\ga z_1$, then the point $z$ as in the last picture, varies on the common boundary of $F$ and $\eta_jF$.
We define $z$ and $w$ as in the picture and by $z', w'$ we denote the corresponding points for $j+1$.
Let a positive constant $C>0$ be given.
We first consider the case that
$$
d(\zeta_{i-1},z)\le C\quad\text{or}\quad d(\zeta_j,z')\le C.
$$
This means that either $z$ or $z'$ are restricted to a compact subset and the same argument as in the proof of Lemma \ref{lem2.3} applies.
We therefore assume $d(\zeta_{i-1},z), d(\zeta_j,z')\ge C$.
Our starting point is the inequality
$$
d(\zeta_{j-1},z)<d(\zeta_{j-1},\ga z_1)
$$
and the same for $\zeta_j$.
We shall show that $d(\zeta_{i-1},z)$ and $ d(\zeta_j,z')$ cannot tend both to infinity, which means that one of them is restricted to a compact set.
If $d(\zeta_{j-1},z)\to\infty$, then also $d(\zeta_{j-1},\ga z_1)\to\infty$ and the same holds for $d(\zeta_j,\ga z_1)$.
This means that $z$ moves into the cuspidal area of $\a$ and so does $\ga z_1$. Assuming $d(\zeta_j,z')\to\infty$ too, we get by the same reasoning that $\ga z_1$ moves into the cuspidal area of $\b$, but as $\a\ne \b$, this is a contradiction.
\end{proof}

\begin{definition}
Let $\ga=\eta_1\cdots \eta_r$ be given in normal presentation. If $\eta_1$ is not parabolic, set $\nu_1=\eta_1$. Otherwise set $\nu_1=\eta_1\cdots\eta_{r_1}$, where $r_1$ is the biggest number such that $\eta_1,\eta_2,\dots,\eta_{r_1}$ all belong to the same parabolic conjugacy class.
Continuing in the same way for $\nu_2,\dots,$ we obtain a new presentation
$$
\ga=\nu_1\cdots\nu_k.
$$
The occurring $\nu_j$ are called the \e{tranches} of $\ga$.
For $g=\smat abcd\in G$ we set
$$
\mu(g)=a^2+b^2+c^2+d^2.
$$
\end{definition}

\begin{lemma}\label{lem2.6}
For every $g\in G$ we have
$$
\frac{\mu(g)}2\le\exp(d(i,g i))\le\mu(g).
$$
\end{lemma}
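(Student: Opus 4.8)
The plan is to reduce both inequalities to a single exact identity, namely
$$
\cosh\bigl(d(i,gi)\bigr)=\frac{\mu(g)}2,
$$
after which the two-sided estimate becomes an elementary statement about the hyperbolic cosine. First I would compute $gi$ explicitly: writing $g=\smat abcd$ with $ad-bc=1$, a short calculation (clearing the denominator $ci+d$ by its conjugate) gives
$$
gi=\frac{(ac+bd)+i}{c^2+d^2},
$$
so in particular $\Im(gi)=\tfrac1{c^2+d^2}$. Then I would invoke the standard distance formula on $\H$,
$$
\cosh\bigl(d(z,w)\bigr)=1+\frac{|z-w|^2}{2\,\Im(z)\,\Im(w)},
$$
applied with $z=i$ and $w=gi$.

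The one computational step that needs care is the simplification of the numerator $|gi-i|^2$. After expanding, the crucial input is the determinant relation $ad-bc=1$, which yields the identity $(ac+bd)^2+1=(a^2+b^2)(c^2+d^2)$. Substituting this, the whole expression collapses to
$$
\cosh\bigl(d(i,gi)\bigr)=1+\frac{\mu(g)-2}2=\frac{\mu(g)}2,
$$
as claimed. (Alternatively one can argue via the Cartan decomposition $g=k_1\,\mathrm{diag}(e^t,e^{-t})\,k_2$ with $k_1,k_2\in\SO(2)$ fixing $i$: then $d(i,gi)=2t$ while $\mu(g)=\operatorname{tr}(g^{\mathrm t}g)=e^{2t}+e^{-2t}=2\cosh(2t)$, giving the same identity. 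I would keep the direct computation as the primary route since it is self-contained.)

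Finally, setting $D=d(i,gi)\ge 0$ and using $\mu(g)=2\cosh D=e^{D}+e^{-D}$, both bounds drop out at once: the upper estimate $\exp(d(i,gi))=e^{D}\le e^{D}+e^{-D}=\mu(g)$ holds because $e^{-D}>0$, and the lower estimate $\tfrac{\mu(g)}2=\cosh D\le e^{D}$ holds because $e^{-D}\le e^{D}$, which is exactly the statement $D\ge 0$, i.e.\ the non-negativity of the hyperbolic distance.

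I do not expect a genuine obstacle here; the only thing to get right is the algebraic simplification, and the determinant identity $(ac+bd)^2+1=(a^2+b^2)(c^2+d^2)$ is what makes it clean. The conceptual content is entirely in the exact formula $\cosh(d(i,gi))=\mu(g)/2$; the inequalities themselves are then just the trivial comparison $\cosh D\le e^{D}\le 2\cosh D$ for $D\ge 0$.
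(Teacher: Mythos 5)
Your proof is correct. Your primary route differs from the paper's: you compute $gi=\frac{(ac+bd)+i}{c^2+d^2}$ directly and use the distance formula $\cosh(d(z,w))=1+\tfrac12 u(z,w)$ together with the identity $(ac+bd)^2+1=(a^2+b^2)(c^2+d^2)$ to obtain the exact relation $\cosh(d(i,gi))=\mu(g)/2$, from which both inequalities are the elementary comparison $\cosh D\le e^D\le 2\cosh D$ for $D\ge 0$. The paper instead uses the Cartan decomposition $g=k_1Dk_2$ with $D=\mathrm{diag}(a,a^{-1})$, $a\ge 1$: both $\mu$ and $d(i,\cdot\, i)$ are unchanged under $\SO(2)$-multiplication, so one reduces to $g=D$, where $d(i,Di)=2\log a$ and $\mu(D)=a^2+a^{-2}$, and the claim becomes $\frac{a^2+a^{-2}}2\le a^2\le a^2+a^{-2}$ --- which is of course the same identity $\mu(g)=2\cosh(d(i,gi))$ in disguise. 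Your parenthetical alternative is precisely the paper's argument. The trade-off is minor: the Cartan route avoids the algebraic simplification of $|gi-i|^2$ at the cost of invoking the decomposition and checking the two invariances, while your computation is self-contained and makes the exact $\cosh$ identity explicit, which is arguably more informative than the two-sided bound itself.
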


\begin{proof}
By the Cartan decomposition we can write $g=k_1 Dk_2$ with $k_1,k_2\in \SO(2)$ and $D=\smat a\ \ {a^{-1}}$ for some $a\ge 1$.
Since neither of the expression in the proposition changes under $SO(2)$-multiplication, we can assume that $g=D$.
In this case $d(i,gi)=2\log a$ and we have
$\frac{a^2+a^{-2}}2\le a^2\le a^2+a^{-2}$.
\end{proof}

\begin{proposition}\label{prop2.7}
Let $\ga\in \Ga$ and 
$$
\ga=\eta_1\cdots\eta_r
$$
its normal presentation. The number of tranches $k$ in the presentation can be estimated by
$$
k\le C_\Ga\(\log(\mu(\ga))+1\)
$$
for some constant $C_\Ga$ depending on the group $\Ga$ and the points $z_0,z_1$, but not on the element $\ga$.
\end{proposition}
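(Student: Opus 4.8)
The plan is to show that each passage between consecutive tranches forces the distances
$d_j := d(\zeta_j,\ga z_1)$ to drop by a fixed amount, and then to compare the accumulated drop with an upper bound for the total decrease $d_0-d_r$ furnished by Lemma \ref{lem2.6}. First I would record the monotonicity $d_0\ge d_1\ge\cdots\ge d_r$. This is exactly the Dirichlet-domain bisector argument already carried out in the proof of Lemma \ref{lem2.3}: there the non-parabolic hypothesis is needed only for the \emph{uniform} gap $c_1$, while the bare inequality $d_j\le d_{j-1}$, stating that $\ga z_1$ lies on the $\zeta_j$-side of the bisector between $\zeta_{j-1}$ and $\zeta_j$, holds at every step regardless of the type of $\eta_j$.

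Next I would bound the total decrease. Writing $z_0=g_0 i$ and using that $\ga$ acts by isometries, one has $\zeta_r=\ga z_0$, hence $d_r=d(\ga z_0,\ga z_1)=d(z_0,z_1)$, and by the triangle inequality $d_0-d_r=d(z_0,\ga z_1)-d(z_0,z_1)\le d(z_0,\ga z_0)=d(i,g_0^{-1}\ga g_0\, i)$. The upper estimate in Lemma \ref{lem2.6} bounds this by $\log\mu(g_0^{-1}\ga g_0)$, and since $\mu(g)=\norm{g}_{\Frob}^2$ while the Frobenius norm obeys $\norm{g_0^{-1}\ga g_0}_{\Frob}\le\norm{g_0^{-1}}\,\norm{\ga}_{\Frob}\,\norm{g_0}$ with the operator norms of $g_0^{\pm1}$, we obtain $\mu(g_0^{-1}\ga g_0)\le C_0\,\mu(\ga)$ for a constant $C_0$ depending only on $z_0$. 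Therefore $d_0-d_r\le\log\mu(\ga)+O(1)$.

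The core of the argument is to extract a uniform drop at each tranche boundary. Let $\nu_1,\dots,\nu_k$ end at the indices $p_1<\cdots<p_k=r$. At each interior boundary $p_j$ the consecutive generators $\eta_{p_j},\eta_{p_j+1}$ lie in different tranches, so by the very definition of a tranche they are not parabolic elements of one and the same $\Ga$-conjugacy class; Lemma \ref{lem2.4} then yields $d_{p_j+1}\le d_{p_j-1}-c_2$. The main obstacle is that these two-step windows $[p_j-1,p_j+1]$ may overlap when a tranche has length one, so the drops cannot simply be summed. I would circumvent this by selecting every other boundary $p_1,p_3,p_5,\dots$: since two whole tranches separate consecutive selected boundaries one has $p_{2i+1}\ge p_{2i-1}+2$, so the corresponding windows meet in at most an endpoint, and the monotonicity of $(d_j)$ bridges the gaps. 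Chaining the inequalities over these $\ge\frac{k-1}{2}$ boundaries gives $\frac{k-1}{2}\,c_2\le d_{p_1-1}-d_{p_k+1}\le d_0-d_r$.

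Combining the two bounds yields $\frac{k-1}{2}\,c_2\le\log\mu(\ga)+O(1)$, that is, $k\le C_\Ga\(\log\mu(\ga)+1\)$ with $C_\Ga$ depending only on $\Ga$ and the points $z_0,z_1$ through $c_2$, $C_0$ and $g_0$. The only delicate point is the bookkeeping of the overlapping windows in the previous paragraph; everything else is a direct application of the preceding lemmas.
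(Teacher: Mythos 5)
Your proposal is correct and follows essentially the same route as the paper: one application of Lemma \ref{lem2.4} per pair of consecutive tranches (equivalently, at every other tranche boundary) yields a total drop of at least $\lfloor k/2\rfloor c_2$ in $d(\cdot,\ga z_1)$, which is then bounded above by $\log\mu(\ga)+O(1)$ via the triangle inequality and Lemma \ref{lem2.6}. The only differences are cosmetic --- you route the triangle inequality through $d(z_0,\ga z_0)$ and a conjugation of $\mu$ instead of through the point $i$, and you spell out the overlapping-window bookkeeping that the paper compresses into its chain of inequalities.
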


\begin{proof}
Let $\ga=\nu_1\cdots\nu_k$ be the presentation in tranches.
Using Lemma \ref{lem2.4} we get
$$
d(z_0,\ga z_1)\ge d(\nu_2\nu_1z_0,\ga z_1)+c_2\ge\dots\ge \left[\frac{k}2\right]c_2+d(\ga z_0,\ga z_1)=\left[\frac{k}2\right]c_2+d(z_0,z_1).
$$
The triangle inequality yields
$$
d(z_0,\ga z_1)\le d(z_0,i)+d(i,\ga i)+d(\ga i,\ga z_1)
$$
and thus
$$
k\le C_\Ga\(d(z_0,i)+d(i,\ga i)+d(i,z_1)\).
$$
The claim follows with Lemma \ref{lem2.6}.
\end{proof}

\begin{definition}
For $z,w\in\H$ let
$$
u(z,w)=\frac{|z-w|^2}{\Im z\Im w}.
$$
This number is connected with the hyperbolic distance via the formula
$$
\cosh\(d(z,w)\)=1+\frac12 u(z,w).
$$
\end{definition}

\begin{proposition}\label{prop2.9}
Let $\chi:\Ga\to\GL(V)$ be a finite dimensional representation which is unitary at the cusps.
Then there exists $\sigma_0>1$, such that for all $z,w\in\H$ we have
$$
\norm{\chi(\ga)}=O\(\mu(\ga)^{\sigma_0-1}\)
$$ 
as well as
$$
\norm{\chi(\ga)}=O\((u(z,\ga w)+1)^{\sigma_0-1}\),
$$
where the implied constant depends continuously on $z,w$, but not on $\ga$.
\end{proposition}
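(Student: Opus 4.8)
The plan is to combine the submultiplicativity of the operator norm with the tranche decomposition $\ga=\nu_1\cdots\nu_k$ and with the bound on the number of tranches from Proposition \ref{prop2.7}. Since $\norm{\chi(\ga)}\le\prod_{j=1}^k\norm{\chi(\nu_j)}$, it is enough to bound every factor $\norm{\chi(\nu_j)}$ by one constant $M$ that is independent of $j$ and of $\ga$; the product is then at most $M^k$, and Proposition \ref{prop2.7} turns the exponent into a power of $\log\mu(\ga)$.

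There are two kinds of tranches. A non-parabolic tranche is a single generator $\eta\in\{\ga_1,\dots,\ga_l\}$, and as there are only finitely many generators we have $\norm{\chi(\eta)}\le M_0:=\max_j\norm{\chi(\ga_j)}$. The delicate case, which I expect to be the main obstacle, is a parabolic tranche $\nu=\eta_s\cdots\eta_{s+m}$, because $m$ is unbounded (the geodesic defining the normal presentation may wind around a cusp arbitrarily often), so estimating factor by factor would produce a bound growing with $m$. The resolution is exactly the geometric content behind the definition of a tranche: the consecutive generators $\eta_s,\dots,\eta_{s+m}$ all lie in a single parabolic conjugacy class attached to one cusp $\a$, and the motion they describe is motion inside the stabilizer $\Ga_\a$. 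Granting that $\nu$ equals an element of $\Ga_\a$ up to factors drawn from a fixed finite set, the relation $\norm{\chi(\ga_\a\ga)}=\norm{\chi(\ga)}$ recorded above, together with the unitarity of $\chi$ on $\Ga_\a$, shows that $\norm{\chi(\nu)}\le M_1$ for some constant $M_1$ independent of the length $m$. Setting $M:=\max\{M_0,M_1,1\}$ yields the desired uniform tranche bound, and establishing this reduction to a cusp stabilizer is the one step that genuinely uses the hypothesis that $\chi$ is unitary at the cusps.

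With the tranche bound available, Proposition \ref{prop2.7} gives
\[
\norm{\chi(\ga)}\le M^k\le M^{C_\Ga(\log\mu(\ga)+1)}=M^{C_\Ga}\,\mu(\ga)^{C_\Ga\log M}.
\]
Choosing any $\sigma_0>1$ with $\sigma_0-1\ge C_\Ga\log M$ gives the first estimate $\norm{\chi(\ga)}=O(\mu(\ga)^{\sigma_0-1})$ with a constant independent of $\ga$; if $M\le1$ the norm is already bounded and, since $\mu(\ga)\ge2$, the estimate holds for any $\sigma_0>1$.

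Finally I would deduce the second estimate from the first by passing from $\mu(\ga)$ to $u(z,\ga w)$. By Lemma \ref{lem2.6} we have $\mu(\ga)\le2\exp(d(i,\ga i))$, and the triangle inequality together with $d(\ga w,\ga i)=d(w,i)$ gives $d(i,\ga i)\le d(z,\ga w)+d(i,z)+d(i,w)$. Using $\exp(t)\le2\cosh(t)$ and $\cosh(d(z,\ga w))=1+\frac12u(z,\ga w)$ one gets $\exp(d(z,\ga w))\le2(u(z,\ga w)+1)$, so that
\[
\mu(\ga)\le 4\exp\(d(i,z)+d(i,w)\)\,(u(z,\ga w)+1).
\]
Raising to the power $\sigma_0-1$ and substituting into the first estimate produces $\norm{\chi(\ga)}=O\((u(z,\ga w)+1)^{\sigma_0-1}\)$ with implied constant the product of the constant from the first estimate and $4^{\sigma_0-1}\exp\((\sigma_0-1)(d(i,z)+d(i,w))\)$, which is continuous in $z,w$ and independent of $\ga$, as required.
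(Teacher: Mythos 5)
Your proposal is correct and follows essentially the same route as the paper: tranche decomposition plus submultiplicativity of the operator norm, Proposition \ref{prop2.7} to bound the number of tranches by $O(\log\mu(\ga)+1)$, and then Lemma \ref{lem2.6} with the triangle inequality and $\cosh(d)=1+\frac12 u$ for the second estimate. You are in fact more explicit than the paper about the one delicate point, namely that a parabolic tranche of arbitrary length has uniformly bounded $\norm{\chi(\nu)}$ because it reduces to an element of a cusp stabilizer on which $\chi$ is unitary; the paper's bound $\norm{\chi(\ga)}\le K^k$ uses this silently.
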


\begin{proof}
Let $\ga\in\Ga$ and let $\ga=\nu_1\cdots\nu_k$ be its presentation in tranches.
Then we estimate
$$
\norm{\chi(\ga)}\le\prod_{j=1}^k\norm{\chi(\nu_j)}.
$$
Let $K$ be the maximum of $\norm{\chi(\ga_j)}$ for $j=1,\dots,k$. With Proposition \ref{prop2.7} we obtain 
$$
\norm{\chi(\ga)}\le K^k\le K^{C\log\mu(\ga)+C}=\mu(\ga)^{C\log K}K^C
$$
This implies the first claim.
Similar to Lemma \ref{lem2.6} one finds that there exists $c>0$, which can be chosen independently of $z$ in a compact set, such that
$$
\mu(\ga)\le c\exp(d(z,\ga z)).
$$
By the triangle inequality we get
\begin{align*}
\exp(d(z,\ga z))&\le\exp(d(z,\ga w)+d(\ga w,\ga z))\\
&= \exp(d(w,z))\exp(d(z,\ga w)).
\end{align*}
Since $\frac{\exp}2\le\cosh$ we arrive at $\mu(\ga)=O(u(z,\ga w)+1)$ and whence the claim.
\end{proof}

\begin{lemma}\label{lem2.10}
Assume that $\infty$ is a cusp of $\Ga$ of width one.
Then there exists a constant $C>0$ such that every coset in $\Ga_\infty\bs \Ga$ contains an element $\smat ab mn$ with
$$
a^2+b^2\le C (m^2+n^2).
$$
\end{lemma}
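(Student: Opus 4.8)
The plan is to exploit that passing to another element of the same coset only changes the top row of the matrix, while the bottom row $(m,n)$ is a coset invariant. Since $\infty$ has width one, $\Ga_\infty$ is generated modulo $\pm 1$ by $\smat 1101$, and left multiplication by $\smat 1k01$ sends $\smat abmn$ to $\smat{a+km}{b+kn}mn$. Thus for a fixed coset $\Ga_\infty\ga$ the pair $(m,n)$ is constant, while the top row runs through $\{(a+km,b+kn):k\in\Z\}$. So I must choose the integer $k$ making $(a+km)^2+(b+kn)^2$ as small as possible and then bound the result by a multiple of $m^2+n^2$.

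First I would treat this as a one-dimensional minimisation in $\R^2$. Writing $f(t)=\norm{(a,b)+t(m,n)}^2$, this is a quadratic in $t$ whose minimum over $t\in\R$ is the squared Euclidean distance of $(a,b)$ from the line $\R(m,n)$, that is $\frac{(an-bm)^2}{m^2+n^2}$. Since a matrix in $\PSL_2(\R)$ may be represented with determinant one, $an-bm=1$, so this minimum equals $\frac{1}{m^2+n^2}$. Choosing $k\in\Z$ nearest to the real minimiser $t^\ast$, so that $|k-t^\ast|\le\frac12$, and using $f(k)=f(t^\ast)+(m^2+n^2)(k-t^\ast)^2$, I obtain
$$
(a+km)^2+(b+kn)^2\le\frac{1}{m^2+n^2}+\tfrac14(m^2+n^2).
$$
It is here that width one enters, guaranteeing that the admissible shifts are by integers.

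It then remains to absorb the term $\frac{1}{m^2+n^2}$ into a multiple of $m^2+n^2$, for which it suffices to bound $m^2+n^2$ away from zero. For the trivial coset $m=0$, $n=\pm1$, and the identity representative already works; for every other coset $m\ne0$, and I claim $|m|\ge c_0$ for some $c_0>0$ independent of $\ga$. This lower bound is the main obstacle and is where I expect the real content to lie. I would derive it from the embedded horoball at the cusp: there is $Y_0>0$ such that $B=\{z\in\H:\Im z>Y_0\}$ is precisely invariant under $\Ga_\infty$, so that $\ga B\cap B=\emptyset$ for $\ga\notin\Ga_\infty$. For such a $\ga$ with $m\ne0$ the image $\ga B$ is a Euclidean disk tangent to $\R$ at $a/m$ of diameter $\frac{1}{m^2Y_0}$, as one reads off from $\Im(\ga z)=\frac{\Im z}{|mz+n|^2}$. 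Disjointness from $B$ forces $\frac{1}{m^2Y_0}\le Y_0$, i.e. $|m|\ge 1/Y_0=:c_0$.

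Finally I would combine these estimates. From $|m|\ge c_0$ we get $m^2+n^2\ge m^2\ge c_0^2$, hence $\frac{1}{m^2+n^2}\le c_0^{-4}(m^2+n^2)$, and together with the displayed inequality this yields
$$
a^2+b^2\le C\,(m^2+n^2),\qquad C=\tfrac14+c_0^{-4},
$$
uniformly over all cosets, which is the assertion. The only nontrivial input is the existence of the precisely invariant horoball $B$, which is exactly the geometric meaning of $\infty$ being a cusp; everything else is the routine lattice-reduction computation sketched above.
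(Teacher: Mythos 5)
Your proposal is correct, and it follows the same basic strategy as the paper: use the width-one normalization to replace a coset representative $\smat{a}{b}{m}{n}$ by $\smat{a+km}{b+kn}{m}{n}$ for a well-chosen $k\in\Z$, i.e.\ reduce the top row against the bottom row. The execution differs in two ways worth noting. First, the paper reduces only the entry $a$ by division with remainder ($0\le a<|m|$) and then bounds $b$ from the determinant relation $an-bm=1$, whereas you minimize $a^2+b^2$ directly over the coset and compute the minimum exactly as $\frac{(an-bm)^2}{m^2+n^2}+ (m^2+n^2)(k-t^\ast)^2$; your version is cleaner and gives an explicit constant. Second, and more substantively, both arguments ultimately need $|m|$ bounded away from $0$ for $m\ne 0$: in the paper this is hidden in the final step $m^2+\frac{(m+1)^2n^2}{m^2}=O(m^2+n^2)$, which fails if $|m|$ can be arbitrarily small, while you isolate this point and supply the standard justification via the precisely invariant horoball at $\infty$ (equivalently, Shimizu's lemma, which for width one even gives $|m|\ge 1$). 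So your proof is not only valid but makes explicit an ingredient the paper's write-up glosses over; the only cost is the extra appeal to the horoball/Shimizu input, which is in any case standard for a lattice with a cusp at $\infty$.
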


\begin{proof}
Let $\smat\al\beta mn\in\Ga$ be given. We can assume $n>0$ and $m\ne 0$ for otherwise the statement is clear.
Then we find $q\in\Z$ such that $\al=mq+l$, $0\le l<|m|$.
The set $\ga=\smat 1{-q}01$ and $\ga \smat\al\beta mn=\smat abmn$.
Then, as $a=l$ and $an-bm=1$ we get
\begin{align*}
a^2+b^2&=l^2+\(\frac{ln-1}{m}\)^2\\
&\le m^2+\frac{(l+1)^2n^2}{m^2}\le m^2+\frac{(m+1)^2n^2}{m^2}=O(m^2+n^2).\mqed
\end{align*}\end{proof}

\begin{lemma}\label{lem2.11}
Let $\Ga$ have a cusp at $\infty$ of width one. Then with $\sigma_0>1$ as in Proposition \ref{prop2.9}, 
 for $\ga=\smat **cd\in\Ga$ one has $\norm{\chi(\ga)}, \norm{\chi(\ga^{-1})}=O((c^2+d^2)^{\sigma_0-1})$.
The implied constant depends on the representation $\chi$ but not on $\ga$.
\end{lemma}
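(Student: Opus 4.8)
The plan is to use the unitarity of $\chi$ on $\Ga_\infty$ from both sides: on the left to control $\norm{\chi(\ga)}$, and on the right to control $\norm{\chi(\ga^{-1})}$. Since the cusp $\infty$ has width one we have $\Ga_\infty=\{\pm\smat{1}{n}{0}{1}:n\in\Z\}$, and left multiplication $\smat{1}{n}{0}{1}\smat{a}{b}{c}{d}=\smat{a+nc}{b+nd}{c}{d}$ leaves the bottom row $(c,d)$ unchanged. As $\chi$ is unitary at $\infty$, the identity $\norm{\chi(\ga_\infty\ga)}=\norm{\chi(\ga)}$ noted at the start of this section shows that $\norm{\chi(\ga)}$ depends only on the coset $\Ga_\infty\ga$, and therefore only on $(c,d)$.

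First I would bound $\norm{\chi(\ga)}$. By Lemma \ref{lem2.10} the coset $\Ga_\infty\ga$ contains a representative $\ga'=\smat{a'}{b'}{c}{d}$ with $a'^2+b'^2\le C(c^2+d^2)$, so that $\mu(\ga')=a'^2+b'^2+c^2+d^2\le(C+1)(c^2+d^2)$. Since $\norm{\chi(\ga)}=\norm{\chi(\ga')}$, the first estimate of Proposition \ref{prop2.9} gives $\norm{\chi(\ga)}=O(\mu(\ga')^{\sigma_0-1})=O((c^2+d^2)^{\sigma_0-1})$.

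For the inverse the crucial point is that $\norm{\chi(\ga^{-1})}$ is \emph{also} a function of the coset $\Ga_\infty\ga$ alone. Indeed, replacing $\ga$ by $\ga_\infty\ga$ replaces $\ga^{-1}$ by $\ga^{-1}\ga_\infty^{-1}$; as $\chi(\ga_\infty^{-1})$ is unitary and right multiplication by a unitary operator preserves the operator norm, one has $\norm{\chi((\ga_\infty\ga)^{-1})}=\norm{\chi(\ga^{-1})\chi(\ga_\infty^{-1})}=\norm{\chi(\ga^{-1})}$. Hence I may pass to the same representative $\ga'$, giving $\norm{\chi(\ga^{-1})}=\norm{\chi(\ga'^{-1})}$. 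Because $\ga'^{-1}=\smat{d}{-b'}{-c}{a'}$ has the same squared entries as $\ga'$, we have $\mu(\ga'^{-1})=\mu(\ga')$, and applying Proposition \ref{prop2.9} to $\ga'^{-1}$ yields $\norm{\chi(\ga^{-1})}=O(\mu(\ga'^{-1})^{\sigma_0-1})=O((c^2+d^2)^{\sigma_0-1})$.

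I expect the only genuinely delicate step to be the recognition that $\norm{\chi(\ga^{-1})}$, and not merely $\norm{\chi(\ga)}$, is invariant under $\ga\mapsto\ga_\infty\ga$, which is what lets Lemma \ref{lem2.10} be applied to a single representative for both bounds; the remainder is a routine combination of that lemma, the inversion-invariance of $\mu$, and Proposition \ref{prop2.9}. The degenerate case $c=0$ (where $\ga\in\Ga_\infty$, both norms equal $1$, and $c^2+d^2=1$) requires no separate treatment, being already covered by applying Lemma \ref{lem2.10} to the coset $\Ga_\infty$.
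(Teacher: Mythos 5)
Your proof is correct and follows essentially the paper's route: reduce via Lemma \ref{lem2.10} to a coset representative with $\mu=O(c^2+d^2)$, use unitarity of $\chi$ on $\Ga_\infty$ to see both $\norm{\chi(\ga)}$ and $\norm{\chi(\ga^{-1})}$ depend only on the coset, and apply the first bound of Proposition \ref{prop2.9}. Your observation that $\mu(\ga'^{-1})=\mu(\ga')$ even lets you skip the paper's second invocation of Lemma \ref{lem2.10}, and your explicit justification of the right-multiplication invariance is a welcome clarification of the paper's terse argument.
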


\begin{proof}
The estimate for $\norm{\chi(\ga)}$ follows from the above lemma and Proposition \ref{prop2.9}.
For the estimate of $\norm{\chi(\ga^{-1})}$ note that if $\ga=\smat abcd$, then $\ga^{-1}=\smat d{-b}{-c}a$, then use Lemma \ref{lem2.10} a second time.
\end{proof}

\begin{lemma}\label{lem2.12}
For $c,d\in\R$ and $z=x+iy\in\C$ we have
$$
\(\frac{y^2}{1+|z|^2}\)(c^2+d^2)\le |cz+d|^2.
$$
In particular with Lemma \ref{lem2.11} we infer, that if $\Ga$ has a cusp at $\infty$ of width 1, then
$$
\norm{\chi(\ga)},\norm{\chi(\ga^{-1})}=O\(\Im(\ga z)^{1-\sigma_0}\(\frac {1+|z|^2}y\)^{\sigma_0-1}\),\quad z\in\H,\ \ga\in\Ga.
$$
\end{lemma}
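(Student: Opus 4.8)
The statement splits into the pointwise inequality and its combination with Lemma \ref{lem2.11}; the plan is to prove the former and then substitute it into the latter. For the inequality I would read $|cz+d|^2$ as a real quadratic form in $(c,d)$. Writing $z=x+iy$ one has $|cz+d|^2=c^2|z|^2+2xcd+d^2$, the form attached to the symmetric matrix $M=\left(\begin{smallmatrix}|z|^2 & x\\ x & 1\end{smallmatrix}\right)$, whose trace is $1+|z|^2$ and whose determinant is $|z|^2-x^2=y^2$. Hence $M$ is positive semidefinite with eigenvalues $\lambda_\pm$ satisfying $\lambda_++\lambda_-=1+|z|^2$ and $\lambda_+\lambda_-=y^2$. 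Since both eigenvalues are nonnegative and $\lambda_+$ is the larger, $0<\lambda_+\le 1+|z|^2$, so $\lambda_-=y^2/\lambda_+\ge y^2/(1+|z|^2)$. As $\lambda_-$ is the minimum of the form on the unit sphere,
$$
|cz+d|^2\ge\lambda_-(c^2+d^2)\ge\frac{y^2}{1+|z|^2}(c^2+d^2),
$$
which is the first assertion.

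For the ``in particular'' clause the one extra ingredient is the transformation law $\Im(\ga z)=y/|cz+d|^2$ for $\ga=\left(\begin{smallmatrix}* & *\\ c & d\end{smallmatrix}\right)$, which I would use to rewrite $|cz+d|^2=y/\Im(\ga z)$. Inserting this into the inequality just established gives
$$
c^2+d^2\le\frac{1+|z|^2}{y^2}\,|cz+d|^2=\frac{1+|z|^2}{y\,\Im(\ga z)}=\Im(\ga z)^{-1}\,\frac{1+|z|^2}{y}.
$$
Since $\sigma_0>1$, the exponent $\sigma_0-1$ is positive, so raising to this power preserves the inequality and yields $(c^2+d^2)^{\sigma_0-1}\le\Im(\ga z)^{1-\sigma_0}\bigl(\tfrac{1+|z|^2}{y}\bigr)^{\sigma_0-1}$. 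Lemma \ref{lem2.11} already bounds $\norm{\chi(\ga)}$ and $\norm{\chi(\ga^{-1})}$ by $O\bigl((c^2+d^2)^{\sigma_0-1}\bigr)$, and chaining the two estimates gives the stated bound.

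I do not anticipate a genuine obstacle: the lemma is elementary once framed correctly. The only place that needs a little care is the first step — recognizing the desired lower bound for $|cz+d|^2$ as the smallest-eigenvalue estimate for $M$, rather than clearing denominators and expanding, which is workable but messy (it ultimately collapses to the identity $(|z|^2-x^2)^2=y^4\ge 0$). The passage to the second assertion is then purely formal, relying only on the transformation law for $\Im(\ga z)$ and the monotonicity of $t\mapsto t^{\sigma_0-1}$ on $(0,\infty)$.
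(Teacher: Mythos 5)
Your proof is correct, and the second half (substituting $|cz+d|^2=y/\Im(\ga z)$ into the inequality, raising to the power $\sigma_0-1$, and chaining with Lemma \ref{lem2.11}) is exactly what the paper does. For the pointwise inequality itself you take a slightly different route: you read $|cz+d|^2$ as the quadratic form of $M=\left(\begin{smallmatrix}|z|^2 & x\\ x & 1\end{smallmatrix}\right)$, compute $\operatorname{tr}M=1+|z|^2$ and $\det M=y^2$, and bound the form below by its smallest eigenvalue $\lambda_-=y^2/\lambda_+\ge y^2/(1+|z|^2)$. The paper instead adds two one-line estimates, $|cz+d|^2\ge c^2y^2$ and $|z|^2\,|cz+d|^2=|c|z|^2+d\bar z|^2\ge d^2y^2$, to get $(1+|z|^2)|cz+d|^2\ge y^2(c^2+d^2)$ directly. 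The two arguments are really the same fact seen from different angles (your determinant $y^2$ is exactly what makes the paper's two estimates sum correctly); the paper's version is shorter and avoids any discussion of eigenvalues, while yours makes transparent that the constant $y^2/(1+|z|^2)$ is essentially the sharp one, being a lower bound for $\lambda_-$. One microscopic point: your step $\lambda_-=y^2/\lambda_+$ presupposes $\lambda_+\ne 0$, i.e.\ $y\ne 0$; for $y=0$ the claimed inequality is trivially $0\le |cz+d|^2$, so nothing is lost, but it is worth a word since the lemma is stated for arbitrary $z\in\C$.
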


\begin{proof}
Observe that $|cz+d|^2\ge y^2c^2$ and $|\ol z|^2|cz+d|^2=|c|z|^2+dz|^2\ge d^2y^2$, so that $|cz+d|^2\ge \frac{y^2(c^2+d^2)}{1+|z|^2}.$ This yields
$$ 
\norm{\chi(\ga)},\norm{\chi(\ga^{-1})}=O\(\(\frac{y^2}{1+|z|^2}\)^{1-\sigma_0} |cz+d|^{2(\sigma_0-1)}\),
$$ which implies the claim.
\end{proof}

\section{Eisenstein series}

\begin{definition}
Let $\psi\in C_c^\infty(\R_{>0})$. For a cusp $\a$ we define the \e{incomplete Eisenstein series}
$$
E_\a(z|\psi)=\sum_{\ga\in \Ga_\a\bs\Ga}\psi(\Im(\sigma_\a^{-1}\ga z))\,\chi(\ga^{-1})P_\a.
$$
\end{definition}

For $v\in V$ consider the function $E_\a(\cdot|\psi)v:\H\to V$.
Since $\psi$ has compact support, the function $E_\a(\cdot|\psi)v$ is bounded on the fundamental domain $F$, hence an element of $L^2(F,\chi)=L^2(\Ga\bs\H,\chi)$.

We let $\CE_\a(\Ga\bs\H,\chi)$ denote the closure of all incomplete Eisenstein series in $L^2(\Ga\bs\H,\chi)$, where $\psi$ varies in $C_c^\infty(\R_{>0})$ and $v$ ranges over $V$.
Then we set
$$
\CE(\Ga\bs\H,\chi)=\sum_\a\CE_\a(\Ga\bs\H,\chi).
$$
Note that $\CE_\a(\Ga\bs\H,\chi)=0$ if $\Eig(\chi(\Ga_\a),1)=0$, where for any linear map operator $T$ and $\la\in\C$ we write $\Eig(T,\la)$ for the $\la$-eigenspace of $T$.

\begin{definition}
For $s\in\C$ and a cusp $\a$ we define the \e{Eisenstein series} by
$$
E_\a(z,s,\chi)=\sum_{\ga\in\Ga_\a\bs\Ga}\Im(\sigma_\a^{-1}\ga z)^s\,\chi(\ga^{-1})\, P_\a.
$$
\end{definition}

\begin{proposition}\label{prop3.3}
Let $\sigma_0$ be as in Proposition \ref{prop2.9}.
Then for any cusp $\a$, the Eisenstein series $E_\a(z,s,\chi)$ converges locally uniformly absolutely in the half plane $\Re(s)>\sigma_0$.
\end{proposition}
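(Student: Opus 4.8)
The plan is to estimate the operator norm of each summand and dominate the series term-by-term by a scalar Eisenstein series of shifted exponent. Since $P_\a$ is an orthogonal projection, $\norm{P_\a}\le 1$, so the norm of the $\ga$-summand of $E_\a(z,s,\chi)$ is at most $\Im(\sigma_\a^{-1}\ga z)^{\Re(s)}\norm{\chi(\ga^{-1})}$. Hence it suffices to establish a bound of the form $\norm{\chi(\ga^{-1})}\le C(z)\,\Im(\sigma_\a^{-1}\ga z)^{1-\sigma_0}$, with $C$ continuous on $\H$ and independent of $\ga$; the $\ga$-summand is then dominated by $C(z)\,\Im(\sigma_\a^{-1}\ga z)^{\Re(s)+1-\sigma_0}$, and everything is reduced to the convergence of $\sum_{\ga\in\Ga_\a\bs\Ga}\Im(\sigma_\a^{-1}\ga z)^{\Re(s)+1-\sigma_0}$. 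Note that this majorant is well defined on cosets: by unitarity at the cusps one has $\norm{\chi((\ga_\a\ga)^{-1})}=\norm{\chi(\ga^{-1})}$ for $\ga_\a\in\Ga_\a$, while $\Im(\sigma_\a^{-1}\ga z)$ is visibly $\Ga_\a$-invariant.

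To produce the required bound I would transport the situation to the cusp $\infty$. Put $\Ga^\a=\sigma_\a^{-1}\Ga\sigma_\a$, define $\chi^\a\colon\Ga^\a\to\GL(V)$ by $\chi^\a(h)=\chi(\sigma_\a h\sigma_\a^{-1})$, and set $w=\sigma_\a^{-1}z$. Then $\Ga^\a$ has $\infty$ as a cusp of width one, $\chi^\a$ is again unitary at the cusps, the map $\ga\mapsto\sigma_\a^{-1}\ga\sigma_\a$ carries $\Ga_\a\bs\Ga$ onto $\Ga^\a_\infty\bs\Ga^\a$, and $\Im(\sigma_\a^{-1}\ga z)=\Im\!\big((\sigma_\a^{-1}\ga\sigma_\a)\,w\big)$. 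Applying Lemma~\ref{lem2.12} to the pair $(\Ga^\a,\chi^\a)$, the group element $\sigma_\a^{-1}\ga\sigma_\a$ and the point $w$, and using $\chi^\a\!\big((\sigma_\a^{-1}\ga\sigma_\a)^{-1}\big)=\chi(\ga^{-1})$, gives precisely
$$
\norm{\chi(\ga^{-1})}=O\!\left(\Im(\sigma_\a^{-1}\ga z)^{1-\sigma_0}\left(\frac{1+|w|^2}{\Im w}\right)^{\sigma_0-1}\right),
$$
the bracketed factor being the sought continuous function $C(z)$ of $z=\sigma_\a w$. The constant $\sigma_0$ is unchanged under this conjugation, since the growth rate in Proposition~\ref{prop2.9} depends only on the isometry type of $\Ga\bs\H$ and on $\max_j\norm{\chi(\ga_j)}$ over generators, both of which are preserved; alternatively one simply takes the maximum of the finitely many constants attached to the cusps.

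Plugging this in, the $\ga$-summand of $E_\a(z,s,\chi)$ is dominated by $C(z)\,\Im(\sigma_\a^{-1}\ga z)^{\Re(s)+1-\sigma_0}$, so summation over $\Ga_\a\bs\Ga$ bounds the series by $C(z)$ times the classical scalar Eisenstein series of exponent $\Re(s)+1-\sigma_0$. The latter converges as soon as $\Re(s)+1-\sigma_0>1$, that is for $\Re(s)>\sigma_0$, which yields absolute convergence on that half-plane. Local uniform convergence in $z$ follows because $C$ is continuous, hence bounded on compacta; uniformity in $s$ on each half-plane $\Re(s)\ge\sigma_0+\delta$ follows from the same comparison, the scalar majorant then having exponent $\ge 1+\delta$.

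The substantive point --- and the only place where any work is hidden --- is the norm estimate invoked in the second paragraph, which is exactly the content of Lemmas~\ref{lem2.10}--\ref{lem2.12}. The delicate feature there is that $\norm{\chi(\ga^{-1})}$ has to be controlled uniformly across an entire coset $\Ga_\a\ga$; this is where unitarity at the cusp is indispensable, since it lets one pass to the distinguished representative of Lemma~\ref{lem2.10} whose top row is dominated by its bottom row, so that $\mu(\ga)$ becomes comparable to $c^2+d^2$ and hence to $\Im(\sigma_\a^{-1}\ga z)^{-1}$. Granting that estimate, the whole argument collapses to the scalar case, and the one genuinely arithmetic input is the strict inequality $\sigma_0>1$, which is what allows the shifted exponent $\Re(s)+1-\sigma_0$ to be pushed past the abscissa $1$ of convergence of the classical series.
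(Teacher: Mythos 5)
Your argument is correct and is essentially the paper's own proof: conjugate the cusp to $\infty$, invoke Lemma~\ref{lem2.12} to bound $\norm{\chi(\ga^{-1})}$ by $\Im(\ga z)^{1-\sigma_0}$ times a continuous function of $z$, and dominate the series by the classical Eisenstein series of exponent $\Re(s)+1-\sigma_0>1$. You merely spell out details the paper leaves implicit (well-definedness on cosets via unitarity at the cusps, stability of $\sigma_0$ under conjugation, local uniformity), all of which are handled correctly.
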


\begin{proof}
We replace $\a$ with $\infty$ and $\Ga$ with $\sigma_\a^{-1}\Ga\sigma_\a$. Then by Lemma \ref{lem2.12} we have
\begin{align*}
\norm{\Im(\ga z)^s\chi(\ga^{-1})}
&\le \Im(\ga z)^{\Re(s)+1-\sigma_0}\(\frac {1+|z|^2}y\)^{\sigma_0-1}.
\end{align*}
The claim follows.
\end{proof}

In order to estimate the growth of $E_\a(z,s,\chi)$ near a cusp, we need to study its Fourier expansion. We will consider the more general case $E_\a(z|\psi)$ and later set $\psi(t)=t^s$.

\begin{definition}
We write
$$
\ul n(k)=\mat 1k01
$$
and we let $N_\Z$ denote the subgroup of $\PSL_2(\R)$ generated by $\ul n(1)$.
\end{definition}

The integral
$$
K_s(y)\ =\ \frac12\int_0^\infty e^{-y(t+t^{-1})/2}t^s\frac{dt}t
$$
converges locally uniformly absolutely for $y>0$ and $s\in\C$.
The so-defined function $K_s$ is called the \e{$K$-Bessel function}.
It satisfies the estimate
$$
|K_s(y)|\ \le\ e^{-y/2} K_{\Re(s)}(2),\quad\text{ if } y>4.
$$
We also note that the integrand in the Bessel integral is invariant under 
$t\mapsto t^{-1}$, $s\mapsto -s$, so that 
$$
K_{-s}(y)\ =\  K_s(y).
$$

\begin{lemma}
Let $\a,\b$ be cusps of $\Ga$.
We have the disjoint union
$$
\sigma_\a^{-1}\Ga\sigma_\b=\delta_{\a,\b}\Om_\infty\sqcup\bigsqcup_{c>0}\bigsqcup_{d(c)}\Om_{(c,d)},
$$
where $\Om_\infty=N_\Z\om_\infty N_\Z$ and $\Om_{(c,d)}=N_\Z\om_{(c,d)}N_\Z$ with some element $\om_\infty=\smat 1*\ 1\in \sigma_\a\Ga\sigma_\b$ and some $\om_{(c,d)}=\smat **cd\in \sigma_\a^{-1}\Ga\sigma_\b$.

Here $c$ runs over all real numbers $>0$ for which such $\om_{(c,d)}$ exists and $d$ runs modulo $c$.
\end{lemma}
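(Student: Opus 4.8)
The plan is to classify the $N_\Z$-double cosets inside the set $S=\sigma_\a^{-1}\Ga\sigma_\b$ by the bottom row $(c,d)$ of a representative. First I would record that $N_\Z$ preserves $S$ on both sides: since $\sigma_\a^{-1}\Ga_\a\sigma_\a=N_\Z$ and likewise $\sigma_\b^{-1}\Ga_\b\sigma_\b=N_\Z$, we get $N_\Z S=\sigma_\a^{-1}\Ga_\a\Ga\sigma_\b=S$ and $SN_\Z=\sigma_\a^{-1}\Ga\Ga_\b\sigma_\b=S$. It follows that every double coset $N_\Z g N_\Z$ with $g\in S$ is contained in $S$, and, as double cosets always either coincide or are disjoint, they partition $S$; this is the disjoint-union assertion, so it only remains to identify the index set.

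Next I would track the two-sided action on the entries of a lift $g=\smat abcd\in\operatorname{SL}_2(\R)$. Left multiplication $\ul n(k)g=\smat {a+kc}{b+kd}cd$ leaves the bottom row $(c,d)$ unchanged, whereas right multiplication $g\,\ul n(k)=\smat a{ak+b}c{ck+d}$ fixes $c$ and sends $d\mapsto d+kc$. Hence $c$ is invariant along the whole double coset, and for $c\neq0$ the residue $d\bmod c\Z$ is a further invariant; passing to $\PSL_2$ identifies $(c,d)$ with $(-c,-d)$, so I may and do normalize $c>0$. This produces a well-defined map from double cosets to pairs $(c,d\bmod c)$ with $c>0$.

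The crux is the converse, that this map is injective, and here the key claim is that two elements of $S$ with the same bottom row differ by a left factor in $N_\Z$. If $g,g'\in S$ share the bottom row $(c,d)$, a short computation in $\operatorname{SL}_2(\R)$ gives $g'g^{-1}=\smat 1t01$ for some $t\in\R$. Writing $g=\sigma_\a^{-1}\ga\sigma_\b$ and $g'=\sigma_\a^{-1}\ga'\sigma_\b$, we have $g'g^{-1}=\sigma_\a^{-1}(\ga'\ga^{-1})\sigma_\a$, and since $g'g^{-1}$ fixes $\infty$ the element $\ga'\ga^{-1}$ fixes the cusp $\a$, i.e.\ $\ga'\ga^{-1}\in\Ga_\a$. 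Therefore $g'g^{-1}\in\sigma_\a^{-1}\Ga_\a\sigma_\a=N_\Z$, which forces $t\in\Z$. This is exactly where the width-one normalization of the cusp and the discreteness of $\Ga$ are used, and it is the step I expect to be the main obstacle; once it is in hand, combining it with the shift $d\mapsto d+kc$ from the previous paragraph shows that elements with the same $(c,d\bmod c)$ lie in a single double coset, yielding the cosets $\Om_{(c,d)}$.

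Finally I would treat the case $c=0$ separately. An element $g\in S$ with $c=0$ fixes $\infty$, so $\ga\b=\a$ and the cusps $\a,\b$ are $\Ga$-equivalent; for representatives of distinct cusp classes this is impossible, which accounts for the factor $\delta_{\a,\b}$. When $\a=\b$ we have $\sigma_\a=\sigma_\b$, and the $c=0$ part of $S$ is precisely $\sigma_\a^{-1}\Ga_\a\sigma_\a=N_\Z$, a single double coset $\Om_\infty=N_\Z\om_\infty N_\Z$ represented by an element of $N_\Z$ of the form $\smat 1*\ 1$. Assembling the $c=0$ and $c>0$ contributions gives the stated decomposition.
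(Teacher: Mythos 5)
Your argument is correct. Note, however, that the paper does not prove this lemma at all: its entire ``proof'' is the citation ``This is Theorem 2.7 in \cite{Iwaniec}.'' What you have written is, in essence, a correct self-contained version of that standard argument: the two-sided $N_\Z$-invariance of $\sigma_\a^{-1}\Ga\sigma_\b$ gives the partition into double cosets; the bottom row $(c,d)$ is a left invariant and $d\bmod c\Z$ a two-sided invariant; and the key converse step --- that two elements with the same bottom row satisfy $g'g^{-1}=\smat 1t{\ }1$ with $g'g^{-1}\in\sigma_\a^{-1}\Ga_\a\sigma_\a=N_\Z$, forcing $t\in\Z$ --- is exactly where the width-one normalization $\sigma_\a^{-1}\Ga_\a\sigma_\a=\pm\smat 1\Z{\ }1$ enters. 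Your treatment of the $c=0$ stratum (nonempty only when $\a$ and $\b$ represent the same cusp class, in which case it is the single coset $N_\Z$) is also the right one. The only caveat worth recording is the standing convention that $\a,\b$ range over a fixed set of \emph{inequivalent} cusp representatives, which you implicitly invoke when concluding that $c=0$ forces $\a=\b$; with that convention made explicit, your proof is complete and matches the cited source.
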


\begin{proof}
This is Theorem 2.7 in \cite{Iwaniec}.
\end{proof}

We now derive the Fourier expansion of the Eisenstein series.
For $\ga\in \sigma_\a^{-1}\Ga\sigma_\b$ we write $\eta(\ga)=\chi(\sigma_\a\ga\sigma_\b^{-1})$.
Writing $\tilde\psi(z)=\psi(\Im(z))$, the double coset decomposition gives
\begin{align*}
E_\a(\sigma_\b z|\psi)
&= \sum_{\ga\in \Ga_\a\bs\Ga}\tilde\psi(\sigma_\a^{-1}\ga\sigma_\b z)\,\chi(\ga^{-1})\,P_\a\\
&= \sum_{\ga\in (\sigma_\a^{-1}\Ga\sigma_a)_\infty\bs \sigma_\a^{-1}\Ga\sigma_\b}\tilde\psi(\ga z)\,\eta(\ga^{-1})\,P_\a\\
&= \delta_{\a,\b}\tilde\psi(y)P_\a+\sum_{c>0}\sum_{d(c)}\(\sum_{k\in\Z} \tilde\psi(\om_{(c,d)}(z+k))\,\chi(\ga_\b)^{-k}\)\eta(\om_{(c,d)}^{-1})\,P_\a \\
\end{align*}
where $\ga_\b$ is a generator of $\Ga_\b$ and so $\chi(\ga_\b)$ is a unitary automorphism of $V$.
So if $1=e(\nu_1),e(\nu_2),\dots,e(\nu_{k(\b)})$ are the eigenvalues where $e(x)=e^{2\pi ix}$ and $\nu_j\in [0,1)$ and $P_1=P_\b,P_2,\dots,P_{k(\b)}$ are the corresponding projections to the eigenspaces, then
$$
\chi(\ga_\b)^{-1}=\sum_{j=1}^{k(\b)} e(-\nu_j)P_j.
$$
Here we use the convention that we list the eigenvalue $1=e(\nu_1)$ even if it doesn't occur, i.e., even in the case when $P_1=P_\b=0$.
We get that $E_\a(\sigma_\b z|\psi)$ equals
$$
\delta_{\a,\b}\tilde\psi(y)P_\a+\sum_{c>0}\sum_{d(c)}
\(\sum_{j=1}^{k(\b)}\sum_{k\in\Z} \tilde\psi(\om_{(c,d)}(z+k))\,e(-k\nu_j)P_j\)\eta(\om_{(c,d)}^{-1})\,P_\a
$$
By the Poisson Summation Formula we get
$$
\sum_{k\in\Z}  \tilde\psi(\om_{(c,d)}(z+k))e(-k\nu_j) P_j
=\sum_{k\in\Z}\int_\R e(t\nu_j)\tilde\psi(\om_{(c,d)}(z+t))e(kt)P_j\,dt.
$$
Writing $\om_{(c,d)}=\smat abcd$ and using $ad-bc=1$ we get $\om_{(c,d)}(z+t)=\frac ac-\frac1{c^2(t+x+iy+d/c)}$.
So that the change of variable $t\mapsto t-x-\frac dc$ yields
$$
\sum_{k\in\Z}e\((k-\nu_j)\(x+\frac dc\)\)\int_\R\psi\(\frac{yc^{-2}}{t^2+y^2}\)e((k+\nu_j)t)\,dt\,P_j.
$$
Hence
$E_\a(\sigma_\b z|\psi)$ equals $\delta_{\a,\b}\psi(y)P_\a$ plus
\begin{multline*}
\sum_{j=1}^{k(\b)}\sum_{k\in\Z}e((k-\nu_j)x)\\
\sum_{c>0}P_j\,\CS_{\a,\b}(k-\nu_j,c,\chi)\int_\R\psi\(\frac{yc^{-2}}{t^2+y^2}\)e((k+\nu_j)t)\,dt\,P_\a,
\end{multline*}
where $\CS_{\a,\b}(r,c,\chi)$ is the \e{Kloosterman sum}
$$
\CS_{\a,\b}(r,c,\chi)
=\sum_{d(c)}e\(r\frac dc\)\eta(\om_{c,d}^{-1}).
$$
We now specialize to the case $\psi(y)=y^s$.
From \cite{Iwaniec} p.205 we take
$$
\int_\R(t^2+y^2)^{-s}\,dt=\pi^{\frac12}\frac{\Ga(s-\frac12)}{\Ga(s)}y^{1-2s}
$$
and for $r\in\R^\times$,
$$
\int_\R(t^2+y^2)^{-s}e(-rt)\,dt=\frac{2\pi^s}{\Ga(s)}\(\frac{|r|}y\)^{s-\frac12}K_{s-\frac12}(2\pi |r|y).
$$
With these notations we have proven the following theorem.

\begin{theorem}\label{thm3.6}
Let $\a,\b$ be cusps of $\Ga$ and let $s$ be in $\C$ with $\Re(s)>\sigma_0$, where $\sigma_0$ is as in Proposition \ref{prop2.9}. Then we have
\begin{align*}
E_\a(\sigma_\b z,s,\chi)&=\delta_{\a,\b} y^sP_\a+\varphi_{\a,\b}(s)y^{1-s}\\
&+ \sum_{k\ne 0}\varphi_{\a,\b,1}(k,s)W_s(kz)\\
&+\sum_{j=2}^{k(\b)}\sum_{k\in\Z}\varphi_{\a,\b,j}(k-\nu_j,s)W_s((k-\nu_j)z),
\end{align*}
where
\begin{align*}
\varphi_{\a,\b}(s)&=\pi^{\frac12}\frac{\Ga(1-\frac12)}{\Ga(s)}P_\a\sum_{c>0}c^{-2s}\CS_{\a,\b}(0,c,\chi)\,P_\a,\\
\varphi_{\a,\b,j}(r,s)&=\frac{\pi^s}{\Ga(s)}|r|^{s-1}P_j\sum_{c>0}c^{-2s}\CS_{\a,\b}(r,c,\chi)\,P_\a,
\end{align*}
and $W_s(z)$ is the \e{Whittaker function}
$$
W_s(z)=2y^{\frac12}K_{s-\frac12}(2\pi y)e(x).
$$
\end{theorem}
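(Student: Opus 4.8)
The plan is to compute the Fourier expansion of the incomplete Eisenstein series $E_\a(\sigma_\b z|\psi)$ first and to specialize $\psi(t)=t^s$ only at the very end. The starting point is to rewrite the defining sum over $\Ga_\a\bs\Ga$ in the coordinate $\sigma_\b z$: after substitution, absorbing the two scaling matrices into the twisted representation $\eta(\ga)=\chi(\sigma_\a\ga\sigma_\b^{-1})$ turns it into a sum over $(\sigma_\a^{-1}\Ga\sigma_\a)_\infty\bs\sigma_\a^{-1}\Ga\sigma_\b$. I would then invoke the double coset decomposition
$$
\sigma_\a^{-1}\Ga\sigma_\b=\delta_{\a,\b}\Om_\infty\sqcup\bigsqcup_{c>0}\bigsqcup_{d(c)}\Om_{(c,d)}.
$$
The coset $\Om_\infty$ is present only when $\a=\b$ and yields the leading term $\delta_{\a,\b}y^sP_\a$, while each $\Om_{(c,d)}=N_\Z\om_{(c,d)}N_\Z$ contributes a sum that remains invariant under the right $N_\Z$-action, that is, a sum over integer translates $z\mapsto z+k$.

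For these remaining double cosets the right $N_\Z$-action enters through the unitary automorphism $\chi(\ga_\b)$, so I would diagonalize it, writing $\chi(\ga_\b)^{-1}=\sum_{j=1}^{k(\b)}e(-\nu_j)P_j$ with eigenvalues $e(\nu_j)$ and eigenprojections $P_j$, keeping $\nu_1=0$ (eigenspace $P_1=P_\b$) in the list even when it does not occur. This turns the inner sum into $\sum_j\sum_{k\in\Z}\tilde\psi(\om_{(c,d)}(z+k))\,e(-k\nu_j)P_j$, to which Poisson summation applies: it converts the lattice sum into a Fourier series whose coefficients are the integrals $\int_\R\tilde\psi(\om_{(c,d)}(z+t))\,e((k+\nu_j)t)\,dt$.

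The core step is then an explicit computation. Using $ad-bc=1$ one has $\om_{(c,d)}(z+t)=\frac ac-\frac1{c^2(t+x+iy+d/c)}$, hence $\Im\,\om_{(c,d)}(z+t)=\frac{yc^{-2}}{(t+x+d/c)^2+y^2}$; the change of variables $t\mapsto t-x-\frac dc$ factors the dependence on $x$ and $d/c$ into phases, and summing the phases $e(r\,d/c)$ over $d\bmod c$ produces exactly the Kloosterman sums $\CS_{\a,\b}(r,c,\chi)$ with $r=k-\nu_j$. Specializing $\psi(t)=t^s$, each coefficient integral becomes $y^sc^{-2s}\int_\R(t^2+y^2)^{-s}e((k+\nu_j)t)\,dt$; the two integral evaluations quoted from \cite{Iwaniec} then turn the frequency-zero term ($k=0$, $j=1$) into the Gamma factor times $y^{1-2s}$, i.e.\ $\varphi_{\a,\b}(s)y^{1-s}$, and every nonzero frequency into a $K_{s-\frac12}$-Bessel factor, which reassembles into the Whittaker functions $W_s$ with the stated coefficients $\varphi_{\a,\b,j}(r,s)$. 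Separating the $j=1$ integer frequencies from the $j\ge2$ shifted frequencies yields the stated form of the theorem.

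The step I expect to demand the most care is the justification of the interchanges rather than the bookkeeping: one must legitimize both the Poisson summation and the rearrangement of the $c$-sum, the $d$-sum, the $k$-sum and the integral. For $\Re(s)>\sigma_0$ this is underwritten by the locally uniform absolute convergence from Proposition \ref{prop3.3} together with the rapid decay of $\tilde\psi$ (and, after specialization, the Bessel decay $|K_s(y)|\le e^{-y/2}K_{\Re(s)}(2)$ for $y>4$), which guarantees that all the rearranged series and integrals converge absolutely and that Poisson summation is valid for the Schwartz-type integrand $t\mapsto\tilde\psi(\om_{(c,d)}(z+t))$.
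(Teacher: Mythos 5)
Your proposal follows essentially the same route as the paper: the incomplete Eisenstein series, the double coset decomposition of $\sigma_\a^{-1}\Ga\sigma_\b$, diagonalization of the unitary automorphism $\chi(\ga_\b)$, Poisson summation, the change of variables producing the Kloosterman sums, and finally the specialization $\psi(t)=t^s$ with the two integral evaluations from Iwaniec. Your added attention to justifying the interchanges via absolute convergence for $\Re(s)>\sigma_0$ is a reasonable supplement to what the paper leaves implicit.
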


\begin{proposition}\label{prop3.7}
For $s$ with $\Re(s)>\sigma_0$, where $\sigma_0$ is as in Proposition \ref{prop2.9}, we have
$$
E_\a(\sigma_\b z,s,\chi)=\delta_{\a,\b} y^sP_\a+\varphi_{\a,\b}(s)y^{1-s}+O(e^{-\beta y})
$$
as $y\to\infty$, where $0<\beta<\min_{j\ge 2}\nu_j$ is arbitrary.
The implied constant depends on $\Ga$, $\chi$ and $s\in\C$ only. It can be chosen to vary continuously in $s$.
\end{proposition}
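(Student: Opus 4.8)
The plan is to read the estimate straight off the Fourier expansion of Theorem \ref{thm3.6}. The two terms $\delta_{\a,\b}y^sP_\a$ and $\varphi_{\a,\b}(s)y^{1-s}$ are exactly the asserted main part, so it remains to show that every other term decays exponentially as $y\to\infty$. All of these are built from the Whittaker function $W_s(mz)=2(|m|y)^{1/2}K_{s-1/2}(2\pi|m|y)e(mx)$, the frequencies being $m=k$ with $k\ne0$ in the first sum and $m=k-\nu_j$ with $j\ge2$, $k\in\Z$ in the second. Since $\nu_j\in(0,1)$ for $j\ge2$, each such frequency satisfies $|m|\ge\mu_0:=\min\big(1,\min_{j\ge2}\min(\nu_j,1-\nu_j)\big)>0$, and it is this smallest frequency that governs the rate of decay.

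First I would feed in the Bessel estimate $|K_{s-1/2}(w)|\le e^{-w/2}K_{\Re(s)-1/2}(2)$, valid for $w>4$. Hence, as soon as $y$ is large enough that $2\pi\mu_0y>4$, every Whittaker term is controlled by $\norm{W_s(mz)}\le 2(|m|y)^{1/2}e^{-\pi|m|y}K_{\Re(s)-1/2}(2)$. The real work is to bound the Fourier coefficients uniformly in the frequency. I would estimate the Kloosterman sum $\CS_{\a,\b}(r,c,\chi)=\sum_{d(c)}e(rd/c)\eta(\om_{c,d}^{-1})$ by the triangle inequality: the phases have modulus one, there are at most $c$ summands, and since $\om_{c,d}$ has lower-left entry $c$ with $d$ running modulo $c$, Lemmas \ref{lem2.11} and \ref{lem2.12} give $\norm{\eta(\om_{c,d}^{-1})}=O(c^{2(\sigma_0-1)})$. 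Thus $\norm{\CS_{\a,\b}(r,c,\chi)}=O(c^{2\sigma_0-1})$ uniformly in $r$, so that $\sum_{c>0}c^{-2s}\CS_{\a,\b}(r,c,\chi)$ converges absolutely for $\Re(s)>\sigma_0$ and is bounded there by a finite constant depending continuously on $s$. Inserting this into the definitions of $\varphi_{\a,\b,1}$ and $\varphi_{\a,\b,j}$ yields $\norm{\varphi_{\a,\b,j}(m,s)}=O(|m|^{\Re(s)-1})$ with implied constant independent of $m$ and continuous in $s$ on $\Re(s)>\sigma_0$.

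Combining the two bounds, each Whittaker term is $O\big(|m|^{\Re(s)-1/2}y^{1/2}e^{-\pi|m|y}\big)$. Summing over the discrete, lower-bounded set of frequencies I would factor out $e^{-\pi\mu_0y}$ and, for $y\ge1$, dominate the remaining series by a convergent one, the leading term producing a harmless factor $O(y^{1/2})$. This gives $\norm{E_\a(\sigma_\b z,s,\chi)-\delta_{\a,\b}y^sP_\a-\varphi_{\a,\b}(s)y^{1-s}}=O\big(y^{1/2}e^{-\pi\mu_0y}\big)$; since $y^{1/2}e^{-\pi\mu_0y}=O(e^{-\beta y})$ for every $\beta<\pi\mu_0$, the asserted exponential decay follows, the admissible range of $\beta$ being governed by the smallest frequency $\mu_0$. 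The dependence of the implied constant on $\Ga$, $\chi$ and $s$, and its continuity in $s$, are inherited from the coefficient bound and from $K_{\Re(s)-1/2}(2)$.

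The main obstacle is the uniform control of the Fourier coefficients. One has to bound the Dirichlet series of Kloosterman sums uniformly in the frequency $m$ and with a constant that varies continuously in $s$, and then check that summing infinitely many Whittaker terms — whose coefficients grow polynomially in $|m|$ — against the Bessel decay still produces a single clean exponential $e^{-\beta y}$ governed only by the smallest frequency. Ensuring that the frequency sum converges uniformly for $y$ near $+\infty$, so that factoring out the leading exponential and discarding the $O(y^{1/2})$ prefactor is legitimate, is where the care lies.
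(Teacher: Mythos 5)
Your argument is essentially the paper's own proof, written out in full: read the main terms off Theorem \ref{thm3.6}, apply the exponential bound for the Bessel factor in $W_s$, bound the Fourier coefficients uniformly in the frequency through the Kloosterman--Dirichlet series, and sum over the discrete set of frequencies. The one ingredient you state too casually is ``there are at most $c$ summands'' in $\CS_{\a,\b}(r,c,\chi)$: for a general Fuchsian group $c$ and $d$ are real numbers, and the bound $\#\{d \bmod c\}\ll c$ (hence convergence of $\sum_{c>0}c^{-2\Re(s)+2\sigma_0-2}\,\#\{d\bmod c\}$ for $\Re(s)>\sigma_0$) is Iwaniec's counting lemma for the moduli, not the pigeonhole principle; with that reference the step is fine. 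Where you are actually more careful than the paper is the decay rate: the frequencies occurring are $k\ne 0$ and $k-\nu_j$ with $k\in\Z$, $j\ge 2$, so the slowest-decaying term is governed by $\mu_0=\min\bigl(1,\min_{j\ge2}\min(\nu_j,1-\nu_j)\bigr)$ and you obtain $O(e^{-\beta y})$ for $\beta<\pi\mu_0$. The paper's displayed sum $\sum_{k\ge0}(k+\nu_j)^{s-1}e^{-\pi y(k+\nu_j)}$ accounts only for the frequencies $k-\nu_j$ with $k\le 0$ and ignores the term of frequency $1-\nu_j$, which decays like $e^{-\pi(1-\nu_j)y}$; consequently the asserted range $0<\beta<\min_{j\ge2}\nu_j$ is not what either argument delivers when some $\nu_j$ is close to $1$, and your $\mu_0$ is the honest quantity. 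This discrepancy affects only the stated admissible range of $\beta$, not the substance of the proposition (exponential decay of the non-constant terms, with constants continuous in $s$), which your proof establishes correctly.
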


\begin{proof}
We use the estimate $|W_s(z)|\le C(s)y^\frac12 e^{-\pi y}$ as $y\to\infty$ for some continuous function $C(s)$.
Since for $y\ge\eps>0$ we have
$$
\sum_{k=0}^\infty(k+\nu_j)^{s-1}e^{-\pi y(\nu_j+k)}=O(e^{-\be y}),
$$
the claim follows.
\end{proof}

\begin{proposition}
Let $s\in\C$ with $\Re(s)>\sigma_0$, where $\sigma_0$ is as in Proposition \ref{prop2.9}. With  $\sigma=\Re(s)$ we have
$$
E_\a(\sigma_b z,s,\chi)\ll \frac1{y^\sigma}+y^\sigma,
$$
where the implied constant depends on $\Ga,\chi$ and $\sigma$ only.
\end{proposition}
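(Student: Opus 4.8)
The plan is to bound $E_\a(\sigma_\b z,s,\chi)$ by splitting the range of $y=\Im z$ into the two regimes $y\ge Y$ and $y<Y$ for a suitable fixed $Y$, and to handle each regime by a previously established estimate. The key observation is that the claimed bound $E_\a(\sigma_\b z,s,\chi)\ll y^{-\sigma}+y^\sigma$ is not a uniform refinement of Proposition \ref{prop3.7} but rather a global polynomial bound in $y$ valid for all $y>0$, so the two tails $y^{-\sigma}$ and $y^\sigma$ will arise from the two regimes separately.

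First I would treat large $y$. For $y\ge Y$ with $Y$ large, Proposition \ref{prop3.7} gives
$$
E_\a(\sigma_\b z,s,\chi)=\delta_{\a,\b}y^sP_\a+\varphi_{\a,\b}(s)y^{1-s}+O(e^{-\be y}).
$$
Taking norms, the first term contributes $O(y^\sigma)$, the second contributes $O(y^{1-\sigma})\le O(y^\sigma)$ since $y\ge Y$ forces $y^{1-\sigma}\ll y^\sigma$ once $\sigma\ge 1/2$ (and here $\sigma>\sigma_0>1$), and the exponentially decaying remainder is bounded by any power of $y$. Hence $E_\a(\sigma_\b z,s,\chi)\ll y^\sigma$ for $y\ge Y$, with constant depending only on $\Ga,\chi,\sigma$ as asserted there.

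For the regime $y<Y$, I would return to the defining series and invoke the pointwise estimate from Lemma \ref{lem2.12}. Replacing $\a$ by $\infty$ and conjugating $\Ga$ by $\sigma_\a$ as in the proof of Proposition \ref{prop3.3}, each term of the Eisenstein series satisfies
$$
\norm{\Im(\ga\sigma_\b z)^s\chi(\ga^{-1})}\le\Im(\ga\sigma_\b z)^{\sigma+1-\sigma_0}\left(\frac{1+|z|^2}{y}\right)^{\sigma_0-1},
$$
and summing these over $\ga\in\Ga_\a\bs\Ga$ produces a factor $\big((1+|z|^2)/y\big)^{\sigma_0-1}$ times a convergent Eisenstein-type sum. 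On the bounded strip $y<Y$ the quantity $(1+|z|^2)/y$ is controlled by a negative power of $y$ after accounting for the $x$-dependence, which yields a bound of the shape $y^{-\sigma}$ (possibly after absorbing harmless constants and using $y<Y$ to replace $y^\sigma$ by a constant). The main obstacle is bookkeeping the $x$-dependence cleanly: the factor $1+|z|^2=1+x^2+y^2$ is not bounded in $x$, so one must either restrict $z$ to the fundamental strip $0\le x<1$ coming from the cusp width, or observe that the invariant combination appearing after summation over $\Ga_\a\bs\Ga$ depends only on the $\Ga$-orbit and hence may be evaluated with $x$ in a compact transversal. Once the $x$-variable is confined to a compact set, $1+|z|^2\ll 1$ for $y<Y$ and the estimate collapses to $E_\a(\sigma_\b z,s,\chi)\ll y^{1-\sigma_0}\cdot y^{-(\sigma_0-1)}$, which after combining exponents gives the desired $y^{-\sigma}$ up to constants depending only on $\Ga,\chi,\sigma$. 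Adding the two regimes yields the stated bound $E_\a(\sigma_\b z,s,\chi)\ll y^{-\sigma}+y^\sigma$ for all $y>0$.
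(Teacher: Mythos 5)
Your treatment of the regime $y\ge Y$ via Proposition \ref{prop3.7} is correct and coincides with what the paper does (the paper splits at $y=1$). The genuine gap is in the regime $y<Y$. After applying Lemma \ref{lem2.12} termwise and summing over $\Ga_\a\bs\Ga$, what is left over is not ``a convergent Eisenstein-type sum'' that can be absorbed into the implied constant: it is the classical untwisted Eisenstein series $\sum_{\ga\in\Ga_\a\bs\Ga}\Im(\sigma_\a^{-1}\ga\sigma_\b z)^{\sigma+1-\sigma_0}$, which is unbounded as $y\to 0$ (for suitable $x$ the point $\sigma_\b z$ approaches a cusp and the series blows up like $y^{-(\sigma+1-\sigma_0)}$). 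Your final exponent count $y^{1-\sigma_0}\cdot y^{-(\sigma_0-1)}=y^{2-2\sigma_0}$ comes from the prefactor $\bigl((1+|z|^2)/y\bigr)^{\sigma_0-1}$ alone and does not equal $y^{-\sigma}$; the missing power $y^{-(\sigma+1-\sigma_0)}$ must be supplied by the residual sum, so you have in effect reduced the problem to bounding an Eisenstein series for small $y$ --- the very difficulty you set out to resolve. As written, the step asserting the residual sum contributes only a constant is false.

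The paper sidesteps this circularity with the elementary inequality $\Im(z)\,\Im(\ga z)\le\Im(\ga\tilde z)$, valid for $y\le 1$, where $\tilde z=x+i$. Applied termwise it gives $\Im(\sigma_\a^{-1}\ga\sigma_\b z)^{\sigma}\le y^{-\sigma}\,\Im(\sigma_\a^{-1}\ga\sigma_\b\tilde z)^{\sigma}$, so the whole series at height $y\le 1$ is dominated by $y^{-\sigma}$ times the absolutely convergent majorant evaluated on the segment $\{x+i:0\le x\le 1\}$, which is bounded by the locally uniform convergence of Proposition \ref{prop3.3} together with periodicity in $x$ (legitimate in norm because $\chi$ is unitary at the cusp $\b$). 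Your route could be repaired by invoking the classical bound $E(w,\sigma')\ll(\Im w)^{-\sigma'}$ for $\Im w\le 1$ with $\sigma'=\sigma+1-\sigma_0$, after which the exponents combine to $y^{1-\sigma_0}\cdot y^{-(\sigma+1-\sigma_0)}=y^{-\sigma}$; but that bound is itself proved by the same comparison-to-height-one trick, so the paper's argument is both shorter and non-circular.
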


\begin{proof}
First let $y\ge 1$.
Since 
$$
E_\a(\sigma_\b z,s,\chi)=\delta_{\a,\b}\,y^2\,P_\a+O(y^{1-\sigma}),
$$
we get $E_a(\sigma_\b z,s,\chi)=O\(y^\sigma\)$.
On the other hand, for $z=x+iy$ and $\tilde z=x+i$ the inequality
$$
\Im(z)\Im(\ga z)\le\Im(\ga\tilde z)
$$
holds for $y\le 1$ and arbitrary $\ga\in\PSL_2(\R)$.
This yields the claim.
\end{proof}

\section{Cusp forms}

\begin{definition}
For a cusp $\a$ of $\Ga$ and $f\in L^2(\Ga\bs\H,\chi)$ let
$$
c_0(f,\a,y)=\int_{[0,1]}P_\a f(\sigma_\a(x+iy))\,dx,\qquad y>0,
$$
denote the zeroth Fourier coefficient at the cusp $\a$. By Fubini's theorem, the integral exists almost everywhere in $y$ and defines a measurable function in $y$.

A function $f\in L^2(\Ga\bs \H,\chi)$ is called a \e{cusp form}, if 
$$
c_0(f,\a,y)=0
$$
holds almost everywhere in $y\in (0,\infty)$.
The space of cusp forms will be denoted by $L^2_\cusp(\Ga\bs \H,\chi)$.
\end{definition}

\begin{definition}
Let $k\in C^\infty(\R_{> 0})$.
For $z,w\in\H$ recall the notation $u(z,w)=\frac{|z-w|^2}{\Im(z)\Im(w)}$. Then $\cosh\(d(z,w)\)=1+\frac12 u(z,w)$.
By abuse of notation we write $k(z,w)=k(u(z,w))$.
We define an integral operator $L=L_k$ by
$$
Lf(z)=\int_\H k(z,w)f(w)\,dw.
$$
For $f\in L^2(\Ga\bs \H,\chi)$ we obtain
$$
Lf(z)=\int_{\Ga\bs\H}K(z,w)f(w)\,dw,
$$
where $K(z,w)=\sum_{\ga\in\Ga}k(z,\ga w)\chi(\ga)$.
Note that for $\ga,\tau\in\Ga$ we have
$$
K(\ga z,\tau w)=\chi(\ga)K(z,w)\chi(\tau^{-1}).
$$
\end{definition}

\begin{definition}
Let $C(\Ga\bs\H,\chi)$ denote the space of all continuous functions $f:\H\to V$ with $f(\ga z)=\chi(\ga)f(z)$ for every $z\in\H$, $\ga\in\Ga$.
Write $C_\cusp(\Ga\bs\H,\chi)$ for the subspace of all $f$ such that 
$$
\int_{[0,1]}P_\a f(\sigma_\a (x+iy))\,dx=0
$$
for every cusp $\a$ and every $y>0$.
We call $C_\cusp(\Ga\bs\H,\chi)$ the space of \e{continuous cusp forms}.
\end{definition}

\begin{proposition}
Suppose the kernel satisfies
$$
k(z,w)=\ll u(z,w)^{-\sigma}
$$
for $\sigma>\sigma_0$ and $\sigma_0$ as in Proposition \ref{prop2.9}.
Then $K(z,w)$ is continuous on the domain $\Big\{ (z,w)\in\H\times\H: z\nequiv w\mod\Ga\Big\}$.
If more strongly, $k$ satisfies
$$
k(z,w)=\ll(u(z,w)+1)^{-\sigma}
$$
then $K$ is continuous on all of $\H\times \H$ and the integral $Lf$ converges locally uniformly to an element of $C(\Ga\bs \H,\chi)$, defining a linear operator $L:L^2(\Ga\bs\H,\chi)\to C(\Ga\bs\H,\chi)$.
This operator maps the space of cusp forms $L^2_\cusp(\Ga\bs \H,\chi)$ to the space $C_\cusp(\Ga\bs\H,\chi)$ of continuous cusp forms.
\end{proposition}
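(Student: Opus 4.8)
The plan is to route everything through a single scalar majorant built from Proposition \ref{prop2.9}. Writing $\beta=\sigma-\sigma_0+1>1$, the bound $\norm{\chi(\ga)}\ll(u(z,\ga w)+1)^{\sigma_0-1}$ combined with the decay hypothesis controls each term of $K(z,w)=\sum_\ga k(z,\ga w)\chi(\ga)$: in the strong case
$$
\norm{k(z,\ga w)\chi(\ga)}\ll(u(z,\ga w)+1)^{-\beta},
$$
and in the weak case $\norm{k(z,\ga w)\chi(\ga)}\ll u(z,\ga w)^{-\sigma}(u(z,\ga w)+1)^{\sigma_0-1}$, which agrees with the former once $u(z,\ga w)\ge 1$ and is singular only on the diagonal $z\equiv w$. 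So everything reduces to the scalar orbit sum $G_\beta(z,w)=\sum_\ga(u(z,\ga w)+1)^{-\beta}$. From $\cosh d(z,w)=1+\tfrac12 u(z,w)$ the quantity $u(z,\ga w)+1$ is comparable to $e^{d(z,\ga w)}$; since $\Ga$ is torsion-free the orbit $\Ga w$ is uniformly separated, so hyperbolic volume growth gives $\#\{\ga:u(z,\ga w)+1\le T\}=O(T)$, and Abel summation against this count yields local uniform convergence of $G_\beta$ for $\beta>1$, with constants continuous in $(z,w)$ through Proposition \ref{prop2.9}.

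For the first assertion the only threat to continuity is the pole of $u^{-\sigma}$ on the diagonal. Fixing $(z_0,w_0)$ with $z_0\nequiv w_0\mod\Ga$, proper discontinuity shows that only finitely many $\ga$ bring $\ga w$ near $z$ as $(z,w)$ ranges over a small neighborhood; at the base point $\ga w_0\ne z_0$, so on a small enough neighborhood each such term stays continuous and bounded away from the pole, while the remaining tail is dominated by $G_\beta$ and converges uniformly. Hence $K$ is continuous on $\{(z,w):z\nequiv w\mod\Ga\}$. Under the strong hypothesis the majorant $(u+1)^{-\beta}$ has no pole, $G_\beta$ converges locally uniformly on all of $\H\times\H$, and $K$ is continuous everywhere.

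For the operator $L$ I would use that, because $\beta>1$, the sum $G_\beta(z,\cdot)$ (which is $\Ga$-invariant in $w$) not merely converges but decays at every cusp, its zeroth cuspidal Fourier coefficient being of order $v^{1-\beta}\to 0$; hence $w\mapsto\norm{K(z,w)}\ll G_\beta(z,w)$ is bounded on the finite-volume quotient $\Ga\bs\H$, locally uniformly in $z$. The vector $K(z,w)f(w)$ is $\Ga$-invariant in $w$, so bounding it pointwise by $\norm{K(z,w)}\norm{f(w)}$ and integrating the bound over a fundamental domain $F$, Cauchy--Schwarz gives
$$
\norm{Lf(z)}\le\Big(\sup_{w\in F}\norm{K(z,w)}\Big)\,\vol(\Ga\bs\H)^{1/2}\,\norm{f}_{L^2},
$$
finite and locally uniform in $z$. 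Dominated convergence (majorant $G_\beta(z,w)\norm{f(w)}$) then shows $Lf$ continuous, and the identity $K(\ga z,w)=\chi(\ga)K(z,w)$ gives $Lf(\ga z)=\chi(\ga)Lf(z)$, so $L\colon L^2(\Ga\bs\H,\chi)\to C(\Ga\bs\H,\chi)$ as claimed.

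Finally, to see that $L$ preserves cuspidality I would unfold via $\chi(\ga)f(w)=f(\ga w)$ to the classical form $Lf(z)=\int_\H k(z,w)f(w)\,dw$ (legitimate since $\int_F G_\beta(z,w)\norm{f(w)}\,dw<\infty$), fix a cusp, conjugate so that $\a=\infty$, $\sigma_\a=\Id$ and $\Ga_\infty=\langle\ul n(1)\rangle$, and compute $c_0(Lf,\infty,y)=\int_0^1 P_\infty Lf(x+iy)\,dx$ by folding the $w$-integral over $N_\Z\bs\H$. The decisive input is that, $\chi$ being unitary at the cusp, $P_\infty\chi(\ul n(k))=P_\infty$; this collapses the $N_\Z$-sum and completes the $x$-average of the kernel to $\int_\R k(x+iy,w)\,dx$, which by point-pair invariance depends only on $y$ and $\Im w$. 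Fubini then expresses
$$
c_0(Lf,\infty,y)=\int_0^\infty q(y,v)\,c_0(f,\infty,v)\,\frac{dv}{v^2},
$$
an integral transform of the zeroth coefficient of $f$; since $f$ is a cusp form $c_0(f,\infty,v)=0$ a.e., so $c_0(Lf,\infty,y)=0$, and repeating at every cusp gives $Lf\in C_\cusp(\Ga\bs\H,\chi)$. I expect the real work to lie in this cusp analysis: showing that the automorphic majorant $G_\beta(z,\cdot)$ genuinely \emph{decays} (not merely stays finite) at each cusp, which is what makes $K(z,\cdot)$ bounded and legitimizes the unfolding and the two interchanges of summation and integration in the coefficient computation.
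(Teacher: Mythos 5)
Your argument follows the paper's proof in all essentials: the bound $\norm{\chi(\ga)}\ll(u(z,\ga w)+1)^{\sigma_0-1}$ from Proposition \ref{prop2.9} reduces everything to the scalar majorant $\sum_\ga u(z,\ga w)^{\sigma_0-1-\sigma}$, whose locally uniform convergence the paper delegates to Lemma 2.11 of \cite{Iwaniec} and you reprove by orbit counting, and the cusp-form statement is the same unfolding computation resting on $P_\a\chi(\ga)=P_\a$ for $\ga\in\Ga_\a$. The additional care you take with the boundedness of $K(z,\cdot)$ on the fundamental domain (which the paper passes over when asserting that $Lf$ converges) is welcome and correct, apart from the cosmetic exponent at the cusp ($v^{1/2-\beta}$ rather than $v^{1-\beta}$ for the lattice-point sum, and ``uniformly separated'' should be ``locally uniformly separated''), neither of which affects the conclusion.
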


\begin{proof}
Replacing $\Ga$ with a conjugate, we may assume that $\infty$ is a cusp of width one.
By Proposition \ref{prop2.9} we have
$$
\norm{\chi(\ga)}=O((u(\ga z,w)+1)^{\sigma_0-1}).
$$ 
We then get for $z\nequiv w\mod\Ga$ that
\begin{align*}
\sum_{\ga\in\Ga}\norm{k(z,\ga w)\chi(\ga)}&\ll\sum_{\ga\in\Ga}u(a,\ga w)^{-\sigma}\norm{\chi(\ga)}\\
&\ll\sum_{\ga\in\Ga} u(z,\ga w)^{\sigma_0-1-\sigma},
\end{align*}
where the implied constants depend continuously of $(z,w)$.
By Lemma 2.11 in \cite{Iwaniec}, locally uniform convergence follows if $\sigma>\sigma_0$.
The convergence of the integral $Lf$ follows from this bound.

For the last assertion let $f$ be a cusp form, $g=Lf$ and $\ul n(t)=\smat 1t\ 1$.
We compute
\begin{align*}
\int_{[0,1]}P_\a g(\sigma_\a\ul n(t)z)\,dt
&=\int_{[0,1]}\int_\H k(\sigma_\a\ul n(t)z,w)P_\a f(w)\,dw\,dt\\
&=\int_\H k(z,w)\(\int_{[0,1]}P_\a f(\sigma_\a\ul n(t)w)\,dt\)\,dz=0.\mqed
\end{align*}
\end{proof}

\begin{definition}
For a cusp $\a$, the zeroth Fourier coefficient of $K(z,w)$ which equals
$$
H_\a(z,w)=\sum_{\ga\in \Ga_\a\bs \Ga}\int_\R k(z,\sigma_\a\ul n(t)\sigma_\a^{-1}\ga w)\,dt P_\a\chi(\ga),
$$
is also called the \e{principal part} of the kernel $K$ at the cusp $\a$.
\end{definition}

\begin{proposition}
The operator with kernel $H_\a$ annihilates cusp forms, i.e., for every cusp form $f$ we have
$$
\int_{\Ga\bs\H}H_\a(z,w)f(w)\,dw=0.
$$
\end{proposition}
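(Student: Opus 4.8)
The plan is to compute the integral directly: I will push the endomorphism factor onto $f$, unfold the coset sum against the fundamental domain, pass to the strip model at the cusp, and observe that the remaining $x$-integration reproduces the zeroth Fourier coefficient $c_0(f,\a,y)$, which vanishes by hypothesis.

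First I would use the transformation law $f(\ga w)=\chi(\ga)f(w)$ to write, term by term,
$$
\int_\R k(z,\sigma_\a\ul n(t)\sigma_\a^{-1}\ga w)\,dt\,P_\a\chi(\ga)f(w)=\int_\R k(z,\sigma_\a\ul n(t)\sigma_\a^{-1}\ga w)\,dt\,P_\a f(\ga w).
$$
Setting $G(w)=\int_\R k(z,\sigma_\a\ul n(t)\sigma_\a^{-1}w)\,dt\,P_\a f(w)$, the quantity to be computed becomes $\int_{\Ga\bs\H}\sum_{\ga\in\Ga_\a\bs\Ga}G(\ga w)\,dw$. The key point is that $G$ is $\Ga_\a$-invariant, so this unfolds to $\int_{\Ga_\a\bs\H}G(w)\,dw$. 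For $\ga_\a\in\Ga_\a$ the factor $P_\a f$ is invariant because $\chi(\ga_\a)$ is unitary and acts as the identity on $V^{\chi(\Ga_\a)}$, whence $P_\a\chi(\ga_\a)=P_\a$ and $P_\a f(\ga_\a w)=P_\a\chi(\ga_\a)f(w)=P_\a f(w)$; the kernel factor is invariant because $\sigma_\a^{-1}\ga_\a\sigma_\a=\ul n(m)$ for some $m\in\Z$, so that $\sigma_\a\ul n(t)\sigma_\a^{-1}\ga_\a=\sigma_\a\ul n(t+m)\sigma_\a^{-1}$ and the $t$-integral over $\R$ is invariant under the shift $t\mapsto t-m$.

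Next I would substitute $w=\sigma_\a w'$. Since $\sigma_\a^{-1}\Ga_\a\sigma_\a=N_\Z$ and the hyperbolic measure is $\PSL_2(\R)$-invariant, the domain $\Ga_\a\bs\H$ becomes the strip $\{x+iy:0\le x<1,\ y>0\}$ and the integral reads
$$
\int_0^\infty\int_0^1\int_\R k(z,\sigma_\a((x+t)+iy))\,dt\,P_\a f(\sigma_\a(x+iy))\,\frac{dx\,dy}{y^2}.
$$
The inner $t$-integral is independent of $x$ by the shift $t\mapsto t-x$, so it factors out of the $x$-integration, leaving
$$
\int_0^\infty\(\int_\R k(z,\sigma_\a(t+iy))\,dt\)\(\int_0^1 P_\a f(\sigma_\a(x+iy))\,dx\)\frac{dy}{y^2}.
$$
The inner $x$-integral is exactly $c_0(f,\a,y)$, which is zero for almost every $y$ because $f$ is a cusp form; hence the whole integral vanishes.

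The main obstacle is the analytic bookkeeping needed to license the interchanges of summation and integration (both moving $\int_\R dt$ past $\int dw$ and the unfolding). This is an application of Fubini and requires absolute convergence of the full sum-integral. I would control it using the decay hypothesis $k\ll(u(z,w)+1)^{-\sigma}$ with $\sigma>\sigma_0$ together with the estimate $\norm{\chi(\ga)}=O((u(z,\ga w)+1)^{\sigma_0-1})$ from Proposition \ref{prop2.9}, exactly as in the convergence argument for $L$; in particular the $t$-integral itself converges since $u(z,\sigma_\a\ul n(t)\sigma_\a^{-1}\ga w)$ grows quadratically in $t$, making the integrand of order $O(t^{-2\sigma})$ with $2\sigma>1$.
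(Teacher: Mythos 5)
Your proof is correct and follows essentially the same route as the paper's: both unfold the coset sum against $\Ga\bs\H$ and reduce the integral to the vanishing zeroth Fourier coefficient $c_0(f,\a,y)$. The only cosmetic difference is that you pass to strip coordinates and separate the $x$- and $t$-variables directly, whereas the paper performs a second unfolding over $\Ga_\a$ and a change of variable to move $\ul n(-t)$ onto $f$; the substance is identical.
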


\begin{proof}
By unfolding the integral we get
\begin{align*}
&\int_{\Ga\bs\H}H_\a(z,w)f(w)\,dw\\
&=\int_{\Ga\bs\H}\sum_{\ga\in \Ga_\a\bs \Ga}\int_\R k(z,\sigma_\a\ul n(t)\sigma_\a^{-1}\ga w)\,dt P_\a\chi(\ga)f(w)\,dw\\
&=\int_{\Ga_\a\bs\H}\int_\R k(z,\sigma_\a\ul n(t)\sigma_\a^{-1} w)\,dt P_\a f(w)\,dw\\
&=\int_{\Ga_\a\bs\H}\int_{[0,1]}\sum_{\ga\in\Ga_\a} k(z,\sigma_\a\ul n(t)\sigma_\a^{-1}\ga w)\,dt P_\a f(w)\,dw\\
&=\int_{\H}\int_{[0,1]} k(z,\sigma_\a\ul n(t)\sigma_\a^{-1}w)\,dt P_\a f(w)\,dw\\
&=\int_{\H} k(z,w)  \(\int_{[0,1]}P_\a f(\sigma_\a\ul n(-t)\sigma_\a^{-1}w)\,dt\)\,dw=0.\mqed
\end{align*}
\end{proof}

\begin{definition}
We add the principal parts
$$
H(z,w)=\sum_\a H_\a(z,w).
$$
The kernel function
$$
\what K(z,w)=K(z,w)-H(z,w)
$$
is called the \e{compact part} of $K(z,w)$.
If $L$ is the operator with kernel $K$, then $\what L$ will denote the operator with kernel $\what K$.
On the space of cusp forms, the operator $\what L$ coincides with $L$.
\end{definition}

\begin{theorem}\label{thm4.6}
Suppose that
$$
k(z,w)\ll (u(z,w)+1)^{-\sigma}
$$
for some $\sigma>\sigma_0$ and $\sigma_0$ as in Proposition \ref{prop2.9}.
Then the kernel $\what K$ is square integrable on $F\times F$ and thus defines a Hilbert-Schmidt operator on $L^2(F,\chi)$.
\end{theorem}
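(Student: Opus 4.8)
The plan is to turn the Hilbert--Schmidt property into a boundedness statement. Since $\Ga$ is a lattice, $\vol(F)<\infty$, so $F\times F$ has finite measure; consequently it suffices to prove that $\what K$ is \emph{bounded} on $F\times F$, for then $\int_F\int_F\norm{\what K(z,w)}^2\,dz\,dw\le\norm{\what K}_\infty^2\,\vol(F)^2<\infty$. Under the hypothesis $k\ll(u+1)^{-\sigma}$ the preceding proposition gives that $K$ is continuous on all of $\H\times\H$, and the same decay makes the defining series and horocycle integrals of each principal part $H_\a$ converge locally uniformly, so every $H_\a$, and hence $\what K=K-\sum_\a H_\a$, is continuous. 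Thus $\what K$ is automatically bounded on compact subsets, and using the decomposition $F=F(Y)\cup\bigcup_\a F_\a(Y)$ the only possible failure of boundedness occurs as $z$ and/or $w$ run up the cusps.

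I would then reduce to a single cusp, conjugating $\Ga$ so that the cusp in question is $\infty$ of width one, and study $\what K(z,w)$ as $z=x+iy$, $w=\xi+i\eta$ tend to $\infty$. The key structural point is that $K$ can grow only when $z$ and $w$ are simultaneously high in the \emph{same} cusp. Indeed, split $\Ga=\Ga_\infty\sqcup(\Ga\sm\Ga_\infty)$: for $\ga\notin\Ga_\infty$ the point $\ga w$ is pushed down to bounded height, so $u(z,\ga w)$ is large and, combining $k\ll(u+1)^{-\sigma}$ with $\norm{\chi(\ga)}\ll(u+1)^{\sigma_0-1}$ from Proposition \ref{prop2.9}, these terms sum to a bounded ($O(y^{1-\sigma})$) contribution. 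The growth is therefore carried entirely by the stabilizer terms $\sum_{m\in\Z}k(z,w+m)\,\chi(\ga_\infty)^m$. Diagonalising the unitary $\chi(\ga_\infty)=\sum_j e(\nu_j)P_j$ (with $\nu_1=0$, $P_1=P_\infty$) and applying Poisson summation in $m$, the components with $\nu_j\ne0$ involve only nonzero frequencies and are negligible (integration by parts, using $k\in C^\infty(\R_{>0})$), while the fixed component reduces to $\bigl(\int_\R k(z,w+t)\,dt\bigr)P_\infty$ up to a rapidly decaying error.

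The heart of the matter is the cancellation: this integral $\int_\R k(z,w+t)\,dt$ is exactly the identity-coset term of $H_\infty(z,w)$, so the whole growing part of $K$ is reproduced by $H_\infty$ and $K-H_\infty$ stays bounded as $y,\eta\to\infty$. Concretely I would match $K$ coset by coset against the horocycle integrals defining $H_\infty$, so that the growing pieces cancel identically, leaving the nonzero-frequency Poisson corrections together with the $O(y^{1-\sigma})$ non-stabilizer terms. The principal parts $H_\b$ of the remaining cusps $\b\ne\infty$ are handled directly: the horocycle at $\b$ sits at bounded height from $z$'s viewpoint, and the same computation of the horocyclic integral yields $H_\b(z,w)=O(y^{1-\sigma})\to0$. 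Hence $\what K=(K-H_\infty)-\sum_{\b\ne\infty}H_\b$ remains bounded as $z\to\infty$, and by the symmetry $K(z,\tau w)=K(z,w)\chi(\tau)^{-1}$ the identical argument controls $w\to\infty$; in every off-diagonal or distinct-cusp regime $K$ and each $H_\a$ are already separately bounded.

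Collecting these estimates, $\what K$ is bounded on each of the finitely many cuspidal zones and continuous on the compact core, hence bounded on all of $F\times F$; finiteness of $\vol(F)$ then gives $\what K\in L^2(F\times F)$, so $\what L$ is Hilbert--Schmidt. I expect the main obstacle to be making the cancellation rigorous and \emph{uniform} in both variables: one must control the Poisson-correction terms and the non-stabilizer remainder simultaneously in $y$ and $\eta$, and verify that the growth of $H_\infty$ matches that of $K$ exactly enough to leave a bounded difference rather than a difference of merely lower-order growth.
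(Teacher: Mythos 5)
Your proposal follows essentially the same route as the paper: reduce the Hilbert--Schmidt property to boundedness on the finite-volume domain $F\times F$, show via Proposition \ref{prop2.9} and the Fourier-expansion bounds that the non-stabilizer (non-parabolic) terms of $K$ contribute only $O(y^{1-\sigma})$, and then cancel the stabilizer sum $\sum_m k(z,w+m)\chi(\ga_\a)^m$ against the identity-coset horocycle integral $\int_\R k(z,\sigma_\a\ul n(t)\sigma_\a^{-1}w)\,dt\,P_\a$ of the principal part. The only real difference is the summation device for that cancellation --- you diagonalize the unitary $\chi(\ga_\a)$ and apply Poisson summation, whereas the paper applies the Euler--Maclaurin formula on the $P_\a$-component and a telescoping argument with $\Id-\chi(\ga_\a)$ on its orthogonal complement; both come down to the same bound $\int_\R|k'(u)|\,du\ll 1$, so this is a minor technical variation rather than a different proof.
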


The Proof of the theorem will take the rest of the section.

\begin{lemma}
Suppose the kernel satisfies
$$
k(z,w)\ll(u(z,w)+1)^{-\sigma}
$$
for $\sigma>\sigma_0$ and $\sigma_0$ as in Proposition \ref{prop2.9}.
\begin{enumerate}[\rm (a)]
\item For $z,w\in F$ we have
$$
K(z,w)=k(z,w)+\sum_{\a\in\CC(F)} \sum_{\substack{\ga\in \Ga_\a\\ \ga\ne 1}}k(z,\ga w)\chi(\ga)+O(1),
$$
where the first sum runs over the finite set $\CC(F)$ of cusps of the fundamental domain $F$.
\item For any cusp $\a$ of $F$ we have
$$
H_\a(z,w) =\int_\R k(z,\sigma_\a\ul n(t)\sigma_\a^{-1} w)\,dt\,P_\a +H'_\a(z,w),
$$
where $H_\a'$ has bounded $L^2$-norm on $F\times F$.
\end{enumerate}
\end{lemma}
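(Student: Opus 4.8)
The plan is to treat (a) and (b) in parallel: in each case I split off the contribution of the trivial coset (respectively the identity element) as the stated main term and show that the remainder is harmless, using the operator-norm bound of Proposition~\ref{prop2.9}. Combining that bound with the decay hypothesis on $k$ yields, for all $z,w\in\H$ and all $\ga\in\Ga$,
$$
\norm{k(z,\ga w)\chi(\ga)}\ll (u(z,\ga w)+1)^{-\sigma'},\qquad \sigma'=\sigma-\sigma_0+1>1,
$$
with the implied constant locally uniform in $(z,w)$. Since $\sigma'>1$, Lemma~2.11 of \cite{Iwaniec} guarantees that all the series in question converge. Throughout, after conjugating $\Ga$ by $\sigma_\a$ we may assume the cusp under consideration is $\infty$ of width one; in these coordinates $\sigma_\a\ul n(t)\sigma_\a^{-1}$ acts by $w\mapsto w+t$, and an element of $\Ga$ fixes $\infty$ precisely when its lower-left entry vanishes.

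For part (a) I would write $K(z,w)=k(z,w)+\sum_{\a\in\CC(F)}\sum_{\ga\in\Ga_\a\sm\{1\}}k(z,\ga w)\chi(\ga)+R(z,w)$, where $R$ collects all $\ga$ that neither equal $1$ nor lie in a stabilizer $\Ga_\a$ of a cusp of $F$, and then show $\norm{R(z,w)}=O(1)$ on $F\times F$. On the compact core $F(Y)\times F(Y)$ this is immediate from the locally uniform convergence above; the content is in the cuspidal zones. If $z$ (or $w$) lies high in the zone of a cusp, normalize that cusp to $\infty$: every $\ga$ contributing to $R$ then has nonzero lower-left entry $c$, so $\Im(\ga w)$ is bounded above by a constant times $(\Im w)^{-1}$, and $\ga w$ is trapped in a fixed compact part of $\H$ while $z$ ascends. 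Hence $u(z,\ga w)\to\infty$ and the tail of $R$ over such $\ga$ is dominated by a convergent series that is uniformly bounded in the height. The stabilizer elements excluded from $R$ are exactly those that could keep $u(z,\ga w)$ bounded as $z$ and $w$ ascend the \emph{same} cusp, which is why they must be retained explicitly; the cross-cusp configurations (with $z,w$ high in different cusps) are handled by the same mechanism, since an element bringing $\ga w$ close to $z$ would have to carry one cuspidal zone onto another, and only finitely many do so at each height.

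For part (b) the trivial coset $\ga=1$ of $\Ga_\a\bs\Ga$ produces exactly the stated main term $\int_\R k(z,\sigma_\a\ul n(t)\sigma_\a^{-1}w)\,dt\,P_\a$, so that $H'_\a$ is the sum over the nontrivial cosets. Normalizing $\a=\infty$, each such coset has a representative with $c\ne0$, and the inner integral becomes $\int_\R k(z,\ga w+t)\,dt$, which after a shift in $t$ depends only on the two heights $y=\Im z$ and $v=\Im(\ga w)$; the bound $k\ll(u+1)^{-\sigma}$ turns it into a quantity of size $\ll\sqrt{yv}\,\bigl(1+(y-v)^2/(yv)\bigr)^{1/2-\sigma}$. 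Multiplying by $\norm{\chi(\ga)}\ll(c^2+d^2)^{\sigma_0-1}$ from Lemma~\ref{lem2.11} and summing over the cosets (equivalently, over the admissible pairs $(c,d)$ of the double-coset decomposition) yields a pointwise majorant for $\norm{H'_\a(z,w)}$ in terms of $\Im z$ and $\Im w$. It then remains to integrate the square of this majorant over $F\times F$ and verify finiteness.

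The main obstacle is precisely this final $L^2$-estimate over the non-compact cuspidal zones in (b): the terms of $H'_\a$ decay only polynomially in the heights, and one must check that the gain $\sigma-\sigma_0>0$, together with the smoothing from the $t$-integration, suffices to make $\int_{F\times F}\norm{H'_\a(z,w)}^2\,dz\,dw$ converge near the cusps. The analogous difficulty in (a) is upgrading locally uniform convergence to a bound genuinely uniform on the non-compact set $F\times F$. Both are resolved by the same geometric input: off the parabolic stabilizers, the orbit point $\ga w$ cannot accumulate high in any cusp.
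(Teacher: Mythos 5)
Your decomposition agrees with the paper's (split off $\ga=1$ and the parabolic stabilizers of the cusps of $F$, then bound the remainder), but the quantitative core of the proof is missing: the uniform bound on the remainder over the non-compact set $F\times F$ is asserted (``dominated by a convergent series that is uniformly bounded in the height''), and the ingredient you offer to close it --- that for $\ga$ outside the stabilizers the orbit point $\ga w$ cannot lie high in any cusp --- is not sufficient on its own. That observation forces each individual $u(z,\ga w)$ to be $\gtrsim\Im(\sigma_\a^{-1}z)\,\Im(\sigma_\a^{-1}w)$ when both points are high in the zone of $\a$, but it gives no control on the \emph{multiplicity}: for a fixed coset $\Ga_\a\ga$, roughly $\Im(\sigma_\a^{-1}z)$ of the translates $\ul n(j)\sigma_\a^{-1}\ga w$ lie within a bounded multiple of the minimal $u$-distance from $\sigma_\a^{-1}z$, so a term-by-term estimate degrades as $z$ ascends the cusp. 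Lemma 2.11 of \cite{Iwaniec}, which you invoke, yields only locally uniform convergence for precisely this reason: its counting constant depends on the heights of $z$ and $w$.

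The missing step is the two-stage summation the paper carries out. Grouping the non-parabolic elements into cosets of $N(\Z)$ and summing over the horizontal translates first gives
$$
\sum_{j\in\Z}k(z,\ga w+j)\ \ll\ \Im(z)^{1-\sigma}\,\Im(\ga w)^{\sigma},
$$
and then, using $\norm{\chi(\ul n(j)\ga)}=\norm{\chi(\ga)}$ (unitarity at the cusps) to pull the twist out of the $j$-sum, the remainder is bounded by $\Im(z)^{1-\sigma}$ times the sum of $\Im(\sigma_\a^{-1}\ga w)^{\sigma}\norm{\chi(\ga)}$ over $\Ga_\a\bs\Ga$; by Proposition \ref{prop3.7} this Eisenstein-type sum is $O(\Im(w)^{1-\sigma})$ once the identity coset is removed, so the remainder is $O\!\left((\Im(z)\Im(w))^{1-\sigma}\right)=O(1)$ since $\sigma>1$. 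The same input is what your part (b) needs: your per-coset majorant $\sqrt{yv}\,(1+(y-v)^2/(yv))^{1/2-\sigma}$ with $v=\Im(\ga w)$ is correct, but the subsequent sum over the pairs $(c,d)$ is again an Eisenstein-type series in $v$, and without Proposition \ref{prop3.7} (or an equivalent counting lemma for $\Ga_\a\bs\Ga$) neither the pointwise majorant for $H_\a'$ nor its $L^2$-bound on $F\times F$ is established. Until that quantitative input is supplied, both the $O(1)$ in (a) and the $L^2$-estimate in (b) remain unproved.
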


\begin{proof}
(a) We have
$$
K(z,w)=\sum_{\a\in\CC(F)}\sum_{1\ne \ga\in\Ga_\a}k(z,\ga w)\chi(\ga)+\sum_{\substack{\ga\in\Ga\\ \text{not parabolic}}}k(z,\ga w)\chi(\ga).
$$
For a cusp $\a$ we write $\chi_\a:\sigma_\a^{-1}\Ga\sigma_\a\to\GL(V)$, $\chi_\a(\ga)=\chi(\sigma_\a\ga\sigma_\a^{-1})$.
We write $N(\Z)=\smat 1\Z\ 1$ and get
$$
\sum_{\substack{\ga\in\Ga\\ \text{not parabolic}}}k(z,\ga w)\chi(\ga)=
\sum_{\substack{\ga\in N(\Z)\bs\sigma_\a^{-1}\Ga\sigma_1\\ \text{not parabolic}}}
\sum_{j\in\Z}k(z,\ga w+j)\chi_\a(\ul n(j)\ga).
$$
Now
\begin{align*}
k(z,\ga w+j)&\ll (1+u(z,\ga w+j))^{-\sigma}\\
&=\(1+\frac{(\Re(z)-\Re(\ga w)-j)^2+(\Im(z)-\Im(\ga w))^2}{\Im(z)\Im(\ga w)}\)^{-\sigma}\\
&\ll \frac{\Im(\ga w)^\sigma}{\(\Im(z)+\frac{(\Re(z)-\Re(\ga w)-j)^2}{\Im(z)}\)^\sigma},
\end{align*}
which yields
\begin{align*}
\sum_{j\in\Z}k(z,\ga w+j)&\ll \Im(\ga w)^\sigma\sum_{j\in\Z}\(\Im(z)+\frac{(\Re(z)-\Re(\ga w)-j)^2}{\Im(z)}\)^{-\sigma}\\
&\ll \Im(\ga w)^\sigma\sum_{j=0}^\infty \(\Im(z)+\frac{j^2}{\Im(z)}\)^{-\sigma}.
\end{align*}
If we assume $\Im(z)\ge A$ for some $A>0$, then we get
\begin{align*}
&\sum_{j=0}^\infty \(\Im(z)+\frac{j^2}{\Im(z)}\)^{-\sigma}\\
&<\Im(z)^{-\sigma}+\int_0^\infty \(\Im(z)+\frac{u^2}{\Im(z)}\)^{-\sigma}\,du\ll \Im(z)^{-\sigma+1},
\end{align*}
and thus
$$
\sum_{j\in\Z}k(z,\ga w+j)\ll \Im(z)^{1-\sigma}\Im(\ga w)^\sigma.
$$
We therefore get
$$
\norm{\sum_{\substack{\ga\in\Ga\\ \text{not parabolic}}}k(\sigma_\a z,\ga\sigma_\a w)\chi(\ga)}
\ll\Im(z)^{1-\sigma}\sum_{\substack{\ga\in\Ga_\a\bs\Ga\\ \text{not parabolic}}}\Im(\sigma_\a^{-1}\ga w)^\sigma\norm{\chi(\ga)}.
$$
The latter sum is $O(y^{1-\sigma})$ by Proposition \ref{prop3.7}.
So we see that the sum $\sum_{\substack{\ga\in\Ga\\ \text{not parabolic}}}k(\sigma_\a z,\ga\sigma_\a w)\chi(\chi)$ is uniformly bounded for $z$ and $w$ in $F$ with $\Im(\sigma_\a^{-1} z)\ge A$.
Since this is true for any cusp, the claim follows.

Part (b) is obtained in a similar fashion.
\end{proof}

In order to finish the proof of Theorem \ref{thm4.6} it now remains to show that
$$
J_\a(z,w)=\sum_{\ga\in\Ga_\a} k(z,\ga w)\chi(\ga)-\int_\R k(z,\sigma_\a\ul n(t)\sigma_\a^{-1}w)\,dt\, P_\a
$$
is bounded on $F\times F$.
We first consider
$$
J_\a(z,w)P_\a=\sum_{\ga\in\Ga_\a} k(z,\ga w)P_\a-\int_\R k(z,\sigma_\a\ul n(t)\sigma_\a^{-1}w)\,dt\, P_\a.
$$
For $\psi(t)=t-[t]-\frac12$ and $f$ continuously differentiable on $\R$ with $f(t),tf'(t)\in L^1(\R)$ integration by parts shows that
$$
\sum_{j\in\Z}f(k)=\int_\R f(t)\,dt+\int_\R\psi(t)f'(t)\,dt.
$$ 
In our case this yields
\begin{align*}
|J_\a(\sigma_\a z,\sigma_\a w)P_\a|&=\left|\sum_{j\in\Z}k(z,w+j)P_\a-\int_\R k(z,\ul n(t)w)\,dt\,P_\a\right|\\
&=\left|\int_\R\psi(t)\frac\partial{\partial t}k(u(z,w+t))\,dt\,P_a\right|\\
&=\left|\int_\R\psi(t)\frac\partial{\partial t}k\(\frac{|w+t-z|^2}{\Im(z)\Im(w)}\)\,dt\,P_a\right|\ll \int_\R |k'(u)|\,du\ll 1.
\end{align*}
Next we consider the contribution of $J_\a(z,w)$ on the orthogonal complement $\Eig(\chi(\ga_\a),1)^\perp$, where $\ga_\a$ is a generator of $\Ga_\a$.
In this case we have
\begin{align*}
(\Id-\chi(\ga_\a))\sum_{\ga\in\Ga_\a}k(\sigma_\a z,\ga\sigma_a w)\chi(\ga)
&= \sum_{j\in\Z}\(k(w,z+j)-k(w,z+j-1)\)\chi(\ga_\a)^n\\
&\ll \int_\R dk(w,z+t)\ll \int_\R|k'(u)|\,du\ll 1.
\end{align*}
It follows that $\what K$ is an $L^2$-kernel and thus defines a Hilbert-Schmidt operator.
Theorem \ref{thm4.6} is proven.
\hfill $\square$

For the next theorem let's shortly recall spectral theory of compact operators.
For a compact operator $T:H\to H$ every spectral value $\la\ne 0$ is an isolated point in the spectrum and it is a generalized eigenvalue where the generalized eigenspace
$$
E(\la)=\bigcup_{n=1}^\infty\ker(T-\la)^n
$$
is non-zero and finite-dimensional.
For $\la\ne 0$ in the spectrum of $T$ let 
$$
P(\la)=\frac1{2\pi i}\int_\eta(T-z)^{-1}dz
$$
where $\eta$ is a closed path in $\C$ which surrounds the eigenvalue $\la$ once and no other spectral value.
Then $P$ is a continuous projection with image $E(\la)$, called the \e{Riesz-projection} of the eigenvalue $\la$.
Let $N$ be the intersection of all $P(\la)$, $\la\in\sigma(T)$, $\la\ne 0$.
Then $N$ ist preserved by $T$ which is quasi-nilpotent on $N$.
The space $N$ is called the \e{nilpotence kernel} of $T$.
The direct sum
$$
N\oplus\bigoplus_{\la\ne 0}E(\la)
$$
is dense in $H$.
If $S$ is an operator commuting with $T$, it will preserve each of the spaces $E(\la)$ and $N$.

\begin{theorem}
[Spectral decomposition]
Let $\Delta_\cusp$ denote the restriction of the hyperbolic Laplacian $\Delta$ to the space of cusp forms.
For every $\la\in\C$ there exists $m\in\N_0$  such that
$$
\ker(\Delta_\cusp-\la)^m=\ker(\Delta_\cusp-\la)^{m+k}
$$
holds for every $k\in\N$.
Let $m(\la)$ denote the smallest such $m\in\N_0$ and let $H_\cusp(\la)$ denote the space $\ker(\Delta_\cusp-\la)^{m(\la)}$.
Then there is a sequence $\la_j\in\C$, tending to infinity, such that $H_\cusp(\la)=0$ unless $\la=\la_j$ for some $j$ and the direct sum
$$
N\oplus \bigoplus_{j=1}^\infty H_\cusp(\la_j)
$$
is dense in $L^2_{\cusp}(\Ga\bs\H,\chi)$, where $N$ is the intersection of all nilpotence kernels of the operators $R(f)$.
On the closed space $N$, every $R(f)$ has only the spectral value $0$.
For each even $f\in C_c^\infty(\R)$ the operator with kernel $k(z,w)=f(u(z,w))$ is a trace class operator on $L^2_\cusp(\Ga\bs\H,\chi)$.
\end{theorem}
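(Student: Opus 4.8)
The plan is to study the commutative family of invariant integral operators $R(f)$, for even $f\in C_c^\infty(\R)$, whose kernel is $k(z,w)=f(u(z,w))$, and to transport the spectral theory of compact operators recalled above to $\Delta_\cusp$. First I would observe that such a kernel, having compact support in $u$, satisfies $k(z,w)\ll(u(z,w)+1)^{-\sigma}$ for every $\sigma$; hence by Theorem \ref{thm4.6} its compact part $\what K$ is Hilbert--Schmidt on $L^2(F,\chi)$, and since on cusp forms $\what L$ agrees with $L$, each $R(f)$ restricts to a compact (indeed Hilbert--Schmidt) operator on $L^2_\cusp(\Ga\bs\H,\chi)$. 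Being given by point-pair invariant kernels, these operators commute with one another and with $\Delta$. The analytic input I would use is the spherical eigenvalue relation: if $\Delta\phi=\la\phi$ then $R(f)\phi=\what f(\la)\phi$, where $\what f$ is the Selberg--Harish-Chandra transform of $f$. This propagates to generalized eigenvectors by induction: since $R(f)$ commutes with $\Delta$, for $v\in\ker(\Delta-\la)^n$ the vector $(\Delta-\la)^{n-1}(R(f)-\what f(\la))v=(R(f)-\what f(\la))(\Delta-\la)^{n-1}v$ vanishes (its argument is a genuine eigenfunction), so $(R(f)-\what f(\la))$ maps $\ker(\Delta-\la)^n$ into $\ker(\Delta-\la)^{n-1}$, whence $(R(f)-\what f(\la))^n$ annihilates $\ker(\Delta-\la)^n$.

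From here the discreteness assertions follow. Fix $\la$ and choose $f$ with $\what f(\la)\ne 0$ (possible because $f\mapsto\what f(\la)$ is a nonzero functional for each fixed $\la$). By the previous paragraph every vector of $\bigcup_n\ker(\Delta_\cusp-\la)^n$ is a generalized eigenvector of the compact operator $R(f)$ for the nonzero eigenvalue $\what f(\la)$, hence lies in the corresponding finite-dimensional generalized eigenspace $E_f(\what f(\la))$. Consequently $\bigcup_n\ker(\Delta_\cusp-\la)^n$ is finite-dimensional, the ascending chain of kernels stabilizes, $m(\la)$ is well defined, and $H_\cusp(\la)\subseteq E_f(\what f(\la))$ is finite-dimensional. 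If infinitely many distinct $\la$ with $H_\cusp(\la)\ne 0$ had a finite accumulation point $\la^*$, then choosing $f$ with $\what f(\la^*)\ne 0$ the values $\what f(\la)$ would be infinitely many eigenvalues of $R(f)$ accumulating at $\what f(\la^*)\ne 0$, or else infinitely many would coincide and force $E_f(\what f(\la^*))$ to be infinite-dimensional; either alternative contradicts compactness. Hence the set of such $\la$ is discrete and, being discrete in $\C$, can be enumerated as a sequence $\la_j\to\infty$.

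For the density statement I would single out one good operator. Using that the Selberg transform realizes an isomorphism onto even $C_c^\infty$ data, the condition $\what{f_0}(\la_j)=0$ cuts out, for each $j$, a proper closed subspace of the Fr\'echet space of even test functions of a fixed support; by Baire's theorem a generic $f_0$ satisfies $\what{f_0}(\la_j)\ne 0$ for all $j$ simultaneously. For such an $f_0$ the inclusion $H_\cusp(\la_j)\subseteq E_{f_0}(\what{f_0}(\la_j))$ together with the fact that each finite-dimensional $E_{f_0}(\mu)$ is $\Delta$-invariant gives $E_{f_0}(\mu)=\bigoplus_{\la_j:\,\what{f_0}(\la_j)=\mu}H_\cusp(\la_j)$ for every $\mu\ne 0$, so that $\bigoplus_{\mu\ne0}E_{f_0}(\mu)=\bigoplus_j H_\cusp(\la_j)$. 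I would then identify the residual space: the nilpotence kernel $N_{f_0}$ contains no $\Delta$-generalized eigenvector (such a vector would lie in some $E_{f_0}(\mu)$ with $\mu\ne0$), and for any $f$ the operator $R(f)$ preserves $N_{f_0}$ and, having no nonzero eigenvalue there (a finite-dimensional $R(f)$-eigenspace inside $N_{f_0}$ would be $\Delta$-invariant and hence contain a $\Delta$-eigenvector), is quasi-nilpotent on it; thus $N_{f_0}\subseteq N_f$ for all $f$ and therefore $N_{f_0}=\bigcap_f N_f=N$. Applying the recalled density statement to the single compact operator $R(f_0)$ now yields that $N\oplus\bigoplus_j H_\cusp(\la_j)$ is dense, and the inclusion $N\subseteq N_f$ gives at once that every $R(f)$ has spectral value $0$ only on $N$.

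The remaining, and I think most routine, point is the trace-class assertion. Each $R(f)$ is only Hilbert--Schmidt from Theorem \ref{thm4.6}, so to upgrade to trace class I would factor: viewing $R(f)$ as convolution on $G=\PSL_2(\R)$ by the bi-$K$-invariant function attached to $f$ (with $K=\SO(2)$), the Dixmier--Malliavin theorem writes this function as a finite sum of convolution products of two bi-$K$-invariant functions in $C_c^\infty$, i.e. $f$ corresponds to $\sum_i f_i\ast g_i$ with $f_i,g_i$ even and compactly supported. Since invariant integral operators compose according to convolution and preserve $L^2_\cusp$, this gives $R(f)=\sum_i R(f_i)R(g_i)$ on $L^2_\cusp(\Ga\bs\H,\chi)$, a finite sum of products of Hilbert--Schmidt operators, hence trace class. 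The genuine obstacle throughout is the non-unitarity of $\chi$: because $\Delta_\cusp$ is not self-adjoint and the $R(f)$ are not normal, the spectral theorem is unavailable, the eigenvalues $\la_j$ may be complex, and completeness of the generalized eigenfunctions can fail, which is exactly why a residual summand $N$ must be carried along. The technical heart is therefore the passage through the commuting compact family---detecting every $\Delta$-eigenvalue as a nonzero eigenvalue of some $R(f)$, and pinning down $N=\bigcap_f N_f$ by the genericity argument above.
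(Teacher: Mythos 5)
Your proposal is correct and follows the same skeleton as the paper's proof: the family of operators $R(f)$ for even $f\in C_c^\infty(\R)$ is Hilbert--Schmidt on cusp forms by Theorem \ref{thm4.6} (compact support of $f$ gives $k\ll(u+1)^{-\sigma}$ for every $\sigma$), the family is commutative, the spectral decomposition of compact operators is transported to $\Delta_\cusp$, and trace class is obtained by the Dixmier--Malliavin factorization into convolution products of $K$-bi-invariant functions --- this last step is verbatim the paper's argument. Where you genuinely diverge is in how the common spectral decomposition and its compatibility with $\Delta$ are established. The paper simply invokes the simultaneous decomposition of a commutative algebra of compact operators (citing \cite{HA2}) and asserts, with a reference to \cite{Iwaniec}, that the resulting generalized eigenspaces are generalized eigenspaces of the Laplacian. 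You instead make this explicit: the Selberg--Harish-Chandra relation $R(f)\phi=\what f(\la)\phi$, propagated to generalized eigenvectors by the commutator identity, shows $H_\cusp(\la)\subseteq E_f(\what f(\la))$; discreteness of the $\la_j$ then falls out of compactness; and a Baire-category choice of a single generic $f_0$ with $\what{f_0}(\la_j)\ne 0$ for all $j$ lets you read off the whole decomposition, and the identification $N=N_{f_0}=\bigcap_fN_f$, from one operator rather than from the whole family. This buys a self-contained proof of exactly the steps the paper delegates to citations, at the cost of having to verify (as you implicitly do) that the finite-dimensional spaces $E_{f_0}(\mu)$ consist of smooth functions and are $\Delta$-invariant so that the Jordan decomposition of $\Delta$ on them makes sense; both routes lead to the same statement.
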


We believe that the space $N$ is zero, but we can't prove this statement.

\begin{proof}
Let $\C_c^\infty(\R)^\even$ be the set of all smooth functions $f$ of compact support which are even, i.e., satisfy $f(-x)=f(x)$.
For such $f$ the kernel $k(z,w)=f(u(z,w))$ is smooth satisfies the requirement of Theorem \ref{thm4.6}, thus defines a Hilbert-Schmidt operator on $L^2_\cusp(\Ga\bs\H,\chi)$.
The algebra of all such operators is commutative (Section 11.2 in \cite{HA2}).
So it has a common spectral decomposition as a direct sum of a nilpotence kernel and generalized eigenspaces, which are all finite-dimensional. These generalized eigenspaces are also generalized eigenspaces of the Laplace operator \cite{Iwaniec}.
As for the last statement about the trace class, note that so far we know that these operators are Hilbert-Schmidt.
The algebra of these operators coincides with the convolution algebra of $K$-bi-invariant functions in $C_c^\infty(G)$.
By the Theorem of Dixmier-Malliavin \cite{DixMal} we have that every $f\in C_c^\infty(G)$ can be written as a finite sum $f=\sum_{j=1}^n g_j*h_j$, where $g_j,h_j\in C_c^\infty(G)$.
If $f$ is $K$-bi-invariant, we can integrate over $K$ and assume the each $g_j$ and each $h_j$ is $K$-bi-invariant, too.
This means that the operator induced by $f$ is a finite sum of products of Hilbert-Schmidt operators, hence trace class.
\end{proof}

\section{Meromorphic continuation of the Eisenstein series}

Let $G_s(u)$ be the integral
$$
G_s(u)=\frac1{4\pi}\int_0^1(\xi(1-\xi))^{s-1}(\xi+u)^{-s}\,d\xi.
$$

\begin{proposition}\label{prop5.1}
The integral $G_s(u)$ converges absolutely for $\Re(s)>0$.
The function $G_s(u)$ is smooth and satisfies the differential equation
$$
\Delta+s(1-s))G_s(u)=0,
$$
where $\Delta$ is applied to either $z$ or $w$ in $u=u(z,w)$.
We have the following bounds
\begin{align*}
G_s(u)&=-\frac1{4\pi}\log(u)+O(1),&u\to 0,\\
G_s'(u)&=-\frac1{4\pi u}+O(1),&u\to 0,\\
G_s(u)&\ll u^{-\Re(s)}&u\to\infty.
\end{align*}
\end{proposition}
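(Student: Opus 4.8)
The plan is to establish the four assertions in turn, with the bulk of the effort going into the differential equation.

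\emph{Convergence and smoothness.} Fix $u>0$. Near the endpoints the integrand behaves like $\xi^{s-1}$ as $\xi\to 0$ and like $(1-\xi)^{s-1}$ as $\xi\to 1$, while $(\xi+u)^{-s}$ stays bounded away from $0$; hence $\int_0^1\big|(\xi(1-\xi))^{s-1}(\xi+u)^{-s}\big|\,d\xi<\infty$ exactly because $\Re(s)>0$, with a bound that is locally uniform in $(s,u)$. As the integrand is smooth in $u>0$, holomorphic in $s$, and the formally differentiated integrals satisfy the same locally uniform estimates, I would differentiate under the integral sign to conclude that $G_s(u)$ is smooth in $u$ and holomorphic in $s$.

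\emph{The differential equation (the main obstacle).} First I would pass from $\Delta$ to its radial form. From $\cosh(d(z,w))=1+\tfrac12 u$ one computes that, on functions of $u=u(z,w)$ alone,
$$\Delta=-\Big[u(u+4)\frac{d^2}{du^2}+(2u+4)\frac{d}{du}\Big],$$
so the claim reduces to the ordinary differential equation $\Delta G_s=s(1-s)G_s$ in $u$; equivalently, $[u(u+4)\partial_u^2+(2u+4)\partial_u]G_s=-s(1-s)G_s$. Differentiating under the integral and collecting powers of $\xi+u$,
$$[u(u+4)\partial_u^2+(2u+4)\partial_u](\xi+u)^{-s}=-s(1-s)(\xi+u)^{-s}+2s^2(2-\xi)(\xi+u)^{-s-1}+s(s+1)\xi(\xi-4)(\xi+u)^{-s-2}.$$
Integrated against $(\xi(1-\xi))^{s-1}$ the first term reproduces exactly $-s(1-s)G_s(u)$, so the whole identity comes down to showing that the last two terms integrate to zero. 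I would achieve this by exhibiting $(\xi(1-\xi))^{s-1}\big[2s^2(2-\xi)(\xi+u)^{-s-1}+s(s+1)\xi(\xi-4)(\xi+u)^{-s-2}\big]$ as a total $\xi$-derivative and integrating by parts; the boundary contributions at $\xi=0,1$ carry factors $\xi^{s}$ and $(1-\xi)^{s}$ and so vanish once $\Re(s)$ is taken large enough, after which the identity extends to all $\Re(s)>0$ by analytic continuation. Finding the correct primitive and tracking the range of $s$ in which the boundary terms disappear is the delicate point. One could instead recognize $G_s(u)=\tfrac1{4\pi}B(s,s)\,u^{-s}\,{}_2F_1(s,s;2s;-1/u)$ by Euler's integral and quote the hypergeometric equation, but the integration by parts is more self-contained.

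\emph{The asymptotics.} For $u\to\infty$ the bound is immediate: for $u\ge 1$ one has $\xi+u\ge u$, whence $|G_s(u)|\le\tfrac1{4\pi}u^{-\Re(s)}\int_0^1(\xi(1-\xi))^{\Re(s)-1}\,d\xi\ll u^{-\Re(s)}$. For $u\to 0$ I would split at a fixed $\delta\in(0,1)$. On $[\delta,1]$ the integrand remains bounded as $u\to 0$ and contributes $O(1)$. On $[0,\delta]$ I write $(\xi(1-\xi))^{s-1}=\xi^{s-1}(1+O(\xi))$ and substitute $\xi=u\tau$, turning the singular part into $\int_0^{\delta/u}\tau^{s-1}(\tau+1)^{-s}\,d\tau$; since this integrand is $\sim\tau^{-1}$ at infinity, the truncated integral equals $-\log u+O(1)$, giving $G_s(u)=-\tfrac1{4\pi}\log u+O(1)$. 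The derivative estimate is obtained in the same way from $G_s'(u)=-\tfrac{s}{4\pi}\int_0^1(\xi(1-\xi))^{s-1}(\xi+u)^{-s-1}\,d\xi$: the substitution $\xi=u\tau$ now produces $u^{-1}\int_0^{\delta/u}\tau^{s-1}(\tau+1)^{-s-1}\,d\tau$, and the convergent beta integral $\int_0^\infty\tau^{s-1}(1+\tau)^{-s-1}\,d\tau=1/s$ yields the leading term $-\tfrac1{4\pi u}+O(1)$.
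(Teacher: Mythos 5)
The paper's own ``proof'' is a one-line citation of Lemma 1.7 in Iwaniec's book, and your plan is essentially to reconstruct Iwaniec's actual argument, which is the right idea. But the step you explicitly defer --- exhibiting the leftover terms as a total $\xi$-derivative --- is not merely delicate: with the conventions of this paper it is impossible, because the identity you are trying to prove is false as stated. Your radial operator $u(u+4)\partial_u^2+(2u+4)\partial_u$ is correct for this paper's $u(z,w)=|z-w|^2/(\Im z\,\Im w)$, and your expansion of $[u(u+4)\partial_u^2+(2u+4)\partial_u](\xi+u)^{-s}$ is also correct; the problem is that the two leftover terms do \emph{not} integrate to zero against $(\xi(1-\xi))^{s-1}$. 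A direct check at $s=1$: the formula gives $G_1(u)=\tfrac1{4\pi}\log(1+1/u)$, and one computes $[u(u+4)\partial_u^2+(2u+4)\partial_u]G_1(u)=\tfrac{3}{4\pi(1+u)^2}\neq 0=s(1-s)G_1$. The source of the trouble is a normalization mismatch in the paper itself: Iwaniec defines $u(z,w)=|z-w|^2/(4\Im z\,\Im w)$, one quarter of the $u$ used here, and the integral formula for $G_s$ has been copied verbatim without adjusting. If you replace $(\xi+u)^{-s}$ by $(\xi+u/4)^{-s}$ (equivalently, work with Iwaniec's $u$), the radial operator becomes $u(u+1)\partial_u^2+(2u+1)\partial_u$, the leftover terms become exactly $s\,\frac{d}{d\xi}\bigl[(\xi(1-\xi))^{s}(\xi+u)^{-s-1}\bigr]$, and the boundary contributions vanish already for $\Re(s)>0$, so no ``take $\Re(s)$ large and continue'' step is needed. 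Without this correction your integration by parts cannot be completed, because no primitive exists.

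Two smaller points. First, the sign: with this paper's positive Laplacian one has $\Delta y^s=s(1-s)y^s$, so the Green function should satisfy $(\Delta-s(1-s))G_s=0$; your restatement $\Delta G_s=s(1-s)G_s$ is the internally consistent one, and the displayed equation in the proposition again carries over Iwaniec's opposite sign convention for $\Delta$. Second, in the asymptotics for $G_s'$ your error analysis discards the contribution of $(1-\xi)^{s-1}-1=O(\xi)$ on $[0,\delta]$; after the substitution $\xi=u\tau$ this produces $\int_0^{\delta/u}\tau^{\Re(s)}(1+\tau)^{-\Re(s)-1}\,d\tau\asymp\log(1/u)$, so for general $s$ the remainder is $O(\log(1/u))$ rather than $O(1)$ (test $s=2$ directly). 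The bounds for $G_s(u)$ itself as $u\to0$ and $u\to\infty$, and the convergence and smoothness arguments, are fine.
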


\begin{proof}
This is Lemma 1.7 in \cite{Iwaniec}.
\end{proof}

\begin{theorem}\label{thm5.2}
For $\Re(s)>0$ let $R_s$ be the integral operator
$$
R_sf(z)=-\int_\H G_s(u(z,w))f(w)\,dw.
$$
If $f:\H\to V$ is smooth and together with its first and second order derivatives (under the Lie algebra) satisfies $\norm f\ll y^{\sigma}+y^{-\sigma}$ for some $\sigma>0$ and if $\Re(s)>1+\sigma$, then
$$
(\Delta+s(1-s))R_sf=f=R_s(\Delta+s(1-s))f.
$$
\end{theorem}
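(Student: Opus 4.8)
The plan is to recognize $G_s$ as (up to sign) the resolvent kernel, i.e.\ the Green's function, of $\Delta+s(1-s)$ on $\H$: the two asserted identities say precisely that $R_s$ is a two-sided inverse of $\Delta+s(1-s)$ on the admissible class of functions. The cleanest route is to prove the single distributional statement that $G_s$ is a fundamental solution,
$$
-(\Delta+s(1-s))_z\,G_s(u(z,w))=\delta_w(z),
$$
from which both identities follow by symmetry. The only inputs are from Proposition~\ref{prop5.1}: off the diagonal $G_s(u(z,w))$ is annihilated by $\Delta+s(1-s)$ in either variable, while on the diagonal it carries the singularity $G_s(u)=-\frac{1}{4\pi}\log u+O(1)$ and $G_s'(u)=-\frac{1}{4\pi u}+O(1)$, and at infinity it decays as $G_s(u)\ll u^{-\Re(s)}$. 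The logarithmic singularity is what produces the Dirac mass and hence the right-hand side $f$.

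First I would check that $R_sf$ is well defined. Splitting the integral into a neighbourhood of the diagonal and its complement, the bound $|G_s|\ll|\log u|$ is integrable against the invariant measure near $z=w$, and far from the diagonal the decay $G_s(u)\ll u^{-\Re(s)}$ together with $u(z,w)\sim e^{d(z,w)}$ beats the growth $\norm{f(w)}\ll\Im(w)^\sigma+\Im(w)^{-\sigma}$ exactly because $\Re(s)>1+\sigma$; this is the same exponent count that controls the boundary term at infinity below. Since by hypothesis the first and second derivatives of $f$ obey the same growth bound, $(\Delta+s(1-s))f$ again lies in the admissible class (the Laplacian being a second-order invariant operator), so that $R_s(\Delta+s(1-s))f$ is likewise well defined.

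The core computation is Green's identity for the self-adjoint hyperbolic Laplacian. I would excise a small geodesic disc $B_\eps(z)$ and integrate over the annulus $B_T(z)\setminus B_\eps(z)$, then let $T\to\infty$ and $\eps\to 0$. Green's second identity transfers $\Delta$ between $f$ and $G_s$; the interior integrand vanishes since $(\Delta+s(1-s))G_s=0$ off the diagonal, and the $s(1-s)$-term over the shrinking disc is negligible because $G_s$ is integrable there. On the outer sphere $\partial B_T(z)$, where $u(z,w)=2(\cosh T-1)\sim e^T$, the decay $G_s,\partial_nG_s\ll e^{-\Re(s)T}$, the growth $f,\partial_nf\ll e^{\sigma T}$, and the length $2\pi\sinh T\sim e^T$ of the sphere give a contribution $\ll e^{(1+\sigma-\Re(s))T}\to 0$ — precisely where $\Re(s)>1+\sigma$ enters, and where the first-derivative hypothesis supplies $\partial_nf$. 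For the inner sphere I pass to the Euclidean picture via $\Delta=-y^2(\partial_x^2+\partial_y^2)$, under which a small geodesic disc about $z$ is a Euclidean disc and $G_s\sim-\frac{1}{2\pi}\log\rho$ in the Euclidean radius $\rho$ about $z$. Then $\int_{\partial B_\eps}G_s\,\partial_nf=O(\eps\log\eps)\to 0$, while $\partial_nG_s\sim(2\pi\eps)^{-1}$ integrated against the circumference $2\pi\eps$ forces $\int_{\partial B_\eps}f\,\partial_nG_s\to f(z)$ — the constant $\frac{1}{4\pi}$ in $G_s$ being tuned exactly so that this normalization equals $1$. I expect this local diagonal analysis to be the main obstacle: isolating the singular boundary contribution, verifying that the $O(1)$ remainders in the asymptotics of $G_s$ and $G_s'$ do not survive in the limit, and confirming that the coefficient is precisely one.

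With the fundamental-solution identity in hand, both equalities follow. For $(\Delta+s(1-s))R_sf=f$ the operator acts in $z$ outside the integral, and elliptic regularity of $\Delta+s(1-s)$ together with the disc-removal computation justifies moving it inside and reading off $f(z)$ from the diagonal term. For $R_s(\Delta+s(1-s))f=f$ the operator starts on $f$; using the symmetry $\Delta_zG_s(u(z,w))=\Delta_wG_s(u(z,w))$ from Proposition~\ref{prop5.1}, Green's identity transfers it onto $G_s$ and the identical estimates—interior vanishing, negligible sphere at infinity, and the $f(z)$ contribution from $\partial B_\eps$—give the claim. The two directions are thus the same computation read with the differential operator sitting on opposite sides, and the growth hypotheses on $f$ through second order guarantee that every interchange of limit, derivative and integral is legitimate.
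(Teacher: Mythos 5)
Your proposal is correct in substance but follows a genuinely different route from the paper. The paper does not reprove the fundamental-solution property at all: it cites Iwaniec (Theorem 1.17 and Lemma 1.18 of \cite{Iwaniec}) for the case of bounded $f$ with bounded derivatives, and then handles the growth class $\norm f\ll y^{\sigma}+y^{-\sigma}$ by a soft localization argument --- choosing a smooth partition of unity $1=\sum_j u_j$ with compact supports, writing $R_sf=\sum_j R_s(u_jf)$, applying the known compactly supported case termwise, and resumming; the hypothesis $\Re(s)>1+\sigma$ enters there only through absolute convergence of $R_sf$ and the legitimacy of exchanging $\Delta$ with the sum. You instead prove the resolvent identity from scratch: Green's second identity on an annulus $B_T(z)\setminus B_\eps(z)$, the interior vanishing from $(\Delta+s(1-s))G_s=0$ off the diagonal, the outer boundary term killed by the exponent count $e^{(1+\sigma-\Re(s))T}\to 0$, and the inner boundary producing $f(z)$ from the logarithmic singularity. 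Your version is self-contained and makes transparent exactly where $\Re(s)>1+\sigma$ and the derivative hypotheses on $f$ are used, at the cost of redoing the local analysis near the diagonal that the citation already covers; the one small supplement you would need to make explicit is a decay bound for $G_s'(u)$ as $u\to\infty$ (Proposition \ref{prop5.1} states bounds on $G_s'$ only near $u=0$), which follows by differentiating the integral defining $G_s$ and is needed for your outer boundary term $\partial_nG_s$. Neither route has a gap; they are complementary, and yours is closer to what Iwaniec actually does in the cited lemmas.
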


\begin{proof}
In \cite{Iwaniec}, Theorem 1.17. and Lemma 1.18 we find the proof if $f$ is bounded with bounded derivatives.
For the general situation note that the growth condition assures that the integral $R_sf$ converges absolutely and defines a smooth function. Let $1=\sum_{j=1}^\infty u_j$ be a smooth partition of unity in $\H$ with compact supports.
Then
\begin{align*}
(\Delta+(s(1-s))R_sf&=\sum_{j=1}^\infty (\Delta+(s(1-s))R_s(u_jf)=\sum_{j=1}^\infty u_jf=f
\end{align*}
and analogously for the other order.
\end{proof}

Note that the singularity of $G_s(u)$ at $u=0$ does not depend on $s$. It will therefore vanish in differences of the form $G_s-G_a$ for $a\ne s$.

\begin{theorem}
Let $\a$ be a cusp of $\Ga$.
The Eisenstein series $E_\a(z,s,\chi)$ extends to a function $\H\times\C\to \End(V)$, which is smooth in $z$ and  meromorphic in the complex variable $s$.
Let $\varphi_{\a,\b}(s)$ the function in the constant term of the Fourier expansion in Theorem \ref{thm3.6} and let 
$$
\Phi(s)=\(\varphi_{\a,\b}(s)\)_{\a,\b\in\CC(\Ga)},
$$
where $\CC(\Ga)$ is the set of inequivalent cusps of $\Ga$.
Then $\det\Phi(s)\ne 0$, the function $\Phi(s)$ extends to a meromorphic function on $\C$ and the vector $\CE(z,s,\chi)=\(E_\a(z,s,\chi)\)_\a$ satisfies the functional equation
$$
\CE(z,s,\chi)=\Phi(s)\CE(z,1-s,\chi).
$$
\end{theorem}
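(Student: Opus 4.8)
The plan is to follow the Colin de Verdi\`ere method: realize $E_\a(z,s,\chi)$ as the resolvent $R_s$ applied to an entire, compactly supported datum, and then continue $R_s$ by analytic Fredholm theory. First I fix a cutoff $\psi\in C^\infty(\R_{>0})$ with $\psi\equiv 1$ for $y\ge 2Y$ and $\psi\equiv 0$ for $y\le Y$, with $Y$ large enough that the cuspidal sectors are disjoint and embed in $F$. For each cusp $\a$ I build the smooth automorphic section $h_\a(\cdot,s)$ which in the sector of a cusp $\b$ (via $\sigma_\b$) equals $\delta_{\a,\b}\psi(y)y^sP_\a$ and is zero on the compact core; this is well defined because $\psi$ vanishes below $Y$, because $\Im$ is $\Ga_\a$-invariant, and because $P_\a$ commutes with $\chi(\Ga_\a)$ ($\chi$ being unitary at the cusps). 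Since $\Delta y^s=s(1-s)y^s$, the function $q_\a(\cdot,s):=(\Delta+s(s-1))h_\a(\cdot,s)$ (here $\Delta+s(s-1)$ is the operator annihilating $y^s$, inverted by $R_s$ in Theorem \ref{thm5.2}) is supported in the transition region $Y\le y\le 2Y$, hence compactly supported on $\Ga\bs\H$, bounded with bounded derivatives, and entire in $s$. For $\Re(s)$ large the difference $E_\a-h_\a$ lies in $L^2(\Ga\bs\H,\chi)$ with controlled growth (its constant terms are the $\varphi_{\a,\b}(s)y^{1-s}$ of Theorem \ref{thm3.6}, by Proposition \ref{prop3.7}), and Theorem \ref{thm5.2} yields the basic identity
$$E_\a(z,s,\chi)=h_\a(z,s)-R_s q_\a(z,s).$$
This already exhibits smoothness in $z$, and the scattering coefficients $\varphi_{\a,\b}(s)$ appear as the $y^{1-s}$-part of the constant term of $-R_sq_\a$ in cusp $\b$.

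The central task, and the main obstacle, is to continue $s\mapsto R_s q_\a$ meromorphically past $\Re(s)=\tfrac12$, where classically one invokes self-adjointness of $\Delta$ and the spectral theorem. Here $\chi$ is non-unitary, so $\Delta$ is \emph{not} self-adjoint on $L^2(\Ga\bs\H,\chi)$ and its spectrum may be genuinely complex; I therefore replace the spectral theorem by the Fredholm/Riesz theory of Section 4. I observe that the automorphic Green kernel $\sum_{\ga\in\Ga}G_s(u(z,\ga w))\chi(\ga)$ has exactly the form of the kernels studied in Section 4 with $k=G_s$, and that $G_s(u)\ll u^{-\Re(s)}$ (Proposition \ref{prop5.1}); hence for $\Re(s)>\sigma_0$ its compact part is Hilbert--Schmidt (Theorem \ref{thm4.6}), while the principal parts carry the $y^s$- and $y^{1-s}$-asymptotics. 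Fixing a base point $a$ with $\Re(a)$ large and using the resolvent identity, I set up a Fredholm equation $(\Id-A(s))u=v(s)$ in which $A(s)$ is a holomorphic family of compact (Hilbert--Schmidt) operators built from this compact part, modified by finite-rank terms encoding the cuspidal $y^s/y^{1-s}$ data, and invertible for $\Re(s)$ large. Analytic Fredholm theory then provides a meromorphic $(\Id-A(s))^{-1}$ on all of $\C$, continuing $R_sq_\a$ and hence $E_\a(z,s,\chi)$, and simultaneously continuing each $\varphi_{\a,\b}(s)$ and so $\Phi(s)$. The delicate points are the uniform holomorphy and compactness of $A(s)$ as $s$ crosses the former spectral line, and the bookkeeping that isolates the continuous-spectrum contributions as finite-rank modifications so that only a genuinely compact operator is inverted.

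Finally I establish the functional equation and $\det\Phi(s)\ne0$ together by a constant-term/uniqueness argument. For generic $s$ I form $D_\a(z,s)=E_\a(z,s,\chi)-\sum_{\mathfrak c}\varphi_{\a,\mathfrak c}(s)E_{\mathfrak c}(z,1-s,\chi)$; each term is a $\Delta$-eigenfunction for the eigenvalue $s(1-s)=(1-s)s$, so $D_\a$ is too. Using the continued Fourier expansion of Theorem \ref{thm3.6}, the $y^{1-s}$-contributions to the constant term in every cusp $\b$ cancel identically, while the $y^s$-contributions combine to the coefficient $\delta_{\a,\b}P_\a-\sum_{\mathfrak c}\varphi_{\a,\mathfrak c}(s)\varphi_{\mathfrak c,\b}(1-s)$, i.e. to the matrix $\Id-\Phi(s)\Phi(1-s)$. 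A Maass--Selberg/uniqueness argument shows that an eigenfunction whose constant terms are pure multiples of $y^s$ with no $y^{1-s}$ part must have those multiples vanish, forcing $\Phi(s)\Phi(1-s)=\Id$ on $\bigoplus_\a V^{\chi(\Ga_\a)}$; in particular $\det\Phi(s)\ne0$ as a meromorphic function. With the constant terms gone, $D_\a$ is a cusp form that is an eigenfunction of $\Delta_\cusp$ for the eigenvalue $s(1-s)$; since by the spectral decomposition of Section 4 the cusp-form eigenvalues form a discrete set, for generic $s$ the value $s(1-s)$ is not among them, whence $D_\a\equiv0$, giving $\CE(z,s,\chi)=\Phi(s)\CE(z,1-s,\chi)$ by meromorphic continuation. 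Here too non-unitarity must be watched in the Maass--Selberg step: lacking self-adjointness one has no positivity, and instead argues through meromorphy and the discreteness of the generalized cusp-form spectrum.
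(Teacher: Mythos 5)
Your continuation argument and the paper's differ in packaging but coincide at the decisive point. The paper follows Iwaniec's Chapter 6 setup (the Fredholm equation $g=f+\la Kg$ for the automorphic Green kernel) rather than your Colin de Verdi\`ere truncation $E_\a=h_\a-R_sq_\a$; the one issue it isolates as genuinely new is that Iwaniec's Fredholm \emph{determinant} expansion breaks down for $\End(V)$-valued kernels, because Hadamard's inequality is a pointwise scalar estimate. Its fix is exactly the tool you invoke: the kernel is bounded on the finite-volume set $F\times F$, hence Hilbert--Schmidt, so $\la\mapsto(I-\la K)^{-1}$ is meromorphic by analytic Fredholm theory, with smoothness of the resolvent kernel recovered from the ideal property of smoothing pseudodifferential operators (you would instead get smoothness from elliptic regularity applied to $R_sq_\a$, which is fine). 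What you leave vague --- the ``finite-rank modifications encoding the cuspidal $y^s/y^{1-s}$ data'' --- is precisely the bookkeeping the paper outsources to Iwaniec, so this part of your proposal is an acceptable, essentially equivalent route.

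The genuine gap is in your functional-equation step. You obtain $\Phi(s)\Phi(1-s)=\Id$ (hence $\det\Phi\ne0$ and the vanishing of the constant term of $D_\a$) from ``a Maass--Selberg/uniqueness argument'' forcing the $y^s$-coefficients to vanish. That argument is, in every classical treatment, Green's formula plus \emph{positivity} of the resulting identity, i.e.\ self-adjointness of $\Delta$ on the $L^2$-space --- exactly what is missing for non-unitary $\chi$. You acknowledge this, but your proposed substitute (discreteness of the generalized cusp spectrum) only applies \emph{after} $D_\a$ is known to be a cusp form, i.e.\ after the constant term has already been killed; it cannot produce $\Phi(s)\Phi(1-s)=\Id$ in the first place. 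The paper avoids the issue entirely: it forms $F(s,z)=\CE(z,s,\chi)-\Phi(s)\CE(z,1-s,\chi)$, places it in $L^2(\Ga\bs\H,\chi)$ (for $\Re(s)<\tfrac12$ even a surviving $y^s$ constant term is square-integrable, so no positivity is needed), notes $\Delta F=s(1-s)F$, and applies the bounded operator $G_{a,b}=G_a-G_b$, whose diagonal singularities cancel by Proposition \ref{prop5.1}: if $F\not\equiv0$, then as $s$ varies $G_{a,b}$ acquires the unbounded family of eigenvalues $\frac1{s(1-s)-a(1-a)}-\frac1{s(1-s)-b(1-b)}$, contradicting boundedness. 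The relation $\Phi(s)\Phi(1-s)=\Id$, and with it $\det\Phi\ne0$, then falls out by applying the functional equation twice rather than being an input. You should either restructure your last step along these lines or supply a positivity-free proof of the Maass--Selberg vanishing you assert.
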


\begin{proof}
It turns out, that the proof of the untwisted case, i.e., with $\chi=1$, as in Chapter 6 of \cite{Iwaniec}, essentially transfers to the twisted case.
However, the argument has to be changed significantly in one point.
Iwaniec uses Fredholm's theory of integral operators, in particular a power series expansion of the resolvent (Appendix A.4 in \cite{Iwaniec}). The latter cannot be used for Integral operators in vector bundles, as it is based on a point-wise estimate of integral kernels using Hadamard's inequality (see formula (A.14) in \cite{Iwaniec}).
This formula has to be replaced with the following argument from the theory of pseudo differential operators.

Let $\la\in\C$ and $F\subset \H$ the given fundamental domain.
We consider a given kernel $K:F\times F\to \End(V)$.
By abuse of notation we write $K$ for the induced operator as well.
Assuming the operator norm $\norm{K(z,w)}_\op$ to be bounded, we conclude that the operator $K$ is Hilbert-Schmidt, as $F$ has finite volume. Therefore, $K$ is compact and the map $\C\to\CB(L^2(\Ga\bs\H,\chi))$, $\la\mapsto (I-\la K)^{-1}$ is holomorphic 
outside a countable set of poles which can only accumulate at $\infty$.
Let $T_\la(z,w)$ denote the kernel of $K(I-\la K)^{-1}$. If the kernel $K$ is smooth, then $K$ is a smoothing operator, i.e., a pseudo differential operator of symbol class $S^\infty$ \cite{Shu} and as the smoothing operators form an ideal, the operator $T_{\la}$ is a smoothing operator whenever defined, i.e., as long as $\frac1\la$ is not an eigenvalue of $K$. Therefore, the kernel $T_\la(z,w)$ is holomorphic in $\la$ and smooth in $z,w$.
Now, as in \cite{Iwaniec}, for given $f$ one considers solutions of the Fredholm equation 
$$
g=f+\la Kg,
$$
i.e., $g=(I-\la K)^{-1}f$, or
$$
g=f+\la K(I-\la K)^{-1} f.
$$
The point about this last equation is, that  the right hand side is a meromorphic function in $\la$, defined on the entire complex plane except for countably many poles. As in \cite{Iwaniec}, it is found that after suitable modifications, the Eisenstein series $E_\a(z,s,\chi)$ takes the place of $g$ and the analytic continuation in $\la$, which is a polynomial in $s$, yields the analytic continuation of the Eisenstein series.

For the functional equation we argue as follows. Let $F(s,z)$ be the difference
$$
F(s,z)=\CE(z,s,\chi)-\Phi(s)\CE(z,1-s,\chi).
$$
The Fourier expansion shows that $F(s,z)$ is rapidly decreasing at every cusp, hence it is in $L^2(\Ga\bs\H,\chi)$.
Further we have $\Delta F(s,z)=s(1-s)F(s,z)$.
Let $G_{a,b}=G_a-G_b$, then it follows that
$$
G_{a,b}F(s,z)=\(\frac1{s(1-s)-a(1-a)}-\frac1{s(1-s)-b(1-b)}\)F(s,z).
$$
The singularity on the diagonal of $G_a$ vanishes in the difference $G_a-G_b$  by Proposition \ref{prop5.1}.
If $\Re(a)$ and $\Re(b)$ are sufficiently large, by Proposition \ref{prop2.9} and Proposition \ref{prop5.1}, the sum $\sum_{\ga\in\Ga}G_{a,b}(z,\ga w)$ converges locally uniformly and defines a continuous integral operator on $L^2(\Ga\bs\H,\chi)$.
Now assume that $F(s,z)$ is not zero. Then, being analytic in $s$, it can only vanish for $s$ in a discrete set. Therefore, by the above equation, the operator $G_{a,b}$ has an unbounded set of eigenvalues which contradicts its boundedness.
It follows that $F(s,z)$ must be zero.
\end{proof}

\begin{bibdiv} \begin{biblist}

\bib{HA2}{book}{
   author={Deitmar, Anton},
   author={Echterhoff, Siegfried},
   title={Principles of harmonic analysis},
   series={Universitext},
   edition={2},
   publisher={Springer, Cham},
   date={2014},
   pages={xiv+332},
   isbn={978-3-319-05791-0},
   isbn={978-3-319-05792-7},
   doi={10.1007/978-3-319-05792-7},
}

\bib{DM}{article}{
   author={Deitmar, Anton},
   author={Monheim, Frank},
   title={A trace formula for non-unitary representations of a uniform
   lattice},
   journal={Math. Z.},
   volume={284},
   date={2016},
   number={3-4},
   pages={1199--1210},
   issn={0025-5874},
   review={\MR{3563274}},
   doi={10.1007/s00209-016-1695-9},
}

\bib{DixMal}{article}{
   author={Dixmier, Jacques},
   author={Malliavin, Paul},
   title={Factorisations de fonctions et de vecteurs ind\'efiniment
   diff\'erentiables},
   language={French, with English summary},
   journal={Bull. Sci. Math. (2)},
   volume={102},
   date={1978},
   number={4},
   pages={307--330},
   issn={0007-4497},
}

\bib{Iwaniec}{book}{
   author={Iwaniec, Henryk},
   title={Spectral methods of automorphic forms},
   series={Graduate Studies in Mathematics},
   volume={53},
   edition={2},
   publisher={American Mathematical Society, Providence, RI; Revista
   Matem\'atica Iberoamericana, Madrid},
   date={2002},
   pages={xii+220},
   isbn={0-8218-3160-7},
   doi={10.1090/gsm/053},
}

\bib{Katok}{book}{
   author={Katok, Svetlana},
   title={Fuchsian groups},
   series={Chicago Lectures in Mathematics},
   publisher={University of Chicago Press, Chicago, IL},
   date={1992},
   pages={x+175},
   isbn={0-226-42582-7},
   isbn={0-226-42583-5},
}

\bib{mueller}{article}{
   author={M{\"u}ller, Werner},
   title={A Selberg trace formula for non-unitary twists},
   journal={Int. Math. Res. Not. IMRN},
   date={2011},
   number={9},
   pages={2068--2109},
   issn={1073-7928},
   doi={10.1093/imrn/rnq146},
}

\bib{Shu}{book}{
   author={Shubin, M. A.},
   title={Pseudodifferential operators and spectral theory},
   edition={2},
   note={Translated from the 1978 Russian original by Stig I. Andersson},
   publisher={Springer-Verlag, Berlin},
   date={2001},
   pages={xii+288},
   isbn={3-540-41195-X},
   doi={10.1007/978-3-642-56579-3},
}

\end{biblist} \end{bibdiv}

{\small Mathematisches Institut\\
Auf der Morgenstelle 10\\
72076 T\"ubingen\\
Germany\\
\tt deitmar@uni-tuebingen.de\\
frank.monheim@mailbox.org}

\today

\end{document}